\documentclass{ieeeaccess-arxiv}
\usepackage[numbers]{natbib}
\usepackage{amsmath,amssymb,amsfonts}
\usepackage[noend]{algorithmic}
\usepackage{graphicx}
\usepackage{textcomp}
\usepackage{enumitem}

\usepackage{hyperref}       
\usepackage{url}            
\usepackage{booktabs}       
\usepackage{array}
\usepackage{nicefrac}       
\usepackage{algorithm}
\usepackage{makecell}

\DeclareFontFamily{U}{mathx}{\hyphenchar\font45}
\DeclareFontShape{U}{mathx}{m}{n}{<-> mathx10}{}
\DeclareSymbolFont{mathx}{U}{mathx}{m}{n}
\DeclareMathAccent{\widebar}{0}{mathx}{"73}

\usepackage{xcolor}

\newcommand{\todoan}[1]{}

\usepackage{agafonov}

\newtheorem{definition}{Definition}
\newtheorem{assumption}{Assumption}
\newtheorem{lemma}{Lemma}
\newtheorem{theorem}{Theorem}
\newtheorem{innercustomthm}{Theorem}
\newenvironment{customthm}[1]
  {\renewcommand\theinnercustomthm{#1}\innercustomthm}
  {\endinnercustomthm}
\newtheorem{innercustomlemma}{Lemma}
\newenvironment{customlemma}[1]
  {\renewcommand\theinnercustomlemma{#1}\innercustomlemma}
{\endinnercustomlemma}

\newtheorem{remark}{Remark}

\newcommand{\opnorm}[1]{\|#1\|_{\mathrm{op}}}

\newcommand{\hx}{\hat{x}}
\newcommand{\hu}{\hat{u}}
\newcommand{\hv}{\hat{v}}
\newcommand{\hg}{\hat{g}}
\newcommand{\hH}{\hat{H}}
\newcommand{\hphi}{\hat{\phi}}

\newcommand{\home}{\hat{\omega}}
\newcommand{\hnu}{\hat{\nu}}
\newcommand{\hmu}{\hat{\mu}}
\newcommand{\err}{\mathrm{err}}
\newcommand{\berr}{\widebar{\mathrm{err}}}
\newcommand{\bx}{\bar{x}}
\newcommand{\bv}{\bar{v}}
\newcommand{\bgg}{\bar{g}}
\newcommand{\bH}{\bar{H}}
\newcommand{\bR}{\bar{R}}
\newcommand{\mW}{{W}}
\newcommand{\mX}{{X}}
\newcommand{\mU}{{U}}
\newcommand{\mG}{{G}}

\newcommand{\mA}{{A}}
\newcommand{\hDe}{\hat \Delta}

\newcommand{\Lavg}{\bar{L}}
\newcommand{\Lmax}{L^{\max}}
\newcommand{\muavg}{\bar{\mu}}
\newcommand{\mumin}{\hmu}

\newcommand{\econs}{\hat{\err}}

\newcommand{\Done}[2]{\Delta_{1 \vert #1, #2}}
\newcommand{\Dtwo}[2]{\Delta_{2 \vert #1, #2}}
\newcommand{\Dtwoo}[1]{\Delta_{2 \vert #1}}

\newcommand{\hDex}[2]{\hDe_{#1, #2}}
\newcommand{\hDeg}[2]{\hDe_{g \vert #1, #2}}
\newcommand{\hDeH}[2]{\hDe_{H \vert #1, #2}}

\newcommand{\dimd}{d}
\newcommand{\numIter}{N}
\newcommand{\Tcons}[2]{T_{#1, #2}}
\newcommand{\Tx}[1]{\Tcons{x}{#1}}
\newcommand{\Tg}[1]{\Tcons{g}{#1}}
\newcommand{\THh}[1]{\Tcons{H\!}{#1}}

\newcommand{\Tz}[2]{\Tcons{#1}{#2}}
\newcommand{\Tgz}[2]{T_{g \vert #1, #2}}
\newcommand{\THz}[2]{T_{H \vert #1, #2}}

\newcommand{\numMach}{m}

\usepackage{bm}
\makeatletter
\AtBeginDocument{\DeclareMathVersion{bold}
\SetSymbolFont{operators}{bold}{T1}{times}{b}{n}
\SetSymbolFont{NewLetters}{bold}{T1}{times}{b}{it}
\SetMathAlphabet{\mathrm}{bold}{T1}{times}{b}{n}
\SetMathAlphabet{\mathit}{bold}{T1}{times}{b}{it}
\SetMathAlphabet{\mathbf}{bold}{T1}{times}{b}{n}
\SetMathAlphabet{\mathtt}{bold}{OT1}{pcr}{b}{n}
\SetSymbolFont{symbols}{bold}{OMS}{cmsy}{b}{n}
\renewcommand\boldmath{\@nomath\boldmath\mathversion{bold}}}
\makeatother

\def\BibTeX{{\rm B\kern-.05em{\sc i\kern-.025em b}\kern-.08em
    T\kern-.1667em\lower.7ex\hbox{E}\kern-.125emX}}

\begin{document}

\title{Decentralized Inexact Cubic Newton Method with Consensus Procedure}
\author{\uppercase{Artem Agafonov}\authorrefmark{1, 2, 3},
\uppercase{Anton Novitskii}\authorrefmark{3, 4}, \uppercase{Alexander Rogozin}\authorrefmark{3}, \uppercase{Yury Sokolov}\authorrefmark{3}, \uppercase{Dmitry Kamzolov}\authorrefmark{5}, \uppercase{Alexander G. Dyakonov}\authorrefmark{2}, \uppercase{Martin Tak\'a\v{c}}\authorrefmark{1}, \uppercase{Alexander Gasnikov}\authorrefmark{3,4,6}.
}

\address[1]{Mohamed bin Zayed University of Artificial Intelligence (MBZUAI), Abu Dhabi, UAE}
\address[2]{AI VK, Moscow, Russia}
\address[3]{Moscow Independent Research Institute of Artificial Intelligence, Moscow, Russia}
\address[4]{Trusted AI Research Center, RAS, Moscow, Russia}
\address[5]{Independent Researcher}
\address[6]{Innopolis University, Kazan, Russia}

\tfootnote{This work was supported by a grant, provided by the Ministry of Economic Development of the Russian Federation (agreement  dated June 20, 2025 No. 139-15-2025-011, identifier 000000C313925P4G0002).}

\markboth
{Agafonov \headeretal: Decentralized Inexact Cubic Newton method with Consensus Procedure}
{Agafonov \headeretal: Decentralized Inexact Cubic Newton method with Consensus Procedure}

\corresp{Corresponding author: Artem Agafonov (e-mail: artem.agafonov@mbzuai.ac.ae).}


\begin{abstract}
    Distributed optimization is widely used in large-scale and privacy-preserving machine learning, where each agent stores a local objective and communicates only with its neighbors in a connected network. We study decentralized second-order optimization and focus on consensus procedures that approximately average local iterates, gradients, and Hessians through neighbor-to-neighbor communications. We propose a general Decentralized Cubic Newton method for convex optimization under $L_1$-smoothness of gradients and $L_2$-Lipschitz continuity of Hessians, and develop a theory that accurately tracks the inaccuracies caused by consensus and by disagreement between local iterates. Under these assumptions, the method matches the iteration complexity of the exact Cubic Newton method and requires only additional polylogarithmic communication-round overhead to reach the necessary consensus accuracy. We further propose an Accelerated Decentralized Cubic Newton method for strongly convex objectives and show that it matches the iteration complexity of the exact Accelerated Cubic Newton method, again with only additional polylogarithmic communication-round overhead. Finally, although the general method requires exchanging full $\dimd \times \dimd$ Hessian matrices, we show how it can be implemented for generalized linear models by transmitting only vectors, making the approach substantially more practical in high dimensions.
\end{abstract}

\titlepgskip=-21pt

\maketitle

\section{Introduction}


We consider decentralized optimization over a time-varying communication network represented by a sequence of undirected graphs with $\numMach$ nodes.
Each node~$i\in\{1,\dots,\numMach\}$ has access to a local objective function $f_i:\R^\dimd\to\R$.
The global optimization problem has the form
\begin{equation}
    \label{eq:problem_intro}
    \min_{x\in\R^\dimd}~ f(x) \eqdef \tfrac{1}{\numMach}\tsum_{i=1}^\numMach~f_i(x).
\end{equation}
Distributed optimization problems arise in federated learning \cite{kairouz2021advances}, control of power systems \cite{necoara2011parallel,necoara2014distributed}, resource allocation \cite{doan2017distributed,nedic2018improved} and other topics in multi-agent systems \cite{ren2006consensus}. The topic of distributed optimization emerged in seminal works \cite{tsitsiklis1984problems,bertsekas1989parallel}, form \eqref{eq:problem_intro} was formulated in a series of papers \cite{nedic2009distributed,nedic2009saddle}.

\textbf{First order methods}. Decentralized optimization problems can be solved by different techniques. First-order methods use only gradient information about local functions, i.e. these methods perform local computations of gradients $\nabla f_i(x)$ and function values $f_i(x)$. Therefore, such algorithms interleave local optimization steps and communications with their neighbors. First-order algorithms include several variants of distributed gradient descent \cite{nedic2009distributed,shi2015extra,koloskova2020unified,li2020decentralized}, gradient tracking \cite{nedic2017achieving,pu2020push,pu2021distributed,li2024accelerated,alghunaim2020decentralized} and primal-dual methods \cite{scaman2017optimal,kovalev2020optimal,kovalev2021adom,kovalev2021lower}. The theoretical study of first-order algorithms was summed up by deriving lower complexity bounds in the spirit of classical convex optimization theory \cite{nemirovski1983problem,nesterov2004introduction}, which was performed in \cite{scaman2017optimal,kovalev2021lower}, and building corresponding algorithms that achieve the lower bounds.

\textbf{Proximal methods}. A different approach to decentralized optimization is based on ADMM-type primal-dual proximal methods \cite{boyd2011distributed}. ADMM is applied when local functions have simple structure that allows cheap computation of proximal operator $\prox_{f}^{\gamma}(x) = \argmin_y \cbraces{f(y) + 1/(2\gamma) \norm{y - x}_2^2}$. ADMM-type methods were studied in \cite{boyd2011distributed,chang2016proximal,falsone2020tracking,wu2022distributed,gong2023decentralized}. Their main limitation is prox-friendly local loss functions, which is the case i.e. for simple quadratic loss functions. In other words, ADMM algorithm is not typically applied to functions of general form.

\textbf{Implementing a distributed second order method is nontrivial}. We now turn to second-order methods. They are algorithms that can compute not only function values and gradients, but also second order information, that are Hessians $\nabla^2 f_{i}(x)$. Developing a distributed Newton method even in a centralized setting (i.e. with full aggregation) turns out to be nontrivial. Consider two quadratic functions $f_1(x) = (x + 1)^2$ and $f_2(x) = (2x - 1)^2$. Let a distributed Newton method make two local steps with a following aggregation, i.e. $x_1^{k+1/2} = x^k - (\nabla^2 f_1(x^k))^{-1} \nabla f_1(x^k)$, $x_2^{k+1/2} = x^k - (\nabla^2 f_2(x^k))^{-1} \nabla f_2(x^k)$ and $x^{k+1} = (x_1^{k+1/2} + x_2^{k+1/2}) / 2$. However, such a method will jump out of the global minimum. Indeed, local minima are $x_1^* = -1$ and $x_2^* = 1/2$ and global minimum is $x^* = 1/5$. Starting from $x^0 = x^*$ we get to $x^1 = -1/4$. This simple example shows that a straightforward implementation of the Newton method is not convergent.

\textbf{Non-distributed Newton}. A standard Newton iteration $x^{k+1} = x^k - \gamma_k (\nabla^2 f(x^k))^{-1} \nabla f(x^k)$ with unit stepsize $\gamma_k = 1$ is known to achieve local quadratic convergence, but is non-convergent if started from an arbitrary point \cite{nesterov2018lectures}. Global convergence is achieved via line search for $\gamma_k$. In the modern literature, regularized versions of Newton method are mostly used. Cubic regularization of Newton method was proposed in \cite{nesterov2006cubic}. Gradient regularization of Hessian was studied in \cite{doikov2024gradient,mishchenko2023regularized}. In \cite{jiang2026beyond}, a trust-region approach combined with gradient regularization was proposed. Under Lipschitz continuity of Hessian (i.e. $\norm{\nabla^2 f(y) - \nabla^2 f(x)}\leq \bar L_2\norm{y - x}$), all mentioned regularized versions of Newton method achieve $\cO(\sqrt{\bar L_2 D^3/\e})$ convergence rate, where $D$ characterizes the distance to solution from starting point. The same convergence rate holds for damped Newton method, as shown in \cite{hanzely2022damped}.

\textbf{Centralized Newton}. We first discuss centralized second order methods. Exchanging Hessians between the nodes is the communication bottleneck of distributed methods. In the case when local functions are loss functions of generalized linear models \cite{NL2021,wang2018giant}, Hessian exchange can be made significantly cheaper. A different approach is applying a random compression operator to the Hessians before exchange \cite{safaryan2021fednl,Newton-3PC}. Under a data similarity assumption ($\norm{\nabla^2 f(x) - \nabla^2 f_i(x)}_2\leq \beta$ for some constant $\beta > 0$), a distributed Cubic Newton method was proposed in \cite{agafonov2021accelerated}. It is also possible to replace a Newton update with a low-rank update \cite{agafonov2025flecs}, which reduces the amount of exchanged information.

\textbf{Decentralized Newton}. Considering decentralized second order methods, the results in the current literature are not well developed. Newton method with assumption on closeness of local summands was derived in \cite{daneshmand2021newton}, employing a gradient tracking technique over gradients without any Hessian exchange. While this approach elegantly avoids communicating Hessians, it relies  on the second-order $\beta$-similarity assumption. 
In contrast, our proposed method relies only on standard Lipschitz continuity, making it robust to data heterogeneity.
Decentralized quasi-Newton methods were proposed in \cite{eisen2017decentralized,zhang2023variance}. Newton method with gradient tracking technique achieving linear convergence rate was proposed in \cite{zhang2021newton}. An approach proposed in \cite{uribe2020distributed} approximately computes a Newton step via a decentralized procedure. The outer method is accelerated Newton and the inner method computes the Newton step with accuracy $\delta$. The best estimate given in the paper is $\delta = \mathcal{O}(\e^3)$, resulting in a total communication complexity of $\mathcal{O}\left((\bar L_2 D^3/\e)^{1/3} \sqrt{\chi \bar L_1/\e^3} \ln(1/\e)\right)$. In comparison with \cite{uribe2020distributed}, our method makes local Newton steps and performs a decentralized aggregation afterwards. As a results, we have a better communication complexity in comparison to \cite{uribe2020distributed}. Moreover, lazy Hessians can be applied to distributed second-order optimization \cite{zhang2024communication}. In \cite{mokhtari2016network}, a second-order method with $\mathcal{O}(\bar L_2 / (\mu\e)\ln(1/\e))$ complexity is proposed. In lazy Hessian approach, local Hessians are communicated and updated once in several local iterations. Hessian exchange can also be cheaper in case of specific structure of local functions \cite{NL2021}.


\textbf{Our contribution}.

We develop a unified (strongly) convex theory of consensus-based decentralized cubic Newton methods that explicitly accounts for errors due to consensus and disagreement between local iterates. This theory is established without additional similarity assumptions such as $\beta$-similarity between local objectives, but relies on standard $L_1$,~$L_2$ smoothness assumptions (Assumptions~\ref{as:L1},~\ref{as:L2}).

\begin{table}[h!]
\centering
\begin{tabular}{|p{1.5cm}|p{6.5cm}|}
\hline
Method & Complexity \\
\hline
Di-Reg-INA \cite{daneshmand2021newton} &
Convex: 
$\cO\cbraces{\cbraces{\sqrt{\frac{\bar L_2 D^3}{\e}} + \beta \frac{\bar L_2 D^3}{\e}} \sqrt\chi \log\cbraces{\frac{1}{\e}}}$ \\
& Str. Conv: 
$\cO\cbraces{\sqrt{\frac{\bar L_2 D}{\mu}}\cbraces{1 + m^{1/4}\sqrt{\frac{\beta}{\mu}} + \frac{\beta}{\mu} \sqrt\chi \ln\cbraces{\frac1\e}}}$ \\
\hline
Distr. cubic solver \cite{uribe2020distributed} &
Convex: $\cO\cbraces{\cbraces{\frac{\bar L_2^{1/3} \bar L_1^{1/2} D}{\e^{11/6}}} \sqrt\chi\ln\cbraces{\frac1\e}}$ \\
& Str. Convex: -- \\
\hline
This paper &
Convex: $\cO \ls \sqrt{\tfrac{\Lavg_2 D^3}{\e}} \tfrac{\tau}{\lambda}\log \tfrac{1}{\e} \rs$ \\
& Str. Convex: $\cO\cbraces{\left(\tfrac{\bar L_2 \bR}{\muavg}\right)^{1/3}
    \tfrac{\tau}{\lambda}\log^2 \tfrac{f(\bx^0)-f(x^*)}{\e}}$ \\
\hline
\end{tabular}
\caption{Decentralized Newton complexity bounds for static graphs. $\overline L_2$ denotes the average Lipschitz constant of $\nabla^2 f_i(x)$; $\overline \mu$ is the average strong convexity constant defined in Assumption \ref{as:strcnvxty}, $\beta$ is the second-order similarity parameter used in \cite{daneshmand2021newton} ($\|\nabla^2 f_i(x) - \nabla^2 f(x)\|\leq \beta$), $D$ is the distance to solution defined in \eqref{eq:lebesgue_set_diameter} and $\overline R$ is the maximum deviation from solution over algorithm trajectory defined in \eqref{eq:boundnes}. Graph condition number is denoted $\chi$ that equals $\chi = 1/\lambda$ for static graphs (see Remark~\ref{rem:chebyshev}), where $\lambda$ is mixing matrix eigengap defined in Assumption \ref{assum:mixing_matrix_sequence}.}
\label{tab:decentr_newton_complexity}
\end{table}

Our contributions are summarized as follows.

$\bullet$ We propose Decentralized Cubic Newton for the convex setting. The method achieves the total communication complexity to reach accuracy $\e$
\begin{equation*}
    \cO \ls \sqrt{\tfrac{\Lavg_2 D^3}{\e}} \tfrac{\tau}{\lambda}\log \tfrac{1}{\e} \rs,
\end{equation*}
where diameter $D$ and average $L_2$-Lipschitz constant $\Lavg_2$ are defined in~\eqref{eq:lebesgue_set_diameter},~\eqref{eq:l2_avg} respectively, and $\tau,\lambda$ are defined in Assumption~\ref{assum:mixing_matrix_sequence}. This rate matches the convergence of exact Cubic Newton method~\cite{nesterov2006cubic} up to logarithmic factor.

$\bullet$ We propose decentralized cubic Newton method and its accelerated version for the strongly convex setting. The non-accelerated method achieves the total communication complexity to reach accuracy $\e$ of
\begin{equation*}
    \cO(1)\max\lb
        1,\,
        \left(\tfrac{\Lavg_2 D^3}{\muavg}\right)^{1/2}
    \rb
    \tfrac{\tau}{\lambda}\log^2 \tfrac{f(\bx^0)-f(x^*)}{\e},
\end{equation*}
while the accelerated variant improves this to
\begin{equation*}
    \cO(1)\max\lb
        1,\,
        \left(\tfrac{L\bR}{\muavg}\right)^{1/3}
    \rb
    \tfrac{\tau}{\lambda}\log^2 \tfrac{f(\bx^0)-f(x^*)}{\e},
\end{equation*}
where $\muavg$ is the averaged strong convexity constant, and $\bR$ is defined in~\eqref{eq:boundnes}. This rate matches the convergence of exact Accelerated Cubic Newton method~\cite{nesterov2008accelerating} up to logarithmic factor. To the best of our knowledge, accelerated Newton for decentralized optimization previously was proposed only in \cite{uribe2020distributed}. However, their approach includes the solution of a non-strongly convex cubic problem on each iteration, which results in a $\cO(1/\e^{11/6} \ln(1/\e))$ communication complexity. Our approach uses a consensus subroutine and the total communication complexity is $\cO(1/\e^{1/3} \ln^2(1/\e))$.

$\bullet$ In the general form, the method requires exchanging full $\dimd \times \dimd$ Hessian matrices. To address this bottleneck, we discuss how the same consensus-based scheme can be implemented for generalized linear models using only vector transmissions, which makes the method substantially more practical in high-dimensional problems.

\textbf{Organization.} The rest of the paper is organized as follows. Section~\ref{sec:setup} introduces the problem setup and notation. Section~\ref{sec:method} presents the Decentralized Cubic Newton method together with its convergence guarantees. Section~\ref{sec:acc_method} introduces the accelerated variant (Accelerated Decentralized Cubic Newton) and establishes its convergence guarantees. Section~\ref{sec:implementation} discusses implementation details for generalized linear models. All proofs are deferred to Appendix~\ref{app:proofs}.

\section{Problem setup and notation}
\label{sec:setup}

We consider decentralized optimization over a deterministic time-varying undirected communication network with $m$ nodes. Individual graphs in the sequence need not be connected; connectivity is imposed through the contraction condition in Assumption~\ref{assum:mixing_matrix_sequence}.
Each node~$i\in\{1,\dots,\numMach\}$ has access to a local objective function $f_i:\R^\dimd\to\R$. The node can compute function values $f_i(x_i)$, gradient values $\nabla f_i(x_i)$ and Hessian values $\nabla^2 f_i(x_i)$.
The global optimization objective is given in \eqref{eq:problem_intro}. The notation used in the paper (for non-accelerated Decentralized Cubic Newton method) is summarized in Table~\ref{tab:basic_notation}. Throughout this paper, functions are assumed to be either convex or strongly convex.
\begin{assumption}
    \label{as:cnvxty}
    We assume that each function $f_i$ is convex. Then the global objective $f(x) = \tfrac{1}{m}\tsum_{i=1}^\numMach f_i(x)$ is convex as well. 
\end{assumption}
\begin{assumption}
    \label{as:strcnvxty}
    We assume that each function $f_i$ is $\mu_i$-strongly convex with $\mu_i > 0$. Then the global objective $f(x) = \tfrac{1}{m}\tsum_{i=1}^\numMach f_i(x)$ is $\muavg$-strongly convex with 
$\muavg = \tfrac{1}{\numMach}\tsum_{i=1}^\numMach \mu_i$. 
\end{assumption}


\textbf{Averaging and disagreement.}
For a collection of local vectors $\{u_i\}_{i=1}^\numMach \subset \R^\dimd$, we denote their average by $\bar u \eqdef \tfrac{1}{\numMach}\tsum_{i=1}^\numMach u_i$. 
To write the network updates compactly, we associate with any such collection a matrix $\mU \in \R^{\numMach \times \dimd}$ whose $i$-th row is $u_i^\top$. In particular, for local iterates $\{x_i^k\}_{i=1}^\numMach$ we define their average $\bx^k$ and the corresponding matrix $\mX^k \in \R^{\numMach \times \dimd}$:
\begin{equation}
    \label{eq:mean_x}
    \bx^k \eqdef \tfrac{1}{\numMach}\tsum_{i=1}^\numMach x_i^k, \quad \mX^k \eqdef \begin{bmatrix} (x_1^k)^\top \\ \vdots \\ (x_\numMach^k)^\top \end{bmatrix}, \quad 
    \bar{X}^k = \begin{pmatrix}
        (\bar{x}^k)^\top \\ \vdots \\ (\bar{x}^k)^\top
    \end{pmatrix}.
\end{equation}

\textbf{Consensus routine.}   
We assume that nodes are connected via a deterministic time-varying network represented by a sequence of graphs $\{\mathcal{G}^k = (\mathcal{V}, \mathcal{E}^k)\}_{k=0}^\infty$ with a common vertex set $\mathcal{V} = \{1, \dots, \numMach\}$. With each graph, we associate a mixing matrix $\mW^k \in \R^{\numMach \times \numMach}$. 

    
\begin{assumption}
\label{assum:mixing_matrix_sequence}
    The sequence of mixing matrices $\{\mW^k\}_{k=1}^\infty$ satisfies the following properties:
    \begin{enumerate}
        \item (Network compatibility) For each $k \ge 1$, $[\mW^k]_{ij} = 0$ if $(i, j) \notin \mathcal{E}^k$ and $i \neq j$.
        \item (Double stochasticity) For each $k \ge 1$, we have $[W^k]_{ij}\geq 0$ for any $i, j = 1, \ldots, \numMach$, $\mW^k \mathbf{1}_\numMach= \mathbf{1}_\numMach$, $\mathbf{1}_\numMach^\top \mW^k = \mathbf{1}_\numMach^\top$, where $\mathbf{1}_\numMach$ is a vector of all ones.
        \item (Contraction property) There exist an integer $\tau \ge 1$ and $\lambda \in (0, 1)$ such that for all $k \ge \tau - 1$ and any matrix $\mU \in \R^{\numMach \times d}$, it holds that

        \begin{equation}
            \left\| \mW_{\tau}^k \mU - \bar{\mU} \right\|_F \le (1 - \lambda)\|\mU - \bar{\mU}\|_F,
        \end{equation}
        where $\mW_{\tau}^k \eqdef \mW^k \mW^{k-1} \dots \mW^{k-\tau+1}$ and $\bar{\mU} \eqdef \frac{1}{\numMach}\mathbf{1}_\numMach\mathbf{1}_\numMach^\top \mU$ (a matrix where each row is the average vector $(\bar{u}^k)^\top$).
    \end{enumerate}
\end{assumption}

The contraction property in Assumption \ref{assum:mixing_matrix_sequence} generalizes several assumptions in the literature.
\begin{itemize}
    \item Time-static connected graph: $\mW^k = \mW$. In this classical case, the network is connected at every iteration, meaning $\tau = 1$. The one-step contraction factor is given by $\lambda = 1 - \sigma_2(\mW)$, where $\sigma_2(\mW)$ denotes the second largest singular value of $\mW$.

    \item Sequence of connected graphs: every $\mathcal{G}_k$ is connected. Here we also have $\tau = 1$, and the parameter is defined by the worst-case spectral gap: $\lambda = 1 - \sup_{k \ge 0} \sigma_2(\mW^k)$.
    \item $\tau$-connected graph sequence (i.e. for every $k \ge 0$ graph $\mathcal{G}_\tau^k = (\mathcal{V}, \mathcal{E}^k \cup \mathcal{E}^{k+1} \cup \dots \cup \mathcal{E}^{k+\tau-1})$ is connected \cite{nedic2017achieving}.
    For $\tau$-connected graph sequences it holds $1 - \lambda = \sup_{k \ge 0} \sigma_{\max}(\mW_\tau^k - \frac{1}{\numMach} \mathbf{1}_\numMach \mathbf{1}_\numMach^\top)$. In this general setting, the graphs $\mathcal{G}^k$ in the sequence are not required to be connected individually; connectivity is only required over the union of edges across any $\tau$ consecutive rounds.
\end{itemize}

\begin{remark}\label{rem:chebyshev}
    For time-static graphs, mixing matrix $W$ can be replaced by a matrix polynomial $P_K(W)$ of degree $K = \lceil\sqrt\chi\rceil$. Taking $P_K(W)$ as a Chebyshev polynomial with correct coefficients enables to make it well-conditioned, i.e. $\chi = 1/(1 - \sigma_2(P_K(W))) = \cO(1)$ \cite{scaman2017optimal,salim2022optimal}. Thus, one communication round will be replaced by $\cO(\sqrt\chi)$ communication rounds, but the effective condition number will be reduced to $\cO(1)$. As a result, for static graphs a communication complexity factor $\tau/\lambda = 1/\lambda = \chi$ in Table \ref{tab:decentr_newton_complexity} reduces to $\cO(\sqrt\chi)$. The only difference is to replace Consensus procedure \eqref{eq:consensus} with accelerated consensus (see i.e. Algorithm 2 in \cite{scaman2017optimal}).
\end{remark}

During every single communication round, the agents exchange information according to the rule $\mU^{k+1} = \mW^k \mU^k$. In particular, the contraction property holds for $\tau$-connected graphs with Metropolis weights choice for $\mW^k$, i.e.
\begin{equation*}
    [\mW^k]_{ij} = 
    \begin{cases}
        1/(1 + \max\{q_i^k, q_j^k\}) & \text{if } (i, j) \in \mathcal{E}^k, \\
        0 & \text{if } (i, j) \notin \mathcal{E}^k, \\
        1 - \tsum\limits_{(i,m) \in \mathcal{E}^k} [\mW^k]_{im} & \text{if } i = j,
    \end{cases}
\end{equation*}
where $q_i^k$ denotes the degree of node $i$ in graph $\mathcal{G}^k$.

The contraction property above ensures that repeated application of the mixing matrices drives local variables to their average. 
Accordingly, in the convergence analysis we model consensus as an abstract routine and capture its effect via suitable accuracy bounds introduced below. 
The contraction property will later be used to relate these bounds to the number of communication rounds.

\textbf{Consensus errors.}
Given local input vectors $\{u_i\}_{i=1}^\numMach$, we denote by 
\begin{equation}\label{eq:consensus}
    \hu_i \eqdef \mathrm{Consensus}(\{u_j\}_{j=1}^\numMach, T) \in \R^\dimd
\end{equation}
the output of $T$ consensus communication rounds on node $i$ (the node number $i$ is known from left-hand side). Equivalently, in matrix form, letting $\mU \in \R^{\numMach \times \dimd}$ be the matrix
with rows $u_i^\top$, the output of consensus can be written as
\begin{equation*}
    \hat{\mU} = \mW^k \mW^{k-1} \dots \mW^{k-T+1} \mU.
\end{equation*}

Given a sequence of local iterates $\{x_i^k\}_{k \ge 0}$, we denote by $\hx_i^k$ the result of applying consensus to $\{x_j^k\}_{j=1}^\numMach$ on node $i$. We assume that the consensus error is bounded as
\begin{equation}
\label{eq:cons_bounds_x}
    \max_{i\in[\numMach]} \|\hx^k_i - \bx^k\| \le \hDex{\hx}{k}.
\end{equation}
We use the notation $\hDe_{u,k}$ to indicate the quantity (e.g., $x$, $\hx$) and the iteration index at which the consensus error is evaluated. Such bounds can be ensured by performing a sufficient number of communication rounds, due to the contraction property in Assumption~\ref{assum:mixing_matrix_sequence}.

Similarly, applying consensus to local gradients $\{\nabla f_j(x_j^k)\}_{j=1}^\numMach$ 
and Hessians $\{\nabla^2 f_j(x_j^k)\}_{j=1}^\numMach$, we denote the outputs on node $i$ by 
$\hg_i(x_i^k)$ and $\hH_i(x_i^k)$, respectively,
where the argument indicates the points at which local derivatives are evaluated. For the Hessians, the input matrix $\mU \in \R^{\numMach \times \dimd^2}$ is formed such that each row is a flattened local Hessian. We also assume that:
\begin{subequations}
    \label{eq:cons_bounds_derivatives}
    \begin{align}
        \max_{i\in[\numMach]} \|\hg_i (x_i^k) - \tfrac{1}{\numMach} \tsum_{j=1}^\numMach \nabla f_j(x_j^k)\| & \le    \hDeg{x}{k}  
        \label{eq:cons_bounds_gradient}\\
        \max_{i\in[\numMach]} \opnorm{\hH_i (x_i^k) - \tfrac{1}{\numMach} \tsum_{j=1}^\numMach \nabla^2 f_j(x_j^k)} & \le \hDeH{x}{k}  .
        \label{eq:cons_bounds_Hessian}
    \end{align}
\end{subequations}

Since consensus is applied to local derivatives evaluated at possibly different points (e.g., $x_i^k$ or $\hx_i^k$), we explicitly indicate these evaluation points in the notation. 
In particular, expressions such as $\hg_i(x_i^k)$ and $\hg_i(\hx_i^k)$ denote the outputs of consensus applied to 
$\{\nabla f_j(x_j^k)\}_{j=1}^{\numMach}$ and $\{\nabla f_j(\hx_j^k)\}_{j=1}^{\numMach}$, respectively. In the second case we denote consensus bound error by $\hDeg{\hx}{k}$.

Here $\|\cdot\|$ and $\|\cdot\|_{\mathrm{op}}$ denote the Euclidean and operator norms, respectively.

\textbf{Smoothness assumptions.}
We assume both first- and second-order Lipschitz properties (with possibly different constants per node).

\begin{assumption}[Local $L_1$-smoothness]
\label{as:L1}
Each $f_i$ is differentiable and its gradient is $L_{1,i}$-Lipschitz:
\begin{equation}
    \label{eq:L1}
    \|\nabla f_i(x) - \nabla f_i(y)\| \le L_{1,i}\|x-y\| \qquad \forall x,y\in\R^\dimd.
\end{equation}
\end{assumption}

\begin{assumption}[Local $L_2$-Lipschitz Hessian]
\label{as:L2}
Each $f_i$ is twice differentiable and its Hessian is $L_{2,i}$-Lipschitz:
\begin{equation}
    \label{eq:L2}
    \opnorm{\nabla^2 f_i(x) - \nabla^2 f_i(y)} \le L_{2,i}\|x-y\| \qquad \forall x,y\in\R^\dimd.
\end{equation}
\end{assumption}

Let us introduce second-order Taylor polynomial of local objectives $f_i$
\begin{equation*}
    \phi_i (y; x) = f_i(x) + \langle \nabla f_i(x), y - x \rangle + \tfrac{1}{2} \langle \nabla^2 f_i(x)(y - x), (y - x) \rangle.
\end{equation*}
Assumption \ref{as:L2} allows to control the quality of the approximation of the objective and its gradient by its Taylor polynomial~\cite{nesterov2018lectures}: 
\begin{equation*}
    |f_i(y) - \phi_i(y; x)| \le \tfrac{L_{2, i}}{6}\|y - x\|^{3}, x, y \in  \mathbb{R}^\dimd.
\end{equation*}
\begin{equation*}
    \|\nabla f_i(y) - \nabla \phi_i(y; x)\| \le \tfrac{L_{2, i}}{2}\|y - x\|^{2}, x, y \in  \mathbb{R}^\dimd.
\end{equation*}
Let us define global constants
\begin{equation}
    \label{eq:l2_avg}
    \Lavg_1 \eqdef \tfrac{1}{\numMach}\tsum_{i=1}^\numMach L_{1,i},
    \quad
    \Lavg_2 \eqdef \tfrac{1}{\numMach}\tsum_{i=1}^\numMach L_{2,i}.
\end{equation}
Then, the global objective $f$ has an $\Lavg_1$-Lipschitz gradient
\begin{equation*}
    \begin{aligned}
        \|\nabla f(x)-\nabla f(y)\|
        \le & \tfrac{1}{\numMach}\tsum_{i=1}^\numMach \opnorm{\nabla f_i(x)-\nabla f_i(y)} \\
        \le & ~\Lavg_1\|x-y\|.
    \end{aligned}
\end{equation*}
and $\Lavg_2$-Lipschitz Hessian 
\begin{equation*}
    \begin{aligned}
        \opnorm{\nabla^2 f(x)-\nabla^2 f(y)} 
        \le & \tfrac{1}{\numMach}\tsum_{i=1}^\numMach \opnorm{\nabla^2 f_i(x)-\nabla^2 f_i(y)} \\
        \le & ~\Lavg_2\|x-y\|.
    \end{aligned}
\end{equation*}
Let us introduce second-order Taylor polynomial of global objective $f$
\begin{equation*}
    \phi (y; x) = f(x) + \langle \nabla f(x), y - x \rangle + \tfrac{1}{2} \langle \nabla^2 f(x)(y - x), (y - x) \rangle,
\end{equation*}
which satisfies
\begin{equation}
    \label{eq:taylor_bound_global}
    |f(y) - \phi(y; x)| \le \tfrac{\Lavg_2}{6}\|y - x\|^{3}, x, y \in  \mathbb{R}^\dimd.
\end{equation}
\begin{equation}
    \label{eq:taylor_grad_bound_global}
    \|\nabla f(y) - \nabla \phi(y; x)\| \le \tfrac{\Lavg_2}{2}\|y - x\|^{2}, x, y \in  \mathbb{R}^\dimd.
\end{equation}






Table~\ref{tab:basic_notation} summarizes the notation used in the non-accelerated method.

\begin{table}[t]
\caption{Core notation for the basic decentralized cubic Newton method.}
\label{tab:basic_notation}
\centering
\scriptsize
\renewcommand{\arraystretch}{1.2}
\begin{tabular}{@{} >{\raggedright\arraybackslash}p{0.25\linewidth} >{\raggedright\arraybackslash}p{0.7\linewidth} @{}}
\toprule
\multicolumn{2}{@{}l}{\textbf{Problem, network, and global constants}} \\
\midrule
\quad $m$, $d$ & number of nodes, dimension \\
\quad $f_i$, $f$ & local objective, global objective~\eqref{eq:problem_intro} \\
\quad $x^*$ & global minimizer \\
\midrule
\multicolumn{2}{@{}l}{\textbf{Smoothness and strong convexity parameters}} \\
\midrule
\quad $L_{p,i}$, $\bar{L}_{p}$ & local and global $p$-th order Lipschitz constants~\eqref{eq:l2_avg} \\
\quad $\mu_i,~\bar{\mu},~\hat{\mu}$ & strong convexity parameters \\
\midrule
\multicolumn{2}{@{}l}{\textbf{Consensus errors and algorithmic parameters}} \\
\midrule
\quad $\hat{\Delta}_{\hat{x},k}$ & iterate consensus error~\eqref{eq:cons_bounds_x} \\
\quad $\hat{\Delta}_{g|\hat{x},k}$, $\hat{\Delta}_{H|\hat{x},k}$ & gradient and Hessian consensus errors at $\hat{x}_i^k$~\eqref{eq:cons_bounds_gradient},~\eqref{eq:cons_bounds_Hessian} \\
\quad $\Delta_{1|\hat{x},k}$, $\Delta_{2|\hat{x},k}$ & aggregated consensus errors~\eqref{eq:agg_deltas} \\
\quad $\delta_{1,k}$, $\delta_{2,k}$, $L$ & regularization parameters in $\hat{\omega}_i^k(x;y)$ \\
\quad $\Tx{k}$, $\Tg{k}$, $\THh{k}$ & numbers of consensus rounds for iterates, gradients, and Hessians in Algorithm~\ref{alg:basic} \\
\midrule
\multicolumn{2}{@{}l}{\textbf{Iterates, averages, and local models}} \\
\midrule
\quad $x_i^k$, $\bar{x}^k$, $\hat{x}_i^k$ & local iterate, averaged iterate~\eqref{eq:mean_x}, and consensus iterate \\
\quad $\hat{g}_i(\hat{x}_i^k)$, $\hat{H}_i(\hat{x}_i^k)$ & consensus gradient and Hessian at $\hat{x}_i^k$ \\
\quad $\phi_i(y;x)$, $\phi(y;x)$ & exact local and global Taylor models \\
\quad $\hat{\phi}_i^k(x;y)$, $\hat{\omega}_i^k(x;y)$ & inexact local Taylor model~\eqref{eq:inexact_taylor_2ord} and local cubic model~\eqref{eq:model_2ord} \\
\bottomrule
\end{tabular}
\end{table}

\section{Decentralized Cubic Newton}
\label{sec:method}

In this section, we propose a decentralized variant of the Cubic Regularized Newton method.

\textbf{Local model of the objective.}
We begin by introducing the local model used by each node to perform the update. This model is constructed using inexact local estimates of the first- and second-order derivatives obtained via the consensus procedure.

\begin{definition}[Local inexact cubic model]
\label{def:model}
Given the local point $\hx_i^k$ and local inexact derivatives $\hg_i(\hx_i^k),\hH_i(\hx_i^k)$, define the local smoothed model of the objective
\begin{equation}
\label{eq:model_2ord}
    \begin{aligned}
        \home^{k}_i(x; \hx^k_i)
        \eqdef&
        \la \hg_i(\hx_i^k), x-\hx_i^k\ra  
        + \tfrac12\la \hH_i(\hx_i^k)(x-\hx_i^k), x-\hx_i^k\ra \\
        & + \tfrac{\delta_{1,k}}{2\gamma} 
        + \ls\tfrac{\gamma\delta_{1,k} + \delta_{2,k}}{2}\rs\|x-\hx_i^k\|^2
        + \tfrac{L}{6}\|x-\hx_i^k\|^3
    \end{aligned}
\end{equation}
where $\delta_{1,k},\delta_{2,k}\ge 0$ are regularization parameters and $\gamma > 0$ is a smoothing parameter.
\end{definition}

Compared to standard Taylor models (e.g., $\phi(x; y)$ or the inexact tensor model~\cite[(15)]{agafonov2023inexact}), the model~\eqref{eq:model_2ord} does not include the function value term.
This constant does not affect the minimizer in~\eqref{eq:step} and is therefore omitted from the local model. It will be introduced only in the analysis when needed.

\begin{algorithm}
  \caption{Decentralized Cubic Newton}\label{alg:basic}
  \begin{algorithmic}[1]
    \STATE \textbf{Input:} initial parameters on each node $\{x_i^0\}_{i=1}^\numMach$, total number of steps $\numIter$, sequence of regularization parameters $\{\delta_{1,k}\}_{k=0}^\numIter$, $\{\delta_{2,k}\}_{k=0}^\numIter$, regularization parameter $L > 0$, smoothing parameter $\gamma > 0$, sequence of number of consensus steps $\{\Tx{k}\}_{k=0}^\numIter$, $\{\Tg{k}\}_{k=0}^\numIter$, $\{\THh{k}\}_{k=0}^\numIter$.
    \FOR{$k \in [0, \numIter]$ in parallel on each machine $i \in [\numMach]$} 
    \STATE Perform consensus on parameters:
        \begin{equation}
        \label{eq:algo_cons_x}
            \hx_i^k \eqdef \mathrm{Consensus}(\{x_j^k\}_{j=1}^\numMach, \Tx{k}).
        \end{equation}
    \STATE  Compute local derivatives $\nabla f_i(\hx_i^k)$ and $\nabla^2 f_i(\hx_i^k)$.
    \STATE Perform consensus on derivatives:
        \begin{equation}
            \label{eq:consensus_derivatives}
            \begin{gathered}
                \hg_i(\hx_i^k) \eqdef \mathrm{Consensus}(\{\nabla f_j(\hx_j^k)\}_{j=1}^\numMach, \Tg{k}),\\
                \hH_i(\hx_i^k) \eqdef \mathrm{Consensus}(\{\nabla^2 f_j(\hx_j^k)\}_{j=1}^\numMach, \THh{k}).
            \end{gathered}
        \end{equation}
    \STATE Make the local step:
        \begin{equation}
            \label{eq:step}
            x_{i}^{k+1} = \argmin_{x\in\R^\dimd}  \home_i^k(x; \hx^k_i),
        \end{equation}
    \ENDFOR
  \end{algorithmic}
\end{algorithm}

\begin{definition}[Local inexact Taylor approximation]
\label{def:inexact_taylor}
Given the local point $\hx_i^k$ and local inexact derivatives $\hg_i(\hx_i^k),\hH_i(\hx_i^k)$, define the local Taylor approximation
\begin{equation}
\label{eq:inexact_taylor_2ord}
    \begin{aligned}
        \hphi^{k}_i(x; \hx^k_i)
        \eqdef &
        ~f(\hx_i^k) + \la \hg_i(\hx_i^k), x-\hx_i^k\ra \\ 
        & + \tfrac12\la \hH_i(\hx_i^k)(x-\hx_i^k), x-\hx_i^k\ra.
    \end{aligned}
\end{equation}
\end{definition}

The next lemma shows that under the consensus procedure, the quality of approximation of global objective $f$ can be controlled by $\hphi^{k}_i$.
\begin{lemma}[Inexact Taylor bounds under consensus]
\label{lem:inexact_taylor}
Let Assumptions~\ref{as:L1},~\ref{as:L2} hold. Let $\hg_i(\hx_i^k)$ and $\hH_i(\hx_i^k)$ be the local gradient and Hessian approximations obtained after the consensus procedure at node $i \in [\numMach]$, with consensus errors defined in~\eqref{eq:cons_bounds_x},~\eqref{eq:cons_bounds_derivatives}. Define the aggregated consensus errors 
\begin{equation}
    \label{eq:agg_deltas}
    \begin{aligned}
       \Done{\hx}{k} &\eqdef \hDeg{\hx}{k} + 2\Lavg_1\hDex{\hx}{k}, \\
       \Dtwo{\hx}{k} &\eqdef \hDeH{\hx}{k} + 2\Lavg_2\hDex{\hx}{k}.
    \end{aligned}
\end{equation}
Then, for any $x\in\R^\dimd$,
\begin{equation}
    \label{eq:inexact_taylor_value}
    \begin{aligned}
        \bigl| f(x) - \hphi_i^k(x; \hx_i^k) \bigr| \le~ &\Done{\hx}{k}\|x-\hx_i^k\| + \tfrac{\Dtwo{\hx}{k}}{2} \|x-\hx_i^k\|^2\\
        & + \tfrac{\Lavg_2}{6}\|x-\hx_i^k\|^3  .
    \end{aligned}
\end{equation}
Moreover,
\begin{equation}
\label{eq:inexact_taylor_grad}
\begin{aligned}
    \|\nabla f(x) -  \nabla \hphi_i^k(x; \hx_i^k)\|
    \le~&\Done{\hx}{k} +\Dtwo{\hx}{k} \|x-\hx_i^k\| \\
    &+ \tfrac{\Lavg_2}{2}\|x-\hx_i^k\|^2.
\end{aligned}
\end{equation}
\end{lemma}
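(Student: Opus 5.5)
Write $r = x - \hx_i^k$ and split the error through the exact global Taylor model at the consensus point:
\[
f(x) - \hphi_i^k(x;\hx_i^k) = \bigl[f(x) - \phi(x;\hx_i^k)\bigr] + \bigl[\phi(x;\hx_i^k) - \hphi_i^k(x;\hx_i^k)\bigr].
\]
The first bracket is at most $\tfrac{\Lavg_2}{6}\|r\|^3$ by \eqref{eq:taylor_bound_global}. The function-value terms $f(\hx_i^k)$ cancel in the second bracket, so it equals
\[
\langle \nabla f(\hx_i^k) - \hg_i(\hx_i^k), r\rangle + \tfrac12\langle (\nabla^2 f(\hx_i^k) - \hH_i(\hx_i^k)) r, r\rangle .
\]
It therefore suffices to prove
\[
\|\hg_i(\hx_i^k) - \nabla f(\hx_i^k)\| \le \Done{\hx}{k}, \qquad \opnorm{\hH_i(\hx_i^k) - \nabla^2 f(\hx_i^k)} \le \Dtwo{\hx}{k}.
\]
Cauchy--Schwarz and the operator-norm bound then give \eqref{eq:inexact_taylor_value}.

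For \eqref{eq:inexact_taylor_grad} I use the same split:
\[
\nabla f(x) - \nabla\hphi_i^k(x;\hx_i^k) = \bigl[\nabla f(x) - \nabla\phi(x;\hx_i^k)\bigr] + \bigl(\nabla f(\hx_i^k) - \hg_i(\hx_i^k)\bigr) + \bigl(\nabla^2 f(\hx_i^k) - \hH_i(\hx_i^k)\bigr) r .
\]
The first term is controlled by \eqref{eq:taylor_grad_bound_global}, and the other two by the same two derivative bounds.

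The main step is the derivative bound. The consensus output $\hg_i(\hx_i^k)$ approximates $\tfrac1m\sum_j \nabla f_j(\hx_j^k)$, which is evaluated at the different points $\hx_j^k$, not at the single point $\hx_i^k$. The triangle inequality gives
\[
\|\hg_i - \nabla f(\hx_i^k)\| \le \hDeg{\hx}{k} + \Bigl\|\tfrac1m\sum_j \bigl(\nabla f_j(\hx_j^k) - \nabla f_j(\hx_i^k)\bigr)\Bigr\| .
\]
By Assumption~\ref{as:L1}, the last term is at most $\tfrac1m\sum_j L_{1,j}\|\hx_j^k - \hx_i^k\|$. Passing through the average, $\|\hx_j^k - \hx_i^k\| \le \|\hx_j^k - \bx^k\| + \|\bx^k - \hx_i^k\| \le 2\hDex{\hx}{k}$ by \eqref{eq:cons_bounds_x}. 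This yields $\hDeg{\hx}{k} + 2\Lavg_1\hDex{\hx}{k} = \Done{\hx}{k}$.

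The Hessian bound is identical, using Assumption~\ref{as:L2} and the operator norm, and gives $\Dtwo{\hx}{k}$. I take $\hDex{\hx}{k}$ to be the uniform-in-$i$ bound on the distance from each consensus iterate to the average; this is what produces the factor $2$. The only subtlety I expect is this bookkeeping: recognizing that the consensus target is the average of derivatives at mismatched points, and bridging to $\hx_i^k$ through $\bx^k$.
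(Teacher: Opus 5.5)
Your proposal is correct and follows essentially the same route as the paper. The paper likewise splits through the exact global Taylor model $\phi(x;\hx_i^k)$. It then separates the derivative error into a consensus term and a disagreement term, where the disagreement term (consensus target at mismatched points $\hx_j^k$ versus $\nabla f(\hx_i^k)$, resp.\ Hessians) is controlled by $L_{1,j}$/$L_{2,j}$-Lipschitzness together with $\|\hx_i^k-\hx_j^k\|\le 2\hDex{\hx}{k}$.
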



Thus, Lemma~\ref{lem:inexact_taylor} provides a bound on the inexactness of the local Taylor model $\hphi_i^k$ in terms of the consensus errors. This bound is similar to those obtained for inexact tensor methods (see, e.g.,~\cite[Lemmas~1--2]{agafonov2023inexact}) and suggests that similar analysis techniques could be applied.

However, a key difference in our setting is that the cubic step is performed locally at each node, while the convergence guarantees must be established for the global objective evaluated at the averaged iterate $\bx^k$. As a result, the standard analysis cannot be applied directly, since it typically controls the descent of a single sequence of iterates, whereas here multiple local sequences are generated and only their average is used in the analysis. This leads to additional error terms, which, as we show, can be controlled by increasing the number of consensus steps.

\textbf{The Method.}
The proposed method is a decentralized variant of the Cubic Regularized Newton. At every iteration, each node first approximates the network average iterate by consensus, obtaining $\hx_i^k$. It then computes local first- and second-order derivatives at $\hx_i^k$ and applies consensus again to obtain approximations of the global gradient $\hg_i(\hx_i^k)$ and Hessian $\hH_i(\hx_i^k)$. Using these quantities, node $i$ builds the local inexact cubic model $\home_i^k(x; \hx_i^k)$ and sets $x_i^{k+1}$ to its minimizer. The resulting method is summarized in Algorithm~\ref{alg:basic}. All scalar parameters, namely $L$, $\gamma$, $\delta_{1,k}$, $\delta_{2,k}$, and the prescribed numbers of consensus rounds, are common algorithmic parameters known to all nodes. The local computations are node-specific only through the local functions $f_i$ and the local consensus outputs. 

\begin{figure}[htbp]
    \centering
    \includegraphics[width=0.5\textwidth]{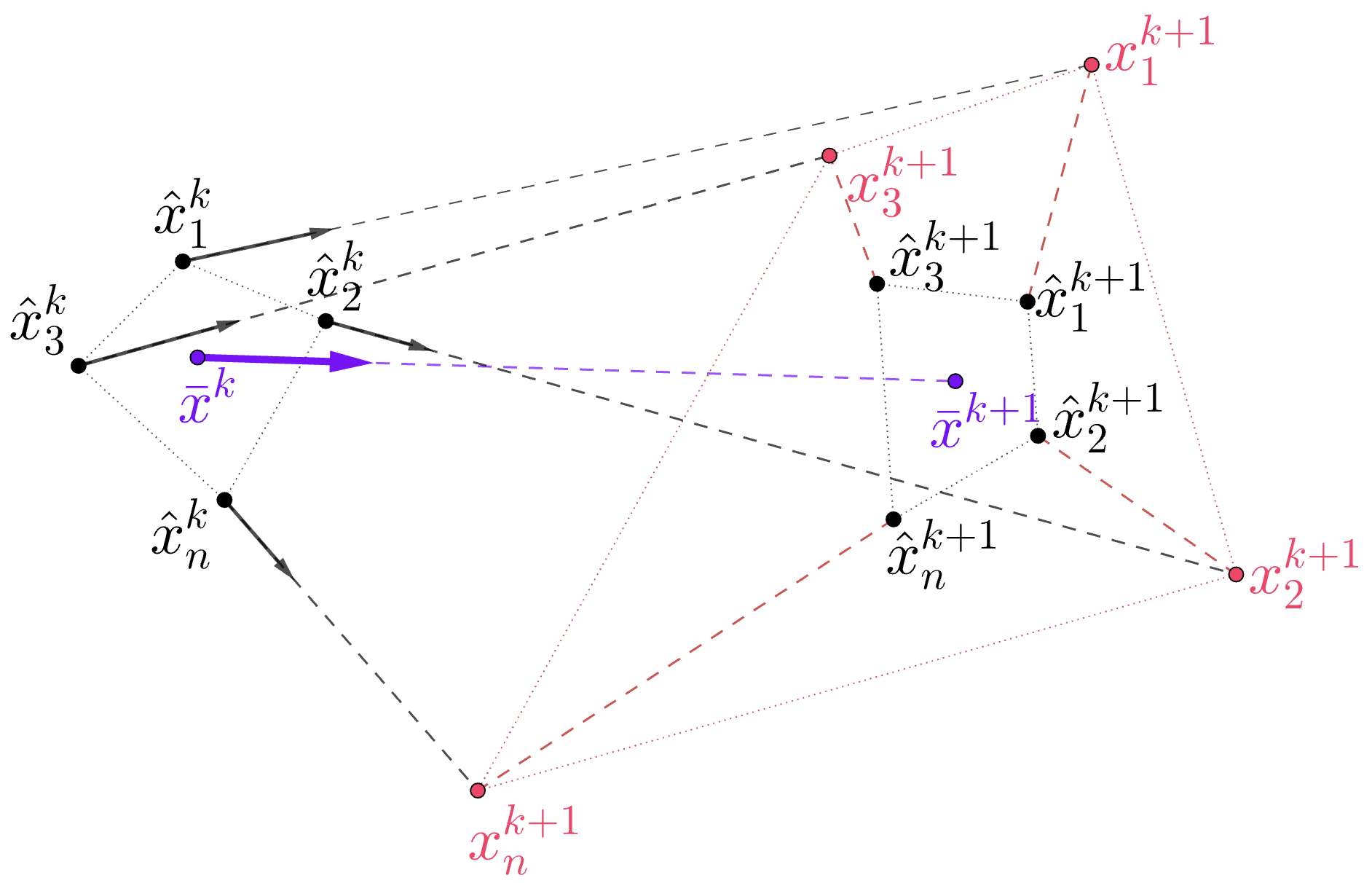} 
    \caption{Geometric interpretation of a decentralized cubic Newton step.}
    \label{fig:newton_step}
\end{figure}


An iteration of the method can be visualized as in Figure~\ref{fig:newton_step}.

Starting from approximately synchronized states $\hx_i^k$, obtained via consensus~\eqref{eq:algo_cons_x}, each node computes local first- and second-order derivatives at $\hx_i^k$ and applies consensus to obtain approximations of the global gradient $\hg_i(\hx_i^k)$ and Hessian $\hH_i(\hx_i^k)$. Using these quantities, node $i$ constructs its local inexact cubic model $\home_i^k(x; \hx_i^k)$.

Each node then independently minimizes its local model, producing the new iterate $x_i^{k+1}$ (see Figure~\ref{fig:newton_step}, red points). Since these models rely on approximate global information, the resulting points $x_i^{k+1}$ are generally different across nodes and become spatially dispersed.

This dispersion is the primary source of error introduced by decentralization. At the next iteration, the consensus step contracts these points toward their average, producing new approximately synchronized states $\hx_i^{k+1}$. Throughout this process, our theoretical analysis tracks the trajectory of the exact average $\bar{x}^k$ (purple trajectory), demonstrating that it effectively performs a global cubic Newton step despite the local approximations.

The cubic subproblem~\eqref{eq:step} is solved locally at each node using standard routines for cubic regularization~\cite{nesterov2006cubic}. In practice, one can employ implementations such as~\cite{kamzolov2024optami}.  The computational cost of solving this subproblem is typically of order $\widetilde{\cO}(\dimd^3)$ per iteration.

As a result, the method is fully decentralized: all nodes operate in parallel using only local computations and communication with neighbors.

The convergence analysis below quantifies how these consensus errors affect the quality of the local model and the descent of the global objective.

\textbf{Convergence Analysis.} We begin with a one-step descent lemma.

\begin{lemma}[Descent Lemma]
\label{lem:node_descent}
Let Assumptions~\ref{as:L1},\ref{as:L2} 
hold. 
Let $x_i^{k+1}$ be defined by~\eqref{eq:step} with parameters $\delta_{1,k},\delta_{2,k}, L$ satisfying
\begin{equation*}
    \delta_{1,k} \ge \Done{\hx}{k},\quad \delta_{2,k} \ge \Dtwo{\hx}{k}, \quad L \ge \Lavg_2,
\end{equation*}
where $\Done{\hx}{k}$ and $\Dtwo{\hx}{k}$ are defined in~\eqref{eq:agg_deltas}.
Then, for every node $i$,
\begin{equation}
\label{eq:node_descent_final}
    \begin{aligned}
        f(x_i^{k+1})
        \le
        \min_{x\in\R^\dimd}
    \left\{\right.&\left.
            f(x)
            + \tfrac{L+\Lavg_2}{6}\|x-\hx_i^k\|^3
    \right.\\ 
        &\left. 
            + \ls\gamma\delta_{1,k} + \delta_{2,k}\rs\|x-\hx_i^k\|^2 
\right\} + \tfrac{\delta_{1,k}}{\gamma}.
    \end{aligned}
\end{equation}
\end{lemma}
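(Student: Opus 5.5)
The plan is the standard two-sided comparison for cubic Newton: show that the full model $\hat\omega_i^k$, plus the missing value term $f(\hx_i^k)$, is an upper bound for $f$ everywhere, and at most $f$ plus the regularization terms everywhere. Write $r=\|x-\hx_i^k\|$, $\Delta_1=\Done{\hx}{k}$, $\Delta_2=\Dtwo{\hx}{k}$, and $M(x)\eqdef f(\hx_i^k)+\home_i^k(x;\hx_i^k)$. Then
\[
M(x)=\hphi_i^k(x;\hx_i^k)+\tfrac{\delta_{1,k}}{2\gamma}+\tfrac{\gamma\delta_{1,k}+\delta_{2,k}}{2}r^2+\tfrac{L}{6}r^3 ,
\]
and $M$ has the same minimizer $x_i^{k+1}$ as $\home_i^k$, since the added term is constant.

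\emph{Step 1 (upper model).} I show $f(x)\le M(x)$ for all $x$. By the value bound \eqref{eq:inexact_taylor_value} of Lemma~\ref{lem:inexact_taylor},
\[
f(x)\le \hphi_i^k(x;\hx_i^k)+\Delta_1 r+\tfrac{\Delta_2}{2}r^2+\tfrac{\Lavg_2}{6}r^3 .
\]
The linear term is absorbed by the smoothing: Young's inequality gives $\Delta_1 r\le \tfrac{\Delta_1}{2\gamma}+\tfrac{\gamma\Delta_1}{2}r^2$, so with $\delta_{1,k}\ge\Delta_1$ we get $\Delta_1 r\le \tfrac{\delta_{1,k}}{2\gamma}+\tfrac{\gamma\delta_{1,k}}{2}r^2$. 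The remaining terms are covered by $\delta_{2,k}\ge\Delta_2$ and $L\ge\Lavg_2$. This Young step explains why the constant $\delta_{1,k}/(2\gamma)$ appears in the model, and it is the only nontrivial point.

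\emph{Step 2 (lower comparison).} I apply \eqref{eq:inexact_taylor_value} in the other direction:
\[
\hphi_i^k(x;\hx_i^k)\le f(x)+\Delta_1 r+\tfrac{\Delta_2}{2}r^2+\tfrac{\Lavg_2}{6}r^3 .
\]
Using the same Young bound for $\Delta_1 r$ gives
\[
M(x)\le f(x)+\tfrac{\delta_{1,k}}{\gamma}+(\gamma\delta_{1,k}+\delta_{2,k})r^2+\tfrac{L+\Lavg_2}{6}r^3 .
\]
The two halves $\tfrac{\delta_{1,k}}{2\gamma}$ add to $\tfrac{\delta_{1,k}}{\gamma}$. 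The quadratic coefficient is $\tfrac{\gamma\delta_{1,k}}{2}+\tfrac{\gamma\delta_{1,k}+\delta_{2,k}}{2}+\tfrac{\Delta_2}{2}\le \gamma\delta_{1,k}+\delta_{2,k}$.

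\emph{Step 3 (combine).} Since $x_i^{k+1}$ minimizes $M$, chaining the two steps gives, for every $x$,
\[
f(x_i^{k+1})\le M(x_i^{k+1})\le M(x)\le f(x)+\tfrac{L+\Lavg_2}{6}r^3+(\gamma\delta_{1,k}+\delta_{2,k})r^2+\tfrac{\delta_{1,k}}{\gamma}.
\]
Taking the minimum over $x$ yields \eqref{eq:node_descent_final}.

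I expect no real obstacle beyond matching constants, which Step 2 shows works out exactly. Convexity is not needed, and the existence of the minimizer follows from the cubic term with $L>0$.
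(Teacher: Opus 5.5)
Your proof is correct and follows essentially the same route as the paper. The paper also (i) bounds $f(x_i^{k+1})\le f(\hx_i^k)+\home_i^k(x_i^{k+1};\hx_i^k)$ via Lemma~\ref{lem:inexact_taylor} and Young's inequality, applying this only at $x_i^{k+1}$ rather than at every $x$, and (ii) proves the reverse comparison $f(\hx_i^k)+\home_i^k(x;\hx_i^k)\le f(x)+(\gamma\delta_{1,k}+\delta_{2,k})\|x-\hx_i^k\|^2+\tfrac{L+\Lavg_2}{6}\|x-\hx_i^k\|^3+\tfrac{\delta_{1,k}}{\gamma}$, with the same constant bookkeeping, before minimizing over $x$.
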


Next, we transform the result of Lemma~$\ref{lem:node_descent}$ into a bound on the averaged iterate $\bx^{k+1}$, which will be used to derive convergence guarantees for $f(\bx^{k+1}) - f(x^*)$. Averaging~\eqref{eq:node_descent_final} over all nodes and applying standard inequalities yields
\begin{equation}
\label{eq:avg_descent_form}
\begin{aligned}
    f(\bx^{k+1}) &\le  \tfrac{1}{\numMach}\tsum_{i=1}^\numMach f(x_i^{k+1}) 
    \le \min_{x\in\R^\dimd} \biggl\{ f(x) + \tfrac{4(L+\Lavg_2)}{6}\|x-\bx^k\|^3 \\
    & + 2(\gamma\delta_{1,k} + \delta_{2,k})\|x-\bx^k\|^2 \biggr\} + \tfrac{\delta_{1,k}}{\gamma}\\
    & + \underbrace{\tfrac{2(L+\Lavg_2)}{3}\hDex{\hx}{k}^3 + 2(\gamma\delta_{1,k} + \delta_{2,k})\hDex{\hx}{k}^2}_{\substack{\eqdef \econs_k}},
\end{aligned}
\end{equation}
where the first inequality follows from Jensen inequality due to convexity of $f$.

Based on this inequality, the process guarantees that the sequence of averaged iterates $\bx^{k+1}$ is almost monotone, i.e., monotone up to an accuracy of $\frac{\delta_{1,k}}{\gamma} + \econs_k$. Indeed, by upper bounding the minimum on the right-hand side with the value of its objective at $x = \bx^k$, we obtain:
\begin{equation*}
    f(\bx^{k+1}) \le f(\bx^k) + \tfrac{\delta_{1,k}}{\gamma} + \econs_k.
\end{equation*}
Suppose that the inaccuracy regularization parameters $\delta_{1, k}$ and consensus errors $\econs_k$ are bounded from above throughout the execution of the algorithm: $\delta_{1,k} \le \delta_1$ and $\econs_k \le \econs$ for all $k$. 
Therefore, for any iteration $\numIter \ge 1$, we have:
\begin{equation}
\label{eq:error_accumulation}
    f(\bx^\numIter) \le f(\bx^0) + \numIter \ls \tfrac{\delta_1}{\gamma} + \econs \rs.
\end{equation}
Further we show that we can control parameters $\delta_{1}$ and $\hDex{\hx}{k}$ such that $\numIter \ls \tfrac{\delta_1}{\gamma} + \econs \rs \leq \e$ (see Remark~\ref{rem:lebesgue_diameter_convex} and Remark~\ref{rem:sc_lebesgue_set_bound}).
In this regard, we define the enlarged Lebesgue set for the starting point $\bx^0$ as:
\begin{equation*}
    \mathcal{L}'(\bx^0) = \lb x \in \R^\dimd \;\middle|\; f(x) \le f(\bx^0) + \e \rb.
\end{equation*}
Let us denote by $D$ the maximum distance on this set to the global optimum $x^*$:
\begin{equation}
\label{eq:lebesgue_set_diameter}
    D = \max_{x \in \mathcal{L}'(\bx^0)} \|x - x^*\|.
\end{equation}

\begin{theorem}[Convex case]
    \label{thm:basic_delta_choice}
    Let Assumptions~\ref{as:cnvxty}, \ref{as:L1}, and \ref{as:L2} hold. Let $\numIter$ be the total number of iterations, 
    \begin{equation*}
        \delta_{1, k} = \delta_1 \ge \max_{0 \leq j \leq \numIter} \Done{\hx}{j},~~\delta_{2, k} = \delta_2 \ge \max_{0 \leq j \leq \numIter} \Dtwo{\hx}{j},
    \end{equation*}
    and $L \ge \Lavg_2$ in Algorithm~\ref{alg:basic}.
    Let $\e>0$ be the desired accuracy and let the gradient and Hessian inexactness levels be chosen for every $k = 0, \ldots, N$ as
    \begin{equation}
        \label{eq:non_acc_cons_bound_g_H_main}
        \hDeg{\hx}{k} \eqdef \hDe_{g \vert \hx } \le \tfrac{\sqrt{2}}{144}\tfrac{\e}{D}, \quad
        \hDeH{\hx}{k} \eqdef \hDe_{H \vert \hx } \le \tfrac{\sqrt{3}}{72}\sqrt{\tfrac{\e(L+\Lavg_2)}{D}}.
    \end{equation}

    Depending on the target accuracy $\e$, we choose the number of iterations $\numIter$ and the state inexactness level $\hDe_{\hx \vert k} \eqdef \hDe_{\hx}$ as follows:

    \begin{enumerate}[noitemsep,topsep=0pt,leftmargin=12pt]
        \item If $\e$ is sufficiently small, namely $\e \le 12(L+\Lavg_2)D^3$, we set:
    \begin{equation}
    \label{eq:num_iter_small_main}
        \numIter = \left\lceil \sqrt{\tfrac{108(L+\Lavg_2) D^3}{\e}} \right\rceil - 2,
    \end{equation}
    \begin{equation}
        \label{eq:non_acc_cons_bound_x_small_main}
        \hDe_{\hx} \le \min \lb
            \tfrac{\sqrt{2}\e}{288 \Lavg_1 D}, 
            \tfrac{\sqrt{3\e(L+\Lavg_2)}}{144 \Lavg_2\sqrt{D}} , 
            \tfrac{\sqrt\e}{3\sqrt{(L+\Lavg_2)D}}
        \rb.
    \end{equation}
        \item Otherwise, if $\e > 12(L+\Lavg_2)D^3$, we set $N = 1$ and:
    \begin{equation}
        \label{eq:non_acc_cons_bound_x_large_main}
        \hDe_{\hx} \le \min \lb
            \tfrac{\sqrt{2}\e}{288 \Lavg_1 D}, 
            \tfrac{\sqrt{3\e(L+\Lavg_2)}}{144 \Lavg_2\sqrt{D}}, 
            \tfrac{\e^{1/3}}{(6(L+\Lavg_2))^{1/3}}, 
            D
        \rb.
    \end{equation}
    \end{enumerate}

    We also set $\gamma = \tfrac{\sqrt{(\numIter+1)(\numIter+2)}}{6D}$. Then after $\numIter+1$ iterations Algorithm~\ref{alg:basic} outputs an $\e$-solution, i.e.,
    \begin{equation*}
        f(\bx^{\numIter+1})-f(x^*) \le \e.
    \end{equation*}
\end{theorem}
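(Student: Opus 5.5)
The plan is to run the standard estimate-sequence-free analysis of Cubic Newton for convex functions (as in \cite{nesterov2006cubic}) on the averaged sequence $\bx^k$, starting from the averaged descent inequality \eqref{eq:avg_descent_form}. First, the parameter choices must satisfy the hypotheses of Lemma~\ref{lem:node_descent}. We take $\delta_1 = \hDe_{g\vert\hx} + 2\Lavg_1\hDe_{\hx}$ and $\delta_2 = \hDe_{H\vert\hx} + 2\Lavg_2\hDe_{\hx}$, so that $\delta_1 \ge \Done{\hx}{k}$ and $\delta_2\ge\Dtwo{\hx}{k}$ by \eqref{eq:agg_deltas}. With \eqref{eq:non_acc_cons_bound_g_H_main} and the first two entries of the minima in \eqref{eq:non_acc_cons_bound_x_small_main}--\eqref{eq:non_acc_cons_bound_x_large_main}, this gives $\delta_1 \le \tfrac{\sqrt2}{72}\tfrac{\e}{D}$ and $\delta_2\le \tfrac{\sqrt3}{36}\sqrt{\e(L+\Lavg_2)/D}$. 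These bounds are used twice. First, through \eqref{eq:error_accumulation} they show that $\numIter(\delta_1/\gamma + \econs)\le\e$ with the chosen $\gamma$, so every $\bx^k$ stays in $\mathcal{L}'(\bx^0)$ and $\|\bx^k - x^*\|\le D$. Second, they control the per-step error.

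Next, in \eqref{eq:avg_descent_form}, bound the minimum over $x$ by restricting to the segment $x = \bx^k + \alpha_k(x^*-\bx^k)$ with $\alpha_k\in[0,1]$. Convexity gives $f(x)\le f(\bx^k) - \alpha_k(f(\bx^k)-f^*)$, and $\|x-\bx^k\|\le\alpha_k D$. Writing $r_k = f(\bx^k)-f^*$, this yields
\begin{equation*}
r_{k+1}\le (1-\alpha_k) r_k + \tfrac{2(L+\Lavg_2)}{3}\alpha_k^3D^3 + 2(\gamma\delta_1+\delta_2)\alpha_k^2D^2 + \tfrac{\delta_1}{\gamma}+\econs_k .
\end{equation*}
Choose $\alpha_k = \tfrac{3}{k+3}$ and unroll by multiplying by $A_{k+1} = (k+1)(k+2)(k+3)$ and telescoping (the $k=0$ step has $\alpha_0=1$ and kills $r_0$). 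The cubic term then gives the main $\cO((L+\Lavg_2)D^3/\numIter^2)$ contribution. The quadratic term, $\delta_1/\gamma$, and $\econs$ accumulate roughly like $\delta_2 D^2/\numIter$, $\gamma\delta_1D^2/\numIter$, $\numIter\delta_1/\gamma$ and $\numIter\,\econs$. With $\gamma\sim\numIter/D$, the two $\delta_1$ contributions balance to $\cO(\delta_1 D)$, which is why $\gamma=\sqrt{(\numIter+1)(\numIter+2)}/(6D)$ is chosen and why $\delta_1\lesssim\e/D$ suffices. The $\delta_2$ term requires $\delta_2\lesssim \numIter\e/D^2$, which is ensured since $\numIter\gtrsim\sqrt{(L+\Lavg_2)D^3/\e}$. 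The $\econs_k$ term requires $\hDe_{\hx}^3(L+\Lavg_2)\numIter\lesssim\e$ and $\hDe_{\hx}^2(\gamma\delta_1+\delta_2)\numIter\lesssim\e$; the third entry of the minimum, $\hDe_{\hx}\le\sqrt{\e/((L+\Lavg_2)D)}/3$, together with $\numIter\sim\sqrt{(L+\Lavg_2)D^3/\e}$ is meant to enforce both. Finally, $\numIter$ from \eqref{eq:num_iter_small_main} makes the main term at most a fixed fraction of $\e$, for instance $\e/2$, and the explicit constants $\sqrt2/144$, $\sqrt3/72$, $108$ are chosen to split the remaining $\e/2$ among the error terms.

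In the large-$\e$ case, take $\numIter=1$ and $\alpha=1$, i.e. plug $x=x^*$ directly. Then $r\le \tfrac{2(L+\Lavg_2)}{3}D^3 + \cdots$, and since $\e>12(L+\Lavg_2)D^3$ this is a small multiple of $\e$. The extra entries $\hDe_{\hx}\le(\e/(6(L+\Lavg_2)))^{1/3}$ and $\hDe_{\hx}\le D$ handle $\econs$ directly here, since the choice $\numIter\sim\sqrt{\cdot}$ is no longer available.

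The main obstacle is the bookkeeping of constants, not the structure. Two couplings need care. First, $\gamma$ depends on $\numIter$, and the bound $\|\bx^k-x^*\|\le D$ is itself proved from the final error bound, so the level-set argument must come first. Second, in $\econs$ the product $\gamma\delta_1\hDe_{\hx}^2$ must be checked to be $\cO(\e/\numIter)$. I would first verify the telescoped inequality with symbolic constants and only then substitute the specific choices to confirm that each of roughly five error terms is at most $\e/10$.
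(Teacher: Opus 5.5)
Your proposal is correct and follows the paper's proof essentially step by step. The paper likewise restricts the minimum in \eqref{eq:avg_descent_form} to the segment toward $x^*$ and unrolls with $\alpha_k = 3/(k+3)$ and weights $(k+1)(k+2)(k+3)$ (Theorem~\ref{thm:convex_case_theorem}); it balances the two $\delta_1$ contributions via $\gamma=\sqrt{(\numIter+1)(\numIter+2)}/(6D)$, uses exactly your bounds $\delta_1\le\tfrac{\sqrt2}{72}\tfrac{\e}{D}$ and $\delta_2\le\tfrac{\sqrt3}{36}\sqrt{\e(L+\Lavg_2)/D}$, and justifies $\|\bx^k-x^*\|\le D$ through \eqref{eq:error_accumulation} as in Remark~\ref{rem:lebesgue_diameter_convex}.

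The deviations are minor. In the large-$\e$ case you take $\alpha=1$ at the last step rather than evaluating the telescoped bound at $\numIter=1$, and both work. For the constants, the paper splits the budget as $\tfrac{\e}{6}+\tfrac{\e}{6}+\tfrac{\e}{6}+\tfrac{\e}{2}$ (cubic, $\delta_1$, $\delta_2$, consensus terms) rather than a uniform $\e/10$ per term. These constants do not always allow the uniform split; for example, the $\delta_1$ term is about $0.12\,\e$ when $\numIter=1$.
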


\begin{remark}
\label{rem:lebesgue_diameter_convex}
    In ~\eqref{eq:lebesgue_set_diameter} we defined $D = \max_{x \in \mathcal{L}'(\bx^0)} \|x - x^*\|$ where $\mathcal{L}'(\bx^0) = \lb x \in \R^\dimd \;\middle|\; f(x) \le f(\bx^0) + \e \rb.$ However,  we have $f(\bx^\numIter) \le f(\bx^0) + \numIter \ls \tfrac{\delta_1}{\gamma} + \econs \rs$  from~\eqref{eq:error_accumulation}. Choosing regularization parameter $\delta_1$, number of iterations $N$ and consensus errors as in Theorem \ref{thm:basic_delta_choice}, we have:
    \begin{align*}
        \numIter \ls \tfrac{\delta_1}{\gamma} + \econs \rs \leq \e
    \end{align*}
    Then, our method is monotone up to a small inaccuracy $\e$. The detailed proof is provided in \ref{prf:remark_convex_proof}
\end{remark}

Thus, if the consensus inaccuracies are controlled according to~\eqref{eq:non_acc_cons_bound_x_small_main}, Theorem~\ref{thm:basic_delta_choice} yields the iteration complexity
\begin{equation*}
     N =\cO \left( \sqrt{\tfrac{(L+\Lavg_2) D^3}{\e}} \right).
\end{equation*}
This rate matches the convergence rate of the exact Cubic Regularized Newton method~\cite{nesterov2006cubic}.

\begin{lemma}[Communication Complexity]
\label{lem:comm_complexity}
    Suppose Assumptions~\ref{assum:mixing_matrix_sequence},~\ref{as:L1} and~\ref{as:L2} hold. To guarantee the target consensus accuracies for points, gradients, and Hessians defined in\eqref{eq:non_acc_cons_bound_x_small_main} and ~\eqref{eq:non_acc_cons_bound_g_H_main}, it is sufficient to perform the following number of consensus steps at each iteration $k = 0, \ldots, N$:
    \begin{subequations}
    \label{eq:T_complexities_main}
    \begin{align}
        \Tx{k} 
        & \geq 
        \tfrac{\tau}{\lambda} \log 
            \left( 
                2D\sqrt{\numMach} \cdot \max 
                    \left\{ 
                        \tfrac{288 \Lavg_1 D}{\sqrt{2}\e}, 
                    \right. 
            \right. 
        \notag \\
        & \qquad 
            \left. 
                \left. 
                    \tfrac{144 \Lavg_2\sqrt{D}}{\sqrt{3\e(L+\Lavg_2)}}, 
                    \tfrac{3\sqrt{(L+\Lavg_2)D}}{\sqrt{\e}} 
                \right\} 
            \right), 
        \label{eq:Tx_final_main} 
        \\[2ex]
        \Tg{k} 
        & \geq 
        \tfrac{\tau}{\lambda} \log 
            \left( 
                \tfrac{144 D \sqrt{\numMach} \big( \zeta_g + 2\Lmax_1 D \big)}{\sqrt{2}\e} 
            \right), 
        \label{eq:Tg_final_main} 
        \\[2ex]
        \THh{k} 
        & \geq 
        \tfrac{\tau}{\lambda} \log 
            \left( 
                \tfrac{72\sqrt{\numMach D} \big( \zeta_H + 2\Lmax_2\sqrt{\dimd} D \big)}{\sqrt{3\e(L+\Lavg_2)}} 
            \right),
        \label{eq:TH_final_main}
    \end{align}
    \end{subequations}
    where for brevity we define $\zeta_g \eqdef \sqrt{\tfrac{1}{\numMach} \sum_{i=1}^\numMach \|\nabla f_i(x^*)\|^2}$ and $\zeta_H \eqdef \sqrt{\tfrac{1}{\numMach} \sum_{i=1}^\numMach \|\nabla^2 f_i(x^*) - \nabla^2 f(x^*)\|_F^2}$.
\end{lemma}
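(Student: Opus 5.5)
The plan is to reduce all three bounds to a single consequence of the contraction property in Assumption~\ref{assum:mixing_matrix_sequence}, and then bound the initial disagreement of each consensus input.

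\textbf{Contraction over $T$ rounds.} Since every $\mW^k$ is doubly stochastic, the average $\bar{\mU}$ is preserved by each round, and each $\mW^k$ does not increase $\|\mU-\bar{\mU}\|_F$. Grouping $T$ rounds into $\lfloor T/\tau\rfloor$ blocks of length $\tau$ gives
\begin{equation*}
    \|\hat{\mU}-\bar{\mU}\|_F \le (1-\lambda)^{\lfloor T/\tau\rfloor}\|\mU-\bar{\mU}\|_F \le e^{-\lambda\lfloor T/\tau\rfloor}\|\mU-\bar{\mU}\|_F.
\end{equation*}
The maximum over nodes of the row error is at most the Frobenius error. For Hessians we also use that the operator norm is at most the Frobenius norm of the flattened row. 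Hence target accuracy $\hDe$ is reached once $T\ge \tfrac{\tau}{\lambda}\log(\|\mU-\bar{\mU}\|_F/\hDe)$, up to the rounding of $T/\tau$, which is absorbed into the stated constants. The proof then reduces to bounding the initial disagreement $\|\mU-\bar{\mU}\|_F$ in each of the three cases and substituting the target accuracies from~\eqref{eq:non_acc_cons_bound_x_small_main} and~\eqref{eq:non_acc_cons_bound_g_H_main}.

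\textbf{Iterates.} We will show $\|\mX^k-\bar{\mX}^k\|_F\le 2D\sqrt{\numMach}$. Since $\|x_i^k-\bx^k\|\le\|x_i^k-x^*\|+\|\bx^k-x^*\|$, it suffices to show that every $x_i^k$ and $\bx^k$ lies within distance $D$ of $x^*$. This uses Remark~\ref{rem:lebesgue_diameter_convex} together with Lemma~\ref{lem:node_descent}: evaluating the minimum at $x=\hx_i^{k-1}$ shows the local iterates stay in $\mathcal{L}'(\bx^0)$, given the error accumulation bound~\eqref{eq:error_accumulation}. Dividing $2D\sqrt{\numMach}$ by the minimum in~\eqref{eq:non_acc_cons_bound_x_small_main} gives~\eqref{eq:Tx_final_main}.

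\textbf{Gradients and Hessians.} Here the inputs are evaluated at $\hx_j^k$. Since centering minimizes the Frobenius norm over constant shifts, we have
\begin{equation*}
    \|G-\bar G\|_F\le \Big(\tsum_j\|\nabla f_j(\hx_j^k)\|^2\Big)^{1/2}.
\end{equation*}
We then use $\|\nabla f_j(\hx_j^k)\|\le\|\nabla f_j(x^*)\|+\Lmax_1\|\hx_j^k-x^*\|$. The points $\hx_j^k$ are convex combinations of the $x_l^k$, by double stochasticity, so $\|\hx_j^k-x^*\|\le D$. Minkowski's inequality then gives $\sqrt{\numMach}(\zeta_g+\Lmax_1 D)\le\sqrt{\numMach}(\zeta_g+2\Lmax_1D)$. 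For Hessians we subtract the constant row $\nabla^2 f(x^*)$ instead. This gives
\begin{equation*}
    \|\nabla^2 f_j(\hx_j^k)-\nabla^2 f(x^*)\|_F\le\|\nabla^2 f_j(x^*)-\nabla^2 f(x^*)\|_F+\sqrt{\dimd}\,\Lmax_2D,
\end{equation*}
where the $\sqrt{\dimd}$ arises from converting the operator-norm Lipschitz bound into a Frobenius bound. The result is $\sqrt{\numMach}(\zeta_H+2\Lmax_2\sqrt{\dimd}D)$. Dividing by the targets in~\eqref{eq:non_acc_cons_bound_g_H_main} yields~\eqref{eq:Tg_final_main} and~\eqref{eq:TH_final_main}.

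\textbf{Main obstacle.} The delicate step is the uniform bound placing all local iterates $x_i^k$, not just $\bx^k$, within distance $D$ of $x^*$. This requires an induction over $k$ that couples the consensus accuracy guaranteed at step $k-1$ with the near-monotonicity from Lemma~\ref{lem:node_descent}, so that each $x_i^k$ stays in the enlarged level set $\mathcal{L}'(\bx^0)$. If this direct argument is too tight, the plan is to show $f(x_i^k)\le f(\bx^0)+\e$ from~\eqref{eq:node_descent_final}, using the same accumulated-error budget as Remark~\ref{rem:lebesgue_diameter_convex}.
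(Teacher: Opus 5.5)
Your plan follows the paper's proof almost step for step. The $T$-round contraction estimate is the same, and so are the initial-disagreement bounds $2D\sqrt{\numMach}$, $\sqrt{\numMach}(\zeta_g+2\Lmax_1 D)$ and $\sqrt{\numMach}(\zeta_H+2\Lmax_2\sqrt{\dimd}D)$ and the division by the target accuracies. Your centering argument for the gradient matrix is even slightly sharper, and you justify $\|\hx_j^k-x^*\|\le D$, which the paper only asserts.

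The gap is exactly in the step you flag as delicate. Lemma~\ref{lem:node_descent} at $x=\hx_i^{k-1}$ gives $f(x_i^k)\le f(\hx_i^{k-1})+\delta_1/\gamma$, so you need an upper bound on $f(\hx_i^{k-1})$. But \eqref{eq:error_accumulation} controls only $f(\bx^{k-1})$, and $\hx_i^{k-1}\neq\bx^{k-1}$. Your main argument does not say how to pass from one to the other. Your fallback states the right target, $f(x_i^k)\le f(\bx^0)+\e$, but not the mechanism. As written, the induction step does not close.

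The missing idea, which the paper uses, is to run the recursion on $\max_i f(x_i^k)$ and invoke convexity of $f$. Since $\hx_i^{k-1}$ is a convex combination of $x_1^{k-1},\dots,x_{\numMach}^{k-1}$ (a fact you already use for the gradients), $f(\hx_i^{k-1})\le\max_j f(x_j^{k-1})$. Hence $\max_i f(x_i^k)\le\max_j f(x_j^{k-1})+\delta_1/\gamma$. With a common starting point this unrolls to $\max_i f(x_i^k)\le f(\bx^0)+\numIter\delta_1/\gamma\le f(\bx^0)+\e$ for $k\le\numIter$, by the computation in the proof of Remark~\ref{rem:lebesgue_diameter_convex}. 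So every $x_i^k$ lies within $D$ of $x^*$, and by convexity of $\mathcal{L}'(\bx^0)$ so do $\bx^k$ and $\hx_i^k$.

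This bridge uses neither consensus accuracy nor smoothness. The induction over $k$ that you correctly identify (and that the paper leaves implicit) is needed only so that the hypotheses $\delta_1\ge\Done{\hx}{t}$, $\delta_2\ge\Dtwo{\hx}{t}$ of Lemma~\ref{lem:node_descent} hold at the earlier steps $t<k$.

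Your route through \eqref{eq:error_accumulation} can also be repaired, via $\Lavg_1$-smoothness at $\bx^{k-1}\in\mathcal{L}'(\bx^0)$: $f(\hx_i^{k-1})\le f(\bx^{k-1})+\Lavg_1 D\hDe_{\hx}+\tfrac{\Lavg_1}{2}\hDe_{\hx}^2$. Under \eqref{eq:non_acc_cons_bound_x_small_main} the extra term is a small multiple of $\e$ that fits in the slack of the Remark's budget, but this is more work than the convexity argument.

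A minor point: the rounding of $T/\tau$ is not literally absorbed into the stated bounds, since up to $\tau-1$ extra rounds may be needed. The paper's use of $(1-\lambda)^{T/\tau}$ is equally loose here.
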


    Combining the iteration complexity~\eqref{eq:num_iter_small_main} with the communication bounds~\eqref{eq:T_complexities_main}, we obtain that the total communication complexity required to reach an $\e$-solution:
    \begin{align*}
        \cO(1)  \tfrac{\tau}{\lambda} \sqrt{\tfrac{(L+\Lavg_2) D^3}{\e}} d^2\log\tfrac{1}{\e}.
    \end{align*}

\begin{theorem}[Strongly Convex Case]
    \label{thm:strongly_convex_complexity}
    Let the regularization parameter be chosen as $\gamma = 1/D$. Let $\e > 0$ be the desired accuracy. Also let Assumptions~\ref{as:strcnvxty},~\ref{as:L1},~\ref{as:L2} hold.

    Let inexactness levels be chosen for every $k=0, \ldots, \numIter$ as:
    \begin{equation}
    \label{eq:sc_cons_bound_main}
    \begin{aligned}
        \hDex{\hx}{k} &\eqdef \hDe_{\hx} 
         \le 
        \min 
            \left\{ 
                \tfrac{\alpha \e}{24 \Lavg_1 D}, 
                \sqrt[3]{\tfrac{\alpha\e}{4(L+\Lavg_2)}}, 
            \right. 
        \\
        & \qquad
            \left. 
                2D \sqrt{ \tfrac{\alpha\e \Lavg_1}{3\muavg D^2\Lavg_1 + 4\alpha\e(2\Lavg_1 + D\Lavg_2)} }, 
                \tfrac{\muavg}{64(\Lavg_1/D + \Lavg_2)} 
            \right\}, 
        \\[3ex]
        \hDeg{\hx}{k} &\eqdef \hDe_{g \vert \hx} 
         \le 
        \min 
            \left\{ 
                \tfrac{\alpha \e}{12 D}, \; 
                \tfrac{\muavg D}{32} 
            \right\}, 
        \\[1.5ex]
        \hDeH{\hx}{k} &\eqdef \hDe_{H \vert \hx} 
         \le 
        \tfrac{\muavg}{16},
    \end{aligned}
    \end{equation}
    where we define $\alpha$ as:
    \begin{equation}
        \label{eq:sc_alpha_const_main}
        \alpha = \min \left\{ \tfrac{1}{2}, \sqrt{\tfrac{3\muavg}{16(L+\Lavg_2)D}} \right\}.
    \end{equation}
    Assume that the parameters of Algorithm~\ref{alg:basic} satisfy $\delta_{1, k} = \delta_1 \geq \hDe_{g\vert \hx} + 2\Lavg_1\hDe_{\hx}$, $\delta_{2, k} = \delta_2 \geq \hDe_{H\vert \hx} + 2\Lavg_2\hDe_{\hx}$, and $L \ge \Lavg_2$. 
    
    Then after $\numIter + 1$ iterations with 
    \begin{equation}
        \label{eq:sc_iterations_bound_main}
        \numIter = \left\lceil \tfrac{1}{\alpha} \log \left( \tfrac{2\bigl(f(\bx^0) - f(x^*)\bigr)}{\e} \right) \right\rceil - 1,
    \end{equation}
    the method outputs an $\e$-solution, i.e.:
    \begin{equation*}
        f(\bx^{N+1}) - f(x^*) \le \e.
    \end{equation*}
\end{theorem}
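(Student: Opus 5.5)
The plan is to start from the averaged descent inequality \eqref{eq:avg_descent_form}, which holds under Lemma~\ref{lem:node_descent} because $\delta_1\ge \Done{\hx}{k}$, $\delta_2\ge\Dtwo{\hx}{k}$ and $L\ge\Lavg_2$. With $\gamma=1/D$, I restrict the minimum over $x$ to the segment $x=\bx^k+\alpha(x^*-\bx^k)$ with $\alpha\in(0,1]$ as in \eqref{eq:sc_alpha_const_main}. Convexity gives $f(x)\le f(\bx^k)-\alpha(f(\bx^k)-f^*)$. Hence
\begin{equation*}
f(\bx^{k+1})-f^*\le(1-\alpha)(f(\bx^k)-f^*)+\tfrac{2(L+\Lavg_2)}{3}\alpha^3 r_k^3+2\bigl(\tfrac{\delta_1}{D}+\delta_2\bigr)\alpha^2 r_k^2+\delta_1 D+\econs_k,
\end{equation*}
where $r_k=\|\bx^k-x^*\|$.

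Next I control the polynomial terms through strong convexity, using $\tfrac{\muavg}{2}r_k^2\le f(\bx^k)-f^*$. I bound $r_k\le D$, which is justified by the almost-monotonicity argument (Remark~\ref{rem:sc_lebesgue_set_bound}); this requires every error per step to be $O(\alpha\e)$. Then $\alpha^3 r_k^3\le \alpha^3 D r_k^2\le \tfrac{2\alpha^3 D}{\muavg}(f(\bx^k)-f^*)$. The choice $\alpha^2\le \tfrac{3\muavg}{16(L+\Lavg_2)D}$ makes this cubic term at most $\tfrac{\alpha}{4}(f(\bx^k)-f^*)$.

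For the quadratic term, I note that $\delta_2\le\hDe_{H|\hx}+2\Lavg_2\hDe_{\hx}$ together with $\delta_1/D$ is at most a small multiple of $\muavg$. This follows from the bounds $\hDe_{H|\hx}\le\muavg/16$, $\hDe_{g|\hx}\le\muavg D/32$ and $\hDe_{\hx}\le \muavg/(64(\Lavg_1/D+\Lavg_2))$. So $2(\delta_1/D+\delta_2)\alpha^2 r_k^2\le c\,\alpha^2(f(\bx^k)-f^*)\le \tfrac{\alpha}{4}(f(\bx^k)-f^*)$ since $\alpha\le 1/2$. Combining, the recursion becomes
\begin{equation*}
F_{k+1}\le(1-\tfrac{\alpha}{2})F_k+E,\qquad E:=\delta_1 D+\econs .
\end{equation*}
One might instead keep a factor $(1-\alpha)$ by absorbing more carefully; I would aim for whatever contraction constant is consistent with \eqref{eq:sc_iterations_bound_main}, possibly adjusting the split.

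Then I verify $E\le \alpha\e/2$ term by term from \eqref{eq:sc_cons_bound_main}:
\begin{itemize}
\item $\delta_1 D\le \hDe_{g|\hx}D+2\Lavg_1 D\hDe_{\hx}\le \alpha\e/12+\alpha\e/12$;
\item the cubic part of $\econs$ satisfies $\tfrac{2(L+\Lavg_2)}{3}\hDe_{\hx}^3\le\alpha\e/6$;
\item the quadratic part $2(\delta_1/D+\delta_2)\hDe_{\hx}^2$ is handled by the third, square-root bound on $\hDe_{\hx}$, which is exactly solving a quadratic inequality in $\hDe_{\hx}$ after expanding $\delta_1,\delta_2$.
\end{itemize}

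Unrolling the recursion gives $F_{N+1}\le e^{-\alpha(N+1)}F_0+E/\alpha\le \e/2+\e/2$ by \eqref{eq:sc_iterations_bound_main}, with constants tuned accordingly.

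The main obstacle is keeping the constants consistent so that the contraction factor matches $(1-\alpha)$ in the iteration count while $E/\alpha\le\e/2$, and justifying $r_k\le D$ along the trajectory. For the latter I would argue by induction that $f(\bx^k)\le f(\bx^0)+\e$ using the recursion itself, since $F_{k+1}\le \max(F_k, 2E/\alpha)$. This places every $\bx^k$ in $\mathcal{L}'(\bx^0)$.
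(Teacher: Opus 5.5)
There is a genuine gap in the contraction factor. Write $F_k=f(\bx^k)-f(x^*)$ and $r_k=\|\bx^k-x^*\|$. Your recursion $F_{k+1}\le(1-\tfrac{\alpha}{2})F_k+E$ is correct, but it does not give the theorem with the stated $N$. Unrolling it yields $F_{N+1}\le(1-\tfrac{\alpha}{2})^{N+1}F_0+\tfrac{2E}{\alpha}$. With $N+1=\lceil\tfrac{1}{\alpha}\log(2F_0/\e)\rceil$, the first term is only bounded by $e^{-\alpha(N+1)/2}F_0\le\sqrt{\e F_0/2}$, which exceeds $\e/2$ whenever $F_0>\e/2$. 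The error term also doubles to $2E/\alpha$, using up the whole budget.

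Your final line, $F_{N+1}\le e^{-\alpha(N+1)}F_0+E/\alpha$, silently assumes a $(1-\alpha)$ contraction that your argument never establishes. Re-tuning the split cannot fix this within your scheme. Once you interpolate with plain convexity and then trade $r_k^2$ for $F_k$ via $\tfrac{\muavg}{2}r_k^2\le F_k$ (and $r_k\le D$ in the cubic term), the positive model terms are paid for out of the contraction itself. Even ignoring the quadratic term, the best interpolation weight $t$ gives only $\max_{t}\{t-\tfrac{4(L+\Lavg_2)D}{3\muavg}t^3\}=\tfrac{4}{3\sqrt3}\,\alpha\approx0.77\,\alpha<\alpha$ in the regime $\alpha=\sqrt{3\muavg/(16(L+\Lavg_2)D)}$.

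The missing idea is to use $\muavg$-strong convexity already in the interpolation step:
\begin{equation*}
f\bigl(\bx^k+\alpha(x^*-\bx^k)\bigr)\le(1-\alpha)f(\bx^k)+\alpha f(x^*)-\tfrac{\muavg}{2}\alpha(1-\alpha)r_k^2 .
\end{equation*}
This supplies a negative term of at most $-\tfrac{\muavg\alpha}{4}r_k^2$ (since $\alpha\le\tfrac12$) on top of the full $(1-\alpha)$ contraction. Your own estimates then give two bounds:
\begin{equation*}
2\bigl(\tfrac{\delta_1}{D}+\delta_2\bigr)\alpha^2r_k^2\le\tfrac{\muavg\alpha^2}{4}r_k^2\le\tfrac{\muavg\alpha}{8}r_k^2,
\qquad
\tfrac{2(L+\Lavg_2)}{3}\alpha^3r_k^3\le\tfrac{2(L+\Lavg_2)D}{3}\alpha^3r_k^2\le\tfrac{\muavg\alpha}{8}r_k^2 .
\end{equation*}
Both positive terms are therefore cancelled outright, and you get $F_{k+1}\le(1-\alpha)F_k+E$. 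This is exactly the paper's route: non-negativity of the bracket in \eqref{eq:strong_convex_bracket}, in Theorem~\ref{thm:strongly_convex_theorem}.

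With that recursion, your check $E\le\alpha\e/2$ and the geometric estimate give $F_{N+1}\le\e/2+\e/2$. Your inductive justification of $r_k\le D$ also goes through unchanged, now using $F_{k+1}\le\max\{F_k,E/\alpha\}$.
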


\begin{remark}
\label{rem:sc_lebesgue_set_bound}
Unlike the general analysis~\eqref{eq:error_accumulation} where the additive error accumulates linearly, in the strongly convex case it is bounded uniformly. This guarantees that our method remains monotone up to a small inaccuracy $\e$, meaning $f(\bx^N) \le f(\bx^0) + \e$. The detailed proof is deferred to Appendix~\ref{prf:sc_remark_proof}.
\end{remark}

Thus, if the consensus inaccuracies are controlled by ~\eqref{eq:sc_cons_bound_main}, Theorem~\ref{thm:strongly_convex_complexity}  yields the iteration complexity
\begin{equation}
\label{eq:sc_iteration_complexity}
    \cO(1) \left( \max \left\{ 1, \left(\tfrac{(L+\Lavg_2)D}{\muavg}\right)^{1/2} \right\} \log \left( \tfrac{f(\bx^0) - f(x^*)}{\e} \right) \right).
\end{equation}

\begin{lemma}
\label{lem:strcvx_comm_complexity}
    Suppose Assumptions~\ref{as:strcnvxty},~\ref{assum:mixing_matrix_sequence},~\ref{as:L1} and~\ref{as:L2} hold. To guarantee the target consensus accuracies for points, gradients, and Hessians defined in~\eqref{eq:sc_cons_bound_main}, it is sufficient to perform the following number of consensus steps at each iteration $k = 0, \ldots, N$:

    \begin{subequations}
    \label{eq:sc_T_complexities}
    \begin{align}
        \Tx{k} 
        & \geq 
        \tfrac{\tau}{\lambda} \log 
            \left( 
                2D\sqrt{\numMach} \cdot \max 
                    \left\{ 
                        \tfrac{24 \Lavg_1 D}{\alpha \e}, 
                        \sqrt[3]{ \tfrac{4(L+\Lavg_2)}{\alpha \e} }, 
                    \right. 
            \right. 
        \notag \\
        & \qquad 
            \left. 
                \left. 
                    \sqrt{ \tfrac{3\muavg}{4\alpha\e} + \tfrac{1}{D}\bigl(\tfrac{2}{D} + \tfrac{\Lavg_2}{\Lavg_1}\bigr) } 
                \right\} 
            \right), 
        \label{eq:sc_Tx_final} 
        \\[2ex]
        \Tg{k} 
        & \geq 
        \tfrac{\tau}{\lambda} \log 
            \left( 
                \tfrac{12 D \sqrt{\numMach} \big( \zeta_g + 2\Lmax_1 D \big)}{\alpha \e} 
            \right), 
        \label{eq:sc_Tg_final} 
        \\[2ex]
        \THh{k} 
        & \geq 
        \tfrac{\tau}{\lambda} \log 
            \left( 
                \tfrac{16\sqrt{\numMach} \big( \zeta_H + 2\Lmax_2\sqrt{\dimd} D \big)}{\muavg} 
            \right), 
        \label{eq:sc_TH_final}
    \end{align}
    \end{subequations}
    where $\alpha$ is defined in~\eqref{eq:sc_alpha_const_main}.
\end{lemma}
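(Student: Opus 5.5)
The plan is to reuse the argument behind Lemma~\ref{lem:comm_complexity}, with the strongly convex targets~\eqref{eq:sc_cons_bound_main} in place of the convex ones. The basic tool comes from the contraction property in Assumption~\ref{assum:mixing_matrix_sequence}. Split $T$ rounds into $\lfloor T/\tau\rfloor$ blocks of length $\tau$. Double stochasticity makes each $\mW^k$ preserve the average, so $\|\hat{\mU}-\bar{\mU}\|_F\le(1-\lambda)^{\lfloor T/\tau\rfloor}\|\mU-\bar{\mU}\|_F$. The partial product outside complete blocks is doubly stochastic and therefore nonexpansive in the Frobenius norm on the disagreement subspace. Since the row-wise maximum is at most the Frobenius norm, and $(1-\lambda)^{T/\tau}\le e^{-\lambda T/\tau}$, the target accuracy $\epsilon_u$ is reached as soon as $T\ge \tfrac{\tau}{\lambda}\log(\|\mU-\bar{\mU}\|_F/\epsilon_u)$, up to one extra block absorbed in the constant. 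For Hessians we use $\opnorm{\cdot}\le\|\cdot\|_F$ on the flattened rows.

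It remains to bound the initial disagreement $\|\mU-\bar{\mU}\|_F$ for each of the three quantities. For the iterates, $\|\mX^k-\bar{\mX}^k\|_F^2\le\sum_i\|x_i^k-x^*\|^2$. Each $x_i^k$ is produced by Lemma~\ref{lem:node_descent}, and Remark~\ref{rem:sc_lebesgue_set_bound} shows it stays in $\mathcal{L}'(\bx^0)$, so $\|x_i^k-x^*\|\le D$ and the disagreement is at most $D\sqrt{\numMach}$. To be safe we use $2D\sqrt{\numMach}$, allowing for $\bx^k$ at distance $D$ from $x^*$. Dividing by the smallest of the four entries in~\eqref{eq:sc_cons_bound_main} gives~\eqref{eq:sc_Tx_final}. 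The third entry inverts to the square root shown, and the fourth, of order $\muavg/(\Lavg_1/D+\Lavg_2)$, is dominated by the others. We therefore plan to check that either it can be dropped or it is absorbed into the first entry through $\muavg\le\Lavg_1$ and $\alpha\e\lesssim\muavg D^2$. We expect this to be the fiddly step, though not a deep one.

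For the gradients, the disagreement is at most $\big(\sum_j\|\nabla f_j(\hx_j^k)\|^2\big)^{1/2}$. We split $\nabla f_j(\hx_j^k)=\nabla f_j(x^*)+(\nabla f_j(\hx_j^k)-\nabla f_j(x^*))$ and use Assumption~\ref{as:L1} with $\|\hx_j^k-x^*\|\le 2D$, where $\hx_j^k$ is within $\hDe_{\hx}\le D$ of $\bx^k$. This gives the bound $\sqrt{\numMach}(\zeta_g+2\Lmax_1D)$. Dividing by $\min\{\alpha\e/(12D),\muavg D/32\}$ and noting that $\alpha\e/(12D)$ is the binding term in the regime of interest yields~\eqref{eq:sc_Tg_final}. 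For the Hessians, subtracting the average only decreases the Frobenius disagreement, so we may center at $\nabla^2 f(x^*)$. Then $\|\nabla^2 f_j(\hx_j^k)-\nabla^2 f(x^*)\|_F\le\|\nabla^2 f_j(x^*)-\nabla^2 f(x^*)\|_F+\sqrt{\dimd}\,L_{2,j}\cdot2D$, using $\|A\|_F\le\sqrt{\dimd}\opnorm{A}$ and Assumption~\ref{as:L2}. This gives $\sqrt{\numMach}(\zeta_H+2\Lmax_2\sqrt{\dimd}D)$, and dividing by $\muavg/16$ yields~\eqref{eq:sc_TH_final}.

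The main obstacle is justifying the a priori boundedness $\|x_i^k-x^*\|,\|\hx_i^k-x^*\|\le 2D$ uniformly in $k$, since it is needed before the consensus accuracies are known to hold. We handle this by induction on $k$. Suppose the accuracies hold up to iteration $k$. Then Theorem~\ref{thm:strongly_convex_complexity} and Remark~\ref{rem:sc_lebesgue_set_bound} place the next iterates in $\mathcal{L}'(\bx^0)$, which justifies the disagreement bounds used to choose $T$ at step $k+1$.
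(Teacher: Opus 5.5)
Your route matches the paper's: the contraction bound \eqref{eq:general_contraction}, Frobenius bounds on the initial disagreement, and inversion of the targets. Your gradient and Hessian steps are fine as written. They need only $\|\bx^k-x^*\|\le D$ and $\hDe_{\hx}\le D$; the latter follows from the fourth target entry, which is at most $\muavg D/(64\Lavg_1)\le D/64$ because $\muavg\le\Lavg_1$.

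\textbf{The gap is the bound on the local iterates.} Remark~\ref{rem:sc_lebesgue_set_bound} controls only the \emph{average}, $f(\bx^k)\le f(\bx^0)+\e$, not the individual $x_i^k$. So ``$\|x_i^k-x^*\|\le D$'', and hence $\|\mX^k-\bar{\mX}^k\|_F\le 2D\sqrt{\numMach}$, is unjustified. Your closing induction (``place the next iterates in $\mathcal{L}'(\bx^0)$'') repeats the same conflation.

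Importing the device from the proof of Lemma~\ref{lem:comm_complexity} does not fix this. The recursion $\max_i f(x_i^k)\le\max_j f(x_j^{k-1})+\delta_1 D$ accumulates linearly. With $\delta_1 D\le\alpha\e/6$ it only gives a bound of order $f(\bx^0)+\tfrac{\e}{6}\log\tfrac{2(f(\bx^0)-f(x^*))}{\e}$ for $k$ near $\numIter$, which exceeds $f(\bx^0)+\e$ once the logarithm exceeds $6$.

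What works is comparing each node with the average rather than with the worst node. Combine Lemma~\ref{lem:node_descent} at $x=\hx_i^{k-1}$ (with $\gamma=1/D$), $\Lavg_1$-smoothness of $f$, $\|\hx_i^{k-1}-\bx^{k-1}\|\le\hDe_{\hx}$, and $\|\nabla f(\bx^{k-1})\|\le\Lavg_1 D$. This gives
\[
f(x_i^k)\le f(\bx^{k-1})+\delta_1 D+\Lavg_1 D\,\hDe_{\hx}+\tfrac{\Lavg_1}{2}\hDe_{\hx}^2
\le f(\bx^{k-1})+\tfrac{\alpha\e}{6}+\tfrac{\alpha\e}{24}+\tfrac{\alpha\e}{3072}.
\]
Now use the unrolled recursion of Theorem~\ref{thm:strongly_convex_theorem} at every intermediate index, not only at $\numIter$, with the additive term bounded by $\e/2$ as in the proof of Theorem~\ref{thm:strongly_convex_complexity}. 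This gives $f(\bx^{k-1})\le f(\bx^0)+\e/2$. Since $\alpha\le 1/2$, you get $f(x_i^k)\le f(\bx^0)+\e$, which is exactly what your induction needs.

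\textbf{The dropped target entries.} Separately, you cannot discard unconditionally the fourth entry of the $\hDe_{\hx}$ target or the entry $\muavg D/32$ of the gradient target. For moderate $\e$ the entry $\muavg/(64(\Lavg_1/D+\Lavg_2))$ can be the smallest one, and $\muavg\le\Lavg_1$ does nothing for its $\Lavg_2$ part. You need an explicit small-$\e$ hypothesis, e.g.\ $\alpha\e(\Lavg_1+\Lavg_2 D)\le\tfrac38\muavg\Lavg_1 D^2$. This is equivalent to the first entry being at most the fourth, so $\hDe_{\hx}\le D/64$ survives. It also implies $\alpha\e/(12D)\le\muavg D/32$. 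Without such a hypothesis, the corresponding reciprocals must be added inside the maxima. The paper's one-line proof just plugs the targets into \eqref{eq:general_contraction} and silently ignores these entries. You were right to flag this, but it must be stated as an assumption rather than ``checked''.
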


Combining the iteration complexity~\eqref{eq:sc_iteration_complexity} with the communication bounds~\eqref{eq:sc_T_complexities}, we obtain that the total communication round complexity required to reach an $\e$-solution:
\begin{equation*}
    \cO(1)
    \max\lb
        1,\,
        \left(\tfrac{LD}{\muavg}\right)^{1/2}
    \rb
    \tfrac{\tau}{\lambda}\log^2 \tfrac{1}{\e}
\end{equation*}
up to absolute constants.

Since each consensus round on Hessians requires transmitting $\cO(\dimd^2)$ scalars, the overall communication cost is
\begin{equation*}
    \cO(1)
    \max\lb
        1,\,
        \left(\tfrac{LD}{\muavg}\right)^{1/2}
    \rb
    \tfrac{\tau}{\lambda}\dimd^2  \log^2 \tfrac{1}{\e}.
\end{equation*}

\section{Accelerated Decentralized Cubic Newton}
\label{sec:acc_method}

In this section, we present the accelerated variant of the decentralized Cubic Newton method, listed as Algorithm~\ref{alg:acc_cubic}.
Unlike the basic method (Algorithm~\ref{alg:basic}), the accelerated scheme maintains three local sequences
$\{x_i^k\}_{k\ge 0}$, $\{y_i^k\}_{k\ge 0}$, and $\{v_i^k\}_{k\ge 0}$.
Here $x_i^k$ is the main iterate, obtained as the minimizer of the local cubic model, $y_i^k$ is the minimizer of a local estimating function, and
$v_i^k$ is an interpolation point at which the local cubic model is constructed.

At iteration $k$, each node first forms the interpolation point $v_i^k$, then runs consensus on $\{v_j^k\}_{j=1}^{\numMach}$ and obtains $\hv_i^k$.
Next, node $i$ computes the local derivatives at $\hv_i^k$ and uses consensus to approximate the averaged gradient and Hessian by
$\hg_i(\hv_i^k)$ and $\hH_i(\hv_i^k)$.
These quantities are then used to define a local inexact cubic model $\hnu^{k}_i(x; \hv^k_i)$. Following~\cite{agafonov2023inexact}, the accelerated cubic model contains only quadratic additional regularization:
\begin{equation}
\label{eq:model_2ord_acc}
    \begin{aligned}
        \hnu^{k}_i(x; \hv^k_i)
        \eqdef&
        \la \hg_i (\hv_i^k), x-\hv_i^k\ra
        + \tfrac12\la \hH_i(\hv_i^k) (x-\hv_i^k), x-\hv_i^k\ra \\
        &+ \tfrac{\delta_{2,k}}{2}\|x-\hv_i^k\|^2
        + \tfrac{L}{6}\|x-\hv_i^k\|^3.
    \end{aligned}
\end{equation}

After the cubic step is computed, the method performs one additional consensus procedure on the gradients
$\{\nabla f_j(x_j^{k+1})\}_{j=1}^{\numMach}$. This quantity is used to update the local estimating sequence $\psi_i^k(x)$, initialized as
\begin{equation}
    \label{eq:acc_psi1}
    \psi_i^1(x)
    \eqdef
    f(x_i^1) + \tfrac{\bk_{2,0}}{2}\|x - \hv_i^0\|^2 + \tfrac{\bk_{3,0}}{6}\|x - \hv_i^0\|^3,
\end{equation}
and updated according to
\begin{equation}
\label{eq:acc_psi_kp}
    \begin{aligned}
        \psi_{i}^{k+1}(x)
        \eqdef&~ \psi_{i}^{k}(x)
        + \tfrac{\bk_{2,k} - \bk_{2,k-1}}{2}\|x - \hv_i^0\|^2 \\
        &+ \tfrac{\bk_{3,k} - \bk_{3, k-1}}{6}\|x - \hv_i^0\|^3 \\
        &+ \tfrac{\alpha_k}{A_k}
       \left(
            f(x_i^{k+1}) + \la \hg_i(x_i^{k+1}), x - x_i^{k+1} \ra \right. \\
        \quad&+ \left. \tfrac{\muavg}{2}\|x - x_i^{k+1}\|^2
        \right),
    \end{aligned}
\end{equation}
where 
\begin{equation}
    \label{eq:As}
    A_0 = 1,
    ~~
    A_k = \textstyle{\prod}_{j=1}^k (1 - \alpha_j).
\end{equation}
The particular choice of coefficients $\alpha_k$ is specified later in the convergence analysis.

Although $\psi_i^k(x)$ explicitly contains function values, the algorithm only requires its minimizer.
Therefore, additive constants in $\psi_i^k(x)$ do not affect the actual updates.




\begin{algorithm}
  \caption{Accelerated Decentralized Cubic Newton}\label{alg:acc_cubic}
  \begin{algorithmic}[1]
    \STATE \textbf{Input:} initial local points $\{x_i^0\}_{i=1}^{\numMach}$; set $y_i^0 \eqdef x_i^0$ for all $i\in[\numMach]$; total number of iterations $\numIter$; $L > 0$, $\{\delta_{2,k}\}_{k = 0}^{\numIter}$, $\{\bk_{2, k}\}_{k = 0}^\numIter$, $\{\bk_{3, k}\}_{k=0}^\numIter$; sequences of consensus rounds $\{\Tz{v}{k}\}_{k=0}^{\numIter}$, $\{\Tgz{v}{k}\}_{k=0}^{\numIter}$, $\{\Tgz{x}{k}\}_{k=0}^{\numIter}$, $\{\THz{v}{k}\}_{k=0}^{\numIter}$; coefficients $\{\alpha_k\}_{k=0}^\numIter,~\{A_k\}_{k=0}^\numIter$; strong convexity parameter $\muavg \ge 0$.
    
    \STATE \textbf{Precomputation:} in parallel on each machine $i\in[\numMach]$.
    Set 
    \begin{equation*}
        v_i^0 \eqdef x_i^0.
    \end{equation*}
    Perform consensus on parameters
    \begin{equation*}
        \hv_i^0 \eqdef \mathrm{Consensus}(\{v_j^0\}_{j=1}^\numMach, \Tz{v}{0}).
    \end{equation*}
    On each node $i\in[\numMach]$, compute local derivatives $\nabla f_i(\hv_i^0), \nabla^2 f_i(
    \hv_i^0)$ and perform consensus on derivatives:
    \begin{equation*}
        \begin{gathered}
            \hg_i(\hv_i^0)
            \eqdef
            \mathrm{Consensus}\ls\{\nabla f_j(\hv_j^0)\}_{j=1}^{\numMach}, \Tgz{\hv}{0}\rs,\\
            \hH_i(\hv_i^0)
            \eqdef
            \mathrm{Consensus}\ls\{\nabla^2 f_j(\hv_j^0)\}_{j=1}^{\numMach}, \THz{\hv}{0}\rs.
        \end{gathered}
    \end{equation*}
    Compute:
    \begin{equation*}
        x_i^1
        =
        \argmin_{x\in\R^\dimd} \hnu^0_i(x; \hv_i^0).
    \end{equation*}
    Compute local gradients $\nabla f_i(x_i^{1})$ and perform consensus on gradients:
    \begin{equation*}
        \hg_i(x_i^1)
        \eqdef
        \mathrm{Consensus}\ls \{\nabla f_j(x_j^1)\}_{j=1}^{\numMach}, \Tgz{x}{0}\rs.
    \end{equation*}
    Initialize the estimating function by~\eqref{eq:acc_psi1} and compute its minimizer
    \begin{equation}
        y_i^1 = \argmin \limits_{x \in \R^\dimd} \psi_i^1(x). 
    \end{equation}

    \FOR{$k=1,\dots,\numIter$ in parallel on each machine $i\in[\numMach]$}

    \STATE Set
    \begin{equation}
        \label{eq:acc_vk}
        v_i^k = (1-\alpha_k)x_i^k + \alpha_k y_i^k.
    \end{equation}

    \STATE Perform consensus on parameters $v_i^k$:
        \begin{equation*}
            \hv_i^k \eqdef \mathrm{Consensus}(\{v_j^k\}_{j=1}^\numMach, \Tz{v}{k}).
        \end{equation*}

    \STATE Compute local derivatives $\nabla f_i(\hv_i^k), \nabla^2 f_i(\hv_i^k)$.
    \STATE Perform consensus on derivatives:
        \begin{equation*}
            \begin{gathered}
                \hg_i(\hv_i^k) \eqdef \mathrm{Consensus}(\{\nabla f_j(\hv_j^k)\}_{j=1}^\numMach, \Tgz{\hv}{k}),\\
                \hH_i(\hv_i^k) \eqdef \mathrm{Consensus}(\{\nabla^2 f_j(\hv_j^k)\}_{j=1}^\numMach, \THz{\hv}{k}).
            \end{gathered}
        \end{equation*}

    \STATE Make the local step:
        \begin{equation}
            \label{eq:acc_step}
                x_{i}^{k+1} = \argmin_{x\in\R^\dimd} \hnu_i^k(x; \hv_i^k),
        \end{equation}

    \STATE Compute local gradients $\nabla f_i(x_i^{k+1})$ and perform consensus on gradients:
    \begin{equation*}
        \hg_i(x_i^{k+1})
        \eqdef
        \mathrm{Consensus}\bigl(\{\nabla f_j(x_j^{k+1})\}_{j=1}^{\numMach}, \Tgz{x}{k}\bigr).
    \end{equation*}

    \STATE  Update the estimating function by~\eqref{eq:acc_psi_kp} and compute its minimizer
    \begin{equation*}
            y_i^{k+1} = \argmin \limits_{x \in \R^\dimd} \psi_{i}^{k+1}(x) 
    \end{equation*}
    \ENDFOR
  \end{algorithmic}
\end{algorithm}

\textbf{Convergence Guarantees and Proof Sketch.}
In this subsection, we focus on the strongly-convex setting and provide convergence guarantee for Accelerated Decentralized Cubic Newton (Algorithm~\ref{alg:acc_cubic}). Our analysis follows the estimating-sequence framework for accelerated second-order and high-order methods
\cite{nesterov2008accelerating,ghadimi2017second,agafonov2023inexact}.

The main goal is to establish lower and upper bounds on the local estimating sequences $\psi_i^k(x)$ defined in~\eqref{eq:acc_psi_kp}. In the exact single-node setting, estimating-sequence arguments typically yield bounds of the form~\cite{nesterov2008accelerating}
\begin{equation*}
    \begin{aligned}
        \tfrac{f(x^{k+1})}{A_k}
        \le
        \psi^{k+1}(y^{k+1})
        & \le
        \psi^{k+1}(x^*)\\
        & \le
        \tfrac{f(x^*)}{A_k}
        + \cO \ls L\|x^0-x^*\|^3 \rs,
    \end{aligned}
\end{equation*}
which imply
\begin{equation*}
    f(x^{k+1})-f(x^*)
    \le
    A_k \cO \ls L\|x^0-x^*\|^3 \rs.
\end{equation*}
Thus, the convergence rate is determined by the choice of the weights $A_k$. In the strongly-convex case, one may choose $A_k=(1-\alpha)^k$, which leads to a linear rate of the form
\begin{equation*}
    f(x^{k+1})-f(x^*)
    \le
    (1-\alpha)^k \cO \ls L\|x^0-x^*\|^3 \rs.
\end{equation*}
Using the $\mu$-strong convexity bound
\[
\|x^0-x^*\|^2 \le \tfrac{2}{\mu}\bigl(f(x^0)-f(x^*)\bigr),
\]
this can be rewritten as
\begin{equation*}
    f(x^{k+1})-f(x^*)
    \le
    (1-\alpha)^k\,\cO\!\left(
        \tfrac{L\bR}{\mu}\ls f(x^0)-f(x^*)\rs
    \right),
\end{equation*}
where $\bR$ is a bound on the distance to the solution. Our decentralized analysis recovers the same estimating-sequence mechanism at the local level, with additional error terms caused by consensus inaccuracies and disagreement between local iterates. 

At a high level, the proof proceeds in three steps. First, we establish upper and lower bounds on the local estimating sequences $\psi_i^k(x)$. Second, we control the additional consensus and disagreement errors. Third, we combine these results to obtain an intermediate linear-rate estimate with accumulated inexactness terms and then specialize it to explicit per-iteration conditions ensuring an $\e$-solution.

The detailed proofs are deferred to Appendix~\ref{app:accelerated_proofs}. In particular, Appendix~\ref{app:accelerated_proofs} first proves an intermediate estimate (Theorem~\ref{thm:acc_convergence}) with accumulated consensus and disagreement terms, and then derives the explicit sufficient conditions stated below in Theorem~\ref{thm:acc_delta_choice}.

To obtain an explicit accelerated convergence bound, we additionally assume that iterates remain in a bounded neighborhood of the solution. Assumptions of this type are standard in the analysis of accelerated globally convergent high-order methods with inexact gradients; see, e.g.,~\cite[Assumption~2]{agafonov2023inexact} and~\cite[(Opt)]{antonakopoulos2022extra}. In contrast, for related acceleration schemes with unbiased stochastic gradients, such a boundedness assumption is not required; see~\cite{agafonov2024advancing}.

\begin{assumption}
    \label{as:boundnes}
    Let $\{x_i^k, y_i^k, v_i^k, \hv_i^k\}_{k \geq 0}$ be generated by Algorithm~\ref{alg:acc_cubic}. Assume that there exists $\bR > 0$ such that for all $k \ge 1$,
    \begin{equation}
        \label{eq:boundnes}
        \max_{i\in[\numMach]}
        \Bigl\{
        \|x_i^k-x^*\|,
        \|y_i^k-x^*\|,
        \|\hat v_i^k-x^*\|,
        \|v_i^k-x^*\|
        \Bigr\}
        \le \bR.
    \end{equation}
\end{assumption}

We initialize the method from a common point $x_i^0 \eqdef x^0$ for all $i\in[\numMach]$. Since $v_i^0=x_i^0$ and consensus leaves identical inputs unchanged, we have $\hv_i^0=x^0$ for all $i\in[\numMach]$. Hence,
\begin{equation*}
        \|\hv_i^0-x^*\|=\|v_i^0-x^*\|=\|x^0-x^*\| \eqdef R.
\end{equation*}

We now state the main strongly-convex convergence result for the Algorithm~\ref{alg:acc_cubic}.

\begin{theorem}
\label{thm:acc_delta_choice}
Let Assumptions~\ref{as:strcnvxty},~\ref{as:L1},~\ref{as:L2},~\ref{as:boundnes} hold. Let $\e>0$ be desired accuracy, $\numIter$ be total number of iterations, and 
\begin{equation*}
\alpha = \alpha_k = \min \lb
    \tfrac45,\,
    \ls\tfrac{3\muavg\bR^{-1}}{160\Lavg_2}\rs^{1/3}
\rb.
\end{equation*}
Let inexactness levels be chosen for every $k=0,\dots,\numIter$ as
\begin{equation}
    \label{eq:acc_inacc_main}
    \begin{gathered}
            \hDeH{\hv}{k} \eqdef \hDe_{H \vert \hv} 
            \le
            \min \lb
                \tfrac{\muavg}{60\sqrt5 \alpha^2},
                \tfrac{\alpha\mumin\e}{320\Lavg_1\bR^2}
            \rb,
            \\
            \hDeg{\hv}{k} \eqdef \hDe_{g \vert \hv}
            \le
            \min \lb
                \tfrac{\alpha\mumin\e}{160\Lavg_1\bR},
                \tfrac{\alpha\e}{160\bR}
            \rb,
            \\
            \hDex{\hv}{k}  \eqdef \hDe_{\hv}
            \le
            \min \lb
                \tfrac{\muavg}{120\sqrt5 \alpha^2\Lavg_2},
                \tfrac{\alpha\e}{320\Lavg_1\bR}
            \rb,
            \\
            \hDeg{x}{k+1}  \eqdef \hDe_{g \vert x}
            \le
            \tfrac{\e}{8\bR}.
    \end{gathered}
\end{equation}
Let Algorithm~\ref{alg:acc_cubic} run with parameters chosen as
\begin{equation*}
    L = 3\Lavg_2,
    \quad 
    \delta_{2,k}=3\Dtwo{\hv}{k},
    \quad 
    \bk_{2,k}=\tfrac{\muavg}{2},
    \quad
    \bk_{3,k}=\tfrac{3}{2}\muavg\bR^{-1},
\end{equation*}
for all $k=0,\dots,\numIter$, Then after $\numIter + 1$ iterations with 
\begin{equation*}
    \numIter
    \ge
    \left\lceil
        \frac{\log\!\ls2C(f(\bx^0) - f(x^*))/\e\rs}{\log\ls1/(1-\alpha)\rs}
    \right\rceil,
\end{equation*}
where
\begin{equation*}
    C
    \eqdef
    \tfrac{4\Done{\hv}{0}}{\muavg R}
    + \tfrac{4\Done{\hv}{0}\bR}{\muavg R^2}
    + \tfrac{4\Dtwo{\hv}{0}}{\muavg}
    + \tfrac{1}{2}
    + \tfrac{8\Lavg_2 + 3\muavg\bR^{-1}}{6\muavg}R,
\end{equation*}
the method outputs an $\e$-solution, i.e.,
\begin{equation*}
    f(\bx^{\numIter+1})-f(x^*) \le \e.
\end{equation*}
\end{theorem}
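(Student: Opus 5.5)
The plan follows the estimating-sequence framework of \cite{nesterov2008accelerating,agafonov2023inexact}, applied node-wise and then averaged. The proof has three stages: a lower bound on $\min_x \psi_i^{k}(x)$, an upper bound on $\psi_i^{k}(x^*)$, and an unrolling of the resulting recursion with the prescribed inexactness levels.

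\textbf{Step 1 (one-step cubic progress).} I first prove an analogue of Lemma~\ref{lem:node_descent} for the accelerated model $\hnu_i^k$. Write the first-order optimality condition for \eqref{eq:acc_step}. Apply \eqref{eq:inexact_taylor_grad} with $\hv_i^k$ in place of $\hx_i^k$, and use the choices $L=3\Lavg_2$ and $\delta_{2,k}=3\Dtwo{\hv}{k}$. This should give the standard inequality
\begin{equation*}
\la \nabla f(x_i^{k+1}), \hv_i^k - x_i^{k+1}\ra \ge c\,\min\lb \tfrac{\|\nabla f(x_i^{k+1})\|^{3/2}}{\Lavg_2^{1/2}}, \tfrac{\|\nabla f(x_i^{k+1})\|^2}{\delta_{2,k}}\rb - \text{(terms in } \Done{\hv}{k}),
\end{equation*}
as in \cite[Lemma~3]{agafonov2023inexact}. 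I then replace $\nabla f(x_i^{k+1})$ by $\hg_i(x_i^{k+1})$, which costs at most $\hDe_{g\vert x}$ times $\bR$-bounded distances, using Assumption~\ref{as:boundnes}.

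\textbf{Step 2 (estimating-sequence bounds).} For the upper bound, I use strong convexity, $f(x_i^{k+1})+\la\nabla f(x_i^{k+1}),x-x_i^{k+1}\ra+\tfrac{\muavg}{2}\|x-x_i^{k+1}\|^2\le f(x)$. Substituting $\hg_i$ for the true gradient adds an error of order $\hDe_{g\vert x}\bR$ per step. By induction this yields $A_k\psi_i^{k+1}(x^*) \le f(x^*) + A_k\bigl(\tfrac{\muavg}{4}R^2+\tfrac{\muavg}{4\bR}R^3\bigr) + \sum_j(\text{errors})$, with the initial term coming from $\psi_i^1$, which involves $f(x_i^1)$ and is where the constants in $C$ depending on $\Delta_{\cdot\vert\hv,0}$ appear. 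For the lower bound, I prove inductively that $\psi_i^{k}(x)\ge f(x_i^k)/A_{k-1} - \text{err}$ for all $x$. The key ingredient is the uniform convexity of the regularizer $\tfrac{\bk_3}{6}\|x-\hv_i^0\|^3$ together with the $\tfrac{\muavg}{2}$ quadratic terms, which lets me absorb the linear term $\tfrac{\alpha}{A_k}\la \hg_i(x_i^{k+1}), x-x_i^{k+1}\ra$ via Step~1 and the relation $v_i^k=(1-\alpha)x_i^k+\alpha y_i^k$. A gap remains: $v_i^k$ differs from $\hv_i^k$ by at most $\hDe_{\hv}$ plus the disagreement, and this produces terms of the form $\Lavg_1\hDe_{\hv}\bR$. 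The choice $\alpha^3\le \tfrac{3\muavg\bR^{-1}}{160\Lavg_2}$ is what makes the cubic balancing $\alpha^3 \|\cdot\|^3/A_k$ versus $\bk_3$ work. The conditions $\hDe_{H\vert\hv}\le \muavg/(60\sqrt5\alpha^2)$ and $\hDe_{\hv}\le\muavg/(120\sqrt5\alpha^2\Lavg_2)$ ensure that the quadratic regularization $\delta_{2,k}$ is dominated by the $\muavg$-quadratic part in the corresponding balancing.

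\textbf{Step 3 (combine).} Combining the two bounds gives $f(x_i^{k+1})-f(x^*)\le A_k\, C'(f(\bx^0)-f(x^*)) + \sum_j \tfrac{A_k}{A_j}\,\mathrm{err}_j$, using $R^2\le \tfrac{2}{\muavg}(f(x^0)-f(x^*))$ to obtain the constant $C$. With $A_k=(1-\alpha)^k$, the geometric sum satisfies $\sum_j (1-\alpha)^{k-j}\le 1/\alpha$. The inexactness conditions \eqref{eq:acc_inacc_main}, which carry factors of $\alpha\e$, then make the accumulated error at most $\e/2$, and the choice of $\numIter$ makes the first term at most $\e/2$. Finally, Jensen's inequality gives $f(\bx^{\numIter+1})\le\tfrac1m\sum_i f(x_i^{\numIter+1})$.

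The main obstacle is the lower bound in Step~2. There one must show that the three consensus discrepancies ($v$ versus $\hv$, true versus consensus gradients at $x^{k+1}$, and the different minimizers $y_i$ across nodes) are each dominated by the $\muavg$- and $\bk_3$-regularization. The term $\mumin$ (the minimal $\mu_i$) presumably enters here, through local strong convexity of $\psi_i$, which is needed to bound $\|y_i^{k+1}-\cdot\|$ perturbations. I would first attempt this by tracking a single inequality $\psi_i^{k+1}(x)\ge \tfrac{f(x_i^{k+1})}{A_k}-E_k$ and computing the exact error increment $E_{k+1}-E_k$.
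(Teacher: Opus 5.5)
In the paper, this theorem is just a parameter check on top of Theorem~\ref{thm:acc_convergence}; the substance lies in the lemmas feeding that theorem, and that is where your plan has a gap. Your single-node skeleton is right, and so is your reading of the two balancing conditions: $\alpha^3\le 3\muavg\bR^{-1}/(160\Lavg_2)$ handles the cubic term, and the bounds on $\hDeH{\hv}{k}$ and $\hDex{\hv}{k}$ force $\Dtwo{\hv}{k}\le\muavg/(30\sqrt5\alpha^2)$ for the quadratic term.

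What you miss is the decentralized ingredient that makes the remaining errors small, and two of your error estimates are wrong as stated.
\begin{itemize}
\item First, $\hg_i(x_i^{k+1})$ approximates $\frac{1}{\numMach}\sum_j\nabla f_j(x_j^{k+1})$, an average of gradients taken at \emph{different} points. Replacing $\nabla f(x_i^{k+1})$ by it therefore costs $\hDeg{x}{k+1}+\Lavg_1\Delta^{\mathrm{raw}}_{x,k+1}$, where $\Delta^{\mathrm{raw}}_{x,k+1}=\max_{i,l}\|x_i^{k+1}-x_l^{k+1}\|$, not just $\hDeg{x}{k+1}$.
\item Second, and more seriously, $\|v_i^k-\hv_i^k\|$ is not of order $\hDex{\hv}{k}$. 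The point $\hv_i^k$ is close to $\bv^k$, but $v_i^k$ is the raw pre-consensus point. Its distance to $\bv^k$ involves the spread of the minimizers $y_i^k$ of the different $\psi_i^k$, which no hypothesis controls. So $\frac{1}{A_k}\langle\nabla f(x_i^{k+1}),v_i^k-\hv_i^k\rangle$ cannot be bounded node by node with Jensen applied only at the end. A natural attempt to control the $y_i$-spread via strong convexity of $\psi_i^k$ picks up a contribution of order $\Lavg_1\bR\,\hDeg{x}{k+1}/\muavg$. Under the stated tolerance $\hDeg{x}{k+1}\le\varepsilon/(8\bR)$ this is about $\Lavg_1\varepsilon/\muavg$, not $O(\varepsilon)$.
\end{itemize}

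The paper avoids both problems. It averages the two-sided estimating-sequence bound over nodes \emph{before} estimating this term. Consensus preserves the mean, so $\sum_i(\hv_i^j-v_i^j)=0$, and one may subtract $\nabla f(\bx^{j+1})$ inside the inner product. This gives $\frac{1}{\numMach}\sum_i\langle\nabla f(x_i^{j+1}),\hv_i^j-v_i^j\rangle\le 2\Lavg_1\bR\,\Delta^{\mathrm{raw}}_{x,j+1}$ (Lemma~\ref{lem:err_v}), and the $y_i$-spread never appears.

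Both error terms then reduce to the one-step disagreement $\Delta^{\mathrm{raw}}_{x,k+1}$. Lemma~\ref{lem:raw_dissimilarity} bounds it by subtracting the optimality conditions of the cubic subproblems at two nodes and using strong monotonicity. Specifically, $\hH_i(\hv_i^k)+\delta_{2,k}I\succeq(\mumin+\delta_{2,k})I$, because the consensus Hessian is a convex combination of local Hessians $\succeq\mu_jI$, and $s\mapsto\|s\|s$ is monotone. The result is $\Delta^{\mathrm{raw}}_{x,k+1}\le\frac{2}{\mumin}\bigl(2\hDeH{\hv}{k}\bR+\hDeg{\hv}{k}\bigr)+2\hDex{\hv}{k}$.

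This is where $\mumin$ enters, and it is why the tolerances on $\hDeH{\hv}{k}$ and $\hDeg{\hv}{k}$ carry the factor $\alpha\mumin\varepsilon/\Lavg_1$. It does not enter through strong convexity of $\psi_i$, which is governed by $\muavg$. Without this disagreement bound and the mean-zero averaging step, your Step~2 does not close under the stated tolerances.
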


Thus, if the consensus inaccuracies are controlled according to~\eqref{eq:acc_inacc_main}, Theorem~\ref{thm:acc_delta_choice} yields the iteration complexity
\begin{equation}
    \label{eq:acc_complexity}
    \numIter
    =
    \cO(1)
    \max\lb
        1,\,
        \left(\tfrac{L\bR}{\muavg}\right)^{1/3}
    \rb
    \log \tfrac{f(\bx^0)-f(x^*)}{\e}
\end{equation}
Hence, under~\eqref{eq:acc_inacc_main}, the decentralized accelerated method achieves, up to absolute constants, the same iteration complexity as the exact accelerated cubic Newton method. It remains to estimate how many consensus rounds are sufficient to guarantee~\eqref{eq:acc_inacc_main}; we do this in the next lemma.
\begin{lemma}
\label{lem:acc_comm_complexity}
    Suppose Assumptions~\ref{as:strcnvxty},~\ref{assum:mixing_matrix_sequence},~\ref{as:L1},~\ref{as:L2},~\ref{as:boundnes} hold. To guarantee the target consensus accuracies for points, gradients, and Hessians defined in~\eqref{eq:acc_inacc_main}, it is sufficient to perform the following number of consensus steps at each iteration $k = 0, \ldots, N$:
    \begin{equation}
    \label{eq:acc_T_complexities}
    \begin{aligned}
        \Tz{v}{k} 
        & \geq 
        \tfrac{\tau}{\lambda} \log 
            \ls 
                2\bR\sqrt{\numMach} 
                \max 
                    \lb 
                        \tfrac{120\sqrt{5} \alpha^2\Lavg_2}{\muavg}, 
                        \tfrac{320\Lavg_1\bR}{\alpha\e}
                    \rb 
            \rs, 
        \\
        \Tgz{\hv}{k} 
        & \geq 
        \tfrac{\tau}{\lambda} \log 
            \left( 
                \sqrt{\numMach} 
                    \ls 
                        \zeta_g + 2\Lmax_1 \bR 
                    \rs  
            \right. 
        \\
        & \qquad \qquad \qquad 
                \left. 
                    \times \max 
                        \lb 
                            \tfrac{160\Lavg_1\bR}{\alpha\mumin\e}, 
                            \tfrac{160\bR}{\alpha\e} 
                        \rb 
                    \right),
        \\
        \THz{\hv}{k} 
        & \geq 
        \tfrac{\tau}{\lambda} \log 
            \left( 
                \sqrt{\numMach} 
                    \ls 
                        \zeta_H + 2\Lmax_2\sqrt{\dimd} \bR 
                    \rs 
            \right.
        \\
        & \qquad \qquad \qquad 
            \left. 
                \times \max 
                        \lb 
                            \tfrac{60\sqrt{5} \alpha^2}{\muavg}, 
                            \tfrac{320\Lavg_1\bR^2}{\alpha\mumin\e} 
                        \rb 
            \right), 
        \\
        \Tgz{x}{k} 
        & \geq \tfrac{\tau}{\lambda} \log 
            \ls 
                \tfrac{8\bR\sqrt{\numMach} 
                    \ls 
                        \zeta_g + 2\Lmax_1 \bR 
                    \rs}{\e} 
            \rs,
    \end{aligned}
    \end{equation}
    where for brevity we define 
    \begin{gather*}
        \zeta_g \eqdef \sqrt{\tfrac{1}{\numMach} \sum_{i=1}^\numMach \|\nabla f_i(x^*)\|^2}, \\
        \zeta_H \eqdef \sqrt{\tfrac{1}{\numMach} \sum_{i=1}^\numMach \|\nabla^2 f_i(x^*) - \nabla^2 f(x^*)\|_F^2}.
    \end{gather*}
\end{lemma}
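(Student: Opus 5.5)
The plan is to use the geometric contraction of consensus implied by the contraction property in Assumption~\ref{assum:mixing_matrix_sequence}. After $T$ rounds we have $\|\hat{\mU}-\bar{\mU}\|_F \le (1-\lambda)^{\lfloor T/\tau\rfloor}\|\mU-\bar{\mU}\|_F \le e^{-\lambda\lfloor T/\tau\rfloor}\|\mU-\bar{\mU}\|_F$. This follows by grouping the product of mixing matrices into consecutive blocks of length $\tau$, and by noting that each $W^k$ is doubly stochastic: it preserves the average and does not increase $\|\mU-\bar{\mU}\|_F$, so the leftover factors are harmless. Since $\max_i\|\hu_i-\bar u\|\le\|\hat{\mU}-\bar{\mU}\|_F$, and the same holds for the operator norm of a flattened matrix, which is bounded by its Frobenius norm, it suffices to take
\[
T\ge \tfrac{\tau}{\lambda}\log\bigl(\|\mU-\bar{\mU}\|_F/\varepsilon_{\mathrm{target}}\bigr)
\]
for each of the four quantities. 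What remains is to bound the initial dispersion $\|\mU-\bar{\mU}\|_F$ in each case in terms of $\bR$, $\zeta_g$, $\zeta_H$, and then plug in the reciprocal of the smallest target from \eqref{eq:acc_inacc_main}, which produces the max in each logarithm.

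\emph{Points.} For $\{v_j^k\}$ we use Assumption~\ref{as:boundnes} and the fact that $\|\mU-\bar{\mU}\|_F\le\|\mU-\mathbf 1 (x^*)^\top\|_F$, since the average is the Frobenius projection onto consensus matrices. This gives $\|\mU-\bar{\mU}\|_F\le\sqrt{\numMach}\,\bR\le 2\bR\sqrt{\numMach}$. Inverting the two targets for $\hDe_{\hv}$ yields the bound on $\Tz{v}{k}$.

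\emph{Gradients.} For $\{\nabla f_j(\hv_j^k)\}$, projecting again onto consensus matrices gives
\[
\|\mU-\bar{\mU}\|_F\le\Bigl(\tsum_j\|\nabla f_j(\hv_j^k)\|^2\Bigr)^{1/2}.
\]
We split $\nabla f_j(\hv_j^k)=\nabla f_j(x^*)+(\nabla f_j(\hv_j^k)-\nabla f_j(x^*))$ and use Assumption~\ref{as:L1}, $\|\hv_j^k-x^*\|\le\bR$, and the triangle inequality in $\ell_2$ over nodes. This gives at most $\sqrt{\numMach}(\zeta_g+\Lmax_1\bR)$, which is within the stated $\zeta_g+2\Lmax_1\bR$. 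The same argument applies to $\{\nabla f_j(x_j^{k+1})\}$ using $\|x_j^{k+1}-x^*\|\le\bR$, which gives $\Tgz{x}{k}$ with target $\e/(8\bR)$.

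\emph{Hessians.} We subtract $\nabla^2 f(x^*)$ rather than zero, which is allowed since $\bar{\mU}$ is the projection. Then
\[
\|\nabla^2 f_j(\hv_j^k)-\nabla^2 f(x^*)\|_F\le\|\nabla^2 f_j(x^*)-\nabla^2 f(x^*)\|_F+\sqrt{\dimd}\,L_{2,j}\bR,
\]
using $\|\cdot\|_F\le\sqrt{\dimd}\opnorm{\cdot}$ and Assumption~\ref{as:L2}. Summing over nodes gives $\sqrt{\numMach}(\zeta_H+\Lmax_2\sqrt{\dimd}\bR)$.

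The only delicate step is the first one: handling $T$ not a multiple of $\tau$, and making sure the contraction applies starting at the relevant time index $k\ge\tau-1$. The non-expansiveness of each doubly stochastic factor in the averaged subspace should handle both issues. It also absorbs the floor, possibly at the cost of the slack already present in the constants (the factor $2$). The rest is bookkeeping of the constants in \eqref{eq:acc_inacc_main}.
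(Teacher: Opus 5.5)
Your proposal follows the paper's route: the contraction bound \eqref{eq:general_contraction}, a bound on the initial dispersion $\|\mU-\bar{\mU}\|_F$ of each of the four inputs via Assumption~\ref{as:boundnes}, and inversion of the targets in \eqref{eq:acc_inacc_main}. The main difference is in the dispersion step. You use that $\bar{\mU}$ is the Frobenius projection onto consensus matrices, i.e.\ $\|\mU-\bar{\mU}\|_F\le\|\mU-\mathbf{1}_\numMach c^\top\|_F$ for every $c$, instead of the paper's row-wise triangle inequality through $x^*$. This gives $\sqrt{\numMach}\,\bR$, $\sqrt{\numMach}(\zeta_g+\Lmax_1\bR)$ and $\sqrt{\numMach}(\zeta_H+\Lmax_2\sqrt{\dimd}\,\bR)$, which lie inside the stated constants.

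One claim in your last paragraph does not hold as stated: the slack in the constants does not, in general, absorb the floor. Let $S$ be the stated dispersion constant, $B\le S$ your sharper one, and $T\ge\tfrac{\tau}{\lambda}\log(S/\Delta)$. The bound $\lfloor T/\tau\rfloor\ge (T-\tau+1)/\tau$ only gives
\begin{equation*}
(1-\lambda)^{\lfloor T/\tau\rfloor}B\le e^{\lambda(\tau-1)/\tau}\,\tfrac{B}{S}\,\Delta ,
\end{equation*}
so the estimate closes only if $S/B\ge e^{\lambda(\tau-1)/\tau}$. For the points, $S/B=2$, which suffices only when $\lambda(\tau-1)/\tau\le\ln 2$. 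For the gradients, $S/B=(\zeta_g+2\Lmax_1\bR)/(\zeta_g+\Lmax_1\bR)$, and the Hessian ratio is analogous; both tend to $1$ when the heterogeneity term dominates. For $\tau=1$ there is nothing to absorb.

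The paper sidesteps this only by writing $(1-\lambda)^{T/\tau}$ in \eqref{eq:general_contraction}. A rigorous fix is to round $T$ up to a multiple of $\tau$ (equivalently, add $\tau$ rounds), which does not change the complexity.

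Finally, at $k=0$ your gradient and Hessian bounds use $\|\hv_j^0-x^*\|=R\le\bR$. Assumption~\ref{as:boundnes} is stated only for $k\ge1$, so it does not literally provide this; the paper relies on the same tacit assumption.
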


Combining the iteration complexity~\eqref{eq:acc_complexity} with the communication bounds~\eqref{eq:acc_T_complexities}, we obtain that the total communication round complexity required to reach an $\e$-solution is
\begin{equation*}
    \cO(1)
    \max\lb
        1,\,
        \left(\tfrac{L\bR}{\muavg}\right)^{1/3}
    \rb
    \tfrac{\tau}{\lambda}\log^2 \tfrac{1}{\e}
\end{equation*}
up to absolute constants. 

Consequently, the total communication cost, measured in the number of transmitted scalar values, is given by
\begin{equation*}
    \cO(1)
    \max\lb
        1,\,
        \left(\tfrac{L\bR}{\muavg}\right)^{1/3}
    \rb
    \tfrac{\tau}{\lambda}\dimd^2  \log^2 \tfrac{1}{\e}.
\end{equation*}

This quadratic dependence on the dimension $\dimd$ is due to the transmission of Hessian matrices. This observation motivates the design of more communication-efficient implementations, in particular via structured or compressed Hessian representations. Such improvements are possible, for example, in the case of generalized linear models, which we discuss in the next section.

\section{Implementation Aspects for Generalized Linear Models}
\label{sec:implementation}

The main limitation of proposed Algorithms \ref{alg:basic} and \ref{alg:acc_cubic} is the exchange of Hessians between the nodes of the network. Exchanging matrices incurs a high communication cost. A possible way to overcome communication limitations is running the method on loss functions of generalized linear models \cite{NL2021}. Consider a problem of decentralized training. Each node locally holds a dataset characterized by $\braces{a_{ij}}_{j=1}^\ell$, where $a_{ij}\in\R^d$ are feature vectors. Let $x\in\R^d$ denote model weights and let local functions have form $f_i(x) = \sum_{j=1}^\ell f_{ij}(x)$ and $f_{ij}(x) = \varphi_{ij}(a_{ij}^\top x)$. Matrices of second derivatives are computed as $\nabla^2 f_{ij}(x) = \varphi_{ij}''(a_{ij}^\top x) a_{ij} a_{ij}^\top$. Therefore, it is sufficient to once communicate datasets $\braces{a_{ij}}_{j=1}^\ell$ to the neighboring nodes and then only exchange vectors $h_i(x) = (\varphi_{i1}''(a_{i1}^\top x) \ldots \varphi_{i\ell}''(a_{i\ell}^\top x))$. Therefore, the cost of a single communication will be reduced from $\cO(d^2)$ to $\cO(\ell)$. Moreover, work \cite{NL2021} also proposed to exchange compressed vectors $\mathcal{C}(h_i(x))$, where the compression operator leaves only several nonzero components of $h_i(x)$ and zeroes out all the rest entries. Using compressed information exchange additionally decreases communication complexity.

\section{Conclusion and Discussion}

In this work, we studied consensus-based decentralized second-order optimization and proposed Cubic Regularized Newton methods for both convex and strongly convex settings. Our analysis explicitly tracks the inexactness arising from consensus errors and disagreement between local iterates, and shows that these effects can be controlled to recover the iteration complexity of exact Cubic Newton methods up to logarithmic communication overhead.

We deliberately do not focus on communication efficiency and restrict our attention to a relatively simple setting of generalized linear models, which allows us to highlight the core algorithmic and analytical ideas without additional technical overhead.

Several directions remain open. First, it would be interesting to develop communication-efficient extensions based on Hessian compression, low-rank or quasi-Newton surrogates, and lazy or randomized Hessian updates. Second, our accelerated analysis relies on a boundedness assumption on the iterates; removing this assumption would further strengthen the theory. Finally, it would be natural to extend the present framework beyond the deterministic convex setting, including stochastic objectives, nonconvex problems, and broader classes of decentralized second-order methods.

\bibliographystyle{IEEEtran}
\bibliography{agafonov}



\appendices
\section{\break Notation}
\label{app:notation}
This appendix collects the recurring notation used throughout the paper. Table~\ref{tab:notation_summary} extends Table~\ref{tab:basic_notation} by including the accelerated method, communication-complexity quantities, and auxiliary symbols that appear repeatedly in the appendix proofs. One-off constants introduced only inside a single theorem or proof are defined locally where they first appear.

\begin{table*}[!t]
\caption{Complete notation summary used throughout the paper.}
\label{tab:notation_summary}
\centering
\scriptsize
\setlength{\tabcolsep}{4pt}
\renewcommand{\arraystretch}{1.08}
\begin{tabular}{@{}
>{\raggedright\arraybackslash}p{0.17\textwidth}
>{\raggedright\arraybackslash}p{0.34\textwidth}
>{\raggedright\arraybackslash}p{0.41\textwidth}
@{}}
\toprule
Symbol & Formal definition & Description \\
\midrule
\multicolumn{3}{@{}l}{\textbf{Problem, network, and global constants}} \\
$\numMach$, $\dimd$ & --- & Number of nodes and dimension. \\
$\mathcal{G}^k$, $\mW^k$, $\mW_\tau^k$ & $\mathcal{G}^k = (\mathcal{V}, \mathcal{E}^k)$, $\mW_\tau^k \eqdef \mW^k \mW^{k-1} \cdots \mW^{k-\tau+1}$ & Communication graph, mixing matrix, and $\tau$-step mixing product used by the consensus routine. \\ 
$\lambda$, $\tau$ & see Assumption~\ref{assum:mixing_matrix_sequence} & Contraction parameters of the consensus process. \\
$f_i$, $f$, $x^*$ & $f(x) \eqdef \tfrac{1}{\numMach}\tsum_{i=1}^{\numMach} f_i(x)$,~ $x^* = \argmin_x f(x)$ & Local objectives, global objective, and global minimizer. \\
$L_{1,i}$, $L_{2,i}$, $\Lavg_1$, $\Lavg_2$, $\Lmax_1$, $\Lmax_2$ & $\Lavg_p \eqdef \tfrac{1}{\numMach}\tsum_{i=1}^{\numMach} L_{p,i}$, $\Lmax_p \eqdef \max_{i\in[\numMach]} L_{p,i}$ & Local, averaged, and maximal smoothness constants for gradients and Hessians. \\
$\mu_i$, $\muavg$, $\mumin$ & $\muavg \eqdef \tfrac{1}{\numMach}\tsum_{i=1}^{\numMach} \mu_i$, $\mumin \eqdef \min_{i\in[\numMach]} \mu_i$ & Local, averaged, and minimal strong-convexity constants. \\
$D$, $R$, $\bR$ & see~\eqref{eq:lebesgue_set_diameter} and Assumption~\ref{as:boundnes} & Radius bounds used in the non-accelerated and accelerated analyses. \\
$\zeta_g$, $\zeta_H$ & $\zeta_g \eqdef \sqrt{\tfrac{1}{\numMach} \sum_{i=1}^\numMach \|\nabla f_i(x^*)\|^2}$ and $\zeta_H \eqdef \sqrt{\tfrac{1}{\numMach} \sum_{i=1}^\numMach \|\nabla^2 f_i(x^*) - \nabla^2 f(x^*)\|_F^2}$ & Gradient and Hessian heterogeneity constants at the optimum. \\
\midrule
\multicolumn{3}{@{}l}{\textbf{Iterates, averages, and local models}} \\
$x_i^k$, $\bx^k$, $\hx_i^k$ & $\bx^k \eqdef \tfrac{1}{\numMach}\tsum_{i=1}^{\numMach} x_i^k$, $\hx_i^k \eqdef \mathrm{Consensus}(\{x_j^k\}_{j=1}^{\numMach}, \Tx{k})$ & Local iterate, exact network average, and approximate averaged iterate. \\
$y_i^k$, $v_i^k$, $\hv_i^k$, $\bv^k$ & $v_i^k = (1-\alpha_k)x_i^k + \alpha_k y_i^k$, $\hv_i^k \eqdef \mathrm{Consensus}(\{v_j^k\}_{j=1}^{\numMach}, \Tz{v}{k})$, $\bv^k \eqdef \tfrac{1}{\numMach}\tsum_{i=1}^{\numMach} v_i^k$ & Auxiliary iterates of the accelerated method, their consensus approximation, and the corresponding exact network average. \\
$\hg_i(\hx_i^k)$, $\hH_i(\hx_i^k)$ & consensus outputs for $\{\nabla f_j(\hx_j^k)\}_{j=1}^{\numMach}$ and $\{\nabla^2 f_j(\hx_j^k)\}_{j=1}^{\numMach}$ & Approximate averaged gradient and Hessian in the basic method. \\
$\hg_i(\hv_i^k)$, $\hH_i(\hv_i^k)$, $\hg_i(x_i^{k+1})$ & consensus outputs for $\{\nabla f_j(\hv_j^k)\}_{j=1}^{\numMach}$, $\{\nabla^2 f_j(\hv_j^k)\}_{j=1}^{\numMach}$, and $\{\nabla f_j(x_j^{k+1})\}_{j=1}^{\numMach}$ & Approximate averaged derivatives in the accelerated method. \\
$\phi_i(y;x)$, $\phi(y;x)$ & second-order Taylor polynomials of $f_i$ and $f$ at $x$ & Exact local and global Taylor models. \\
$\hphi_i^k(x;\hx_i^k)$, $\hphi_i^k(x;\hv_i^k)$ & built from consensus derivatives at $\hx_i^k$ or $\hv_i^k$ & Inexact Taylor approximations used in the basic and accelerated analyses. \\
$\home_i^k(x;\hx_i^k)$, $\hnu_i^k(x;\hv_i^k)$, $\psi_i^k(x)$ & see~\eqref{eq:model_2ord},~\eqref{eq:model_2ord_acc},~\eqref{eq:acc_psi1}, and~\eqref{eq:acc_psi_kp} & Basic and accelerated inexact cubic models, and the accelerated estimating sequence. \\
$s_i^k$, $M_i^k$ & $s_i^k \eqdef x_i^{k+1}-\hv_i^k$, $M_i^k \eqdef \hH_i(\hv_i^k) + \delta_{2,k} I$ & Auxiliary proof quantities used in the accelerated analysis. \\
\midrule
\multicolumn{3}{@{}l}{\textbf{Consensus counters, errors, and algorithmic parameters}} \\
$\Tx{k}$, $\Tg{k}$, $\THh{k}$ & --- & Numbers of consensus rounds for iterates, gradients, and Hessians in Algorithm~\ref{alg:basic}. \\
$\Tz{
\hv}{k}$, $\Tgz{\hv}{k}$, $\Tgz{x}{k}$, $\THz{v}{k}$ & --- & Numbers of consensus rounds for $v$-iterates, gradients at $\hv_i^k$, gradients at $x_i^{k+1}$, and Hessians at $\hv_i^k$ in Algorithm~\ref{alg:acc_cubic}. \\
$\hDex{\hx}{k}$, $\hDeg{\hx}{k}$, $\hDeH{\hx}{k}$ & see~\eqref{eq:cons_bounds_x} and~\eqref{eq:cons_bounds_derivatives} & Consensus errors for points, gradients, and Hessians at $\hx_i^k$. \\
$\hDex{\hv}{k}$, $\hDeg{\hv}{k}$, $\hDeH{\hv}{k}$, $\hDeg{x}{k+1}$ & see~\eqref{eq:cons_bounds_derivatives} and the accelerated appendix bounds & Consensus errors for points $\hv_i^k$ and for gradients at $x_i^{k+1}$. \\
$\Done{\hx}{k}$, $\Dtwo{\hx}{k}$, $\Done{\hv}{k}$, $\Dtwo{\hv}{k}$, $\Dtwoo{\hv}$ & $\Done{z}{k} \eqdef \hDeg{z}{k} + 2\Lavg_1\hDex{z}{k}$, $\Dtwo{z}{k} \eqdef \hDeH{z}{k} + 2\Lavg_2\hDex{z}{k}$, $\Dtwoo{\hv} \eqdef \max_{0 \le k \le \numIter}\Dtwo{\hv}{k}$ & Aggregated first- and second-order Taylor-model errors and their maximal accelerated version. \\
$\Delta_{x,k}^{\mathrm{raw}}$, $\econs_k$, $\econs$ & $\Delta_{x,k}^{\mathrm{raw}} \eqdef \max_{i,j\in[\numMach]}\|x_i^k-x_j^k\|$ & Raw disagreement measure and the consensus-induced additive error terms used in the non-accelerated descent bounds. \\
$\delta_{1,k}$, $\delta_{2,k}$, $L$, $\gamma$ & --- & Smoothing and regularization parameters in the local cubic models. \\
$\alpha_k$, $A_k$, $\bk_{2,k}$, $\bk_{3,k}$, $\numIter$ & $A_0 = 1$, $A_k = \prod_{j=1}^{k}(1-\alpha_j)$ & Accelerated weights, estimating-sequence coefficients, and the total number of outer iterations. \\
\bottomrule
\end{tabular}
\end{table*}

\section{\break Proofs}
\label{app:proofs}

\subsection{Proofs for the Decentralized Cubic Newton}
\label{app:decentralized_cubic_newton_proofs}

\begin{customlemma}{\ref{lem:inexact_taylor}}
Let Assumptions~\ref{as:L1},~\ref{as:L2} hold. Let $\hg_i(\hx_i^k)$ and $\hH_i(\hx_i^k)$ be the local gradient and Hessian approximations obtained after the consensus procedure at node $i \in [\numMach]$, with consensus errors defined in~\eqref{eq:cons_bounds_x},~\eqref{eq:cons_bounds_derivatives} applied at $z_i^k = \hx_i^k$. Define the aggregated consensus errors 
\begin{equation}
    \label{eq:agg_deltas_app}
    \begin{aligned}
       \Done{\hx}{k} &\eqdef \hDeg{\hx}{k} + 2\Lavg_1\hDex{\hx}{k}, \\
       \Dtwo{\hx}{k} &\eqdef \hDeH{\hx}{k} + 2\Lavg_2\hDex{\hx}{k}.
    \end{aligned}
\end{equation}
Then, for any $x\in\R^\dimd$,
\begin{equation}
    \label{eq:inexact_taylor_value_app}
    \begin{aligned}
        \bigl| f(x) - \hphi_i^k(x; \hx_i^k) \bigr| \le~ &\Done{\hx}{k}\|x-\hx_i^k\| + \tfrac{\Dtwo{\hx}{k}}{2} \|x-\hx_i^k\|^2\\
        & + \tfrac{\Lavg_2}{6}\|x-\hx_i^k\|^3  .
    \end{aligned}
\end{equation}
Moreover,
\begin{equation}
\label{eq:inexact_taylor_grad_app}
\begin{aligned}
    \|\nabla f(x) -  \nabla \hphi_i^k(x; \hx_i^k)\|
    \le~&\Done{\hx}{k} +\Dtwo{\hx}{k} \|x-\hx_i^k\| \\
    &+ \tfrac{\Lavg_2}{2}\|x-\hx_i^k\|^2
\end{aligned}
\end{equation}
\end{customlemma}

\begin{proof}
    Denote $h_i^k \eqdef x-\hx_i^k$.
    By~\eqref{eq:taylor_bound_global},
    \begin{equation}
    \label{eq:global_taylor_at_hx}
        \bigl|f(x) - \phi(x;\hx_i^k)\bigr| \le \tfrac{\Lavg_2}{6}\|h_i^k\|^3.
    \end{equation}
    Let us denote 
    \begin{equation*}
        \bar g_x^k \eqdef \tfrac{1}{\numMach}\tsum_{j=1}^{\numMach}\nabla f_j(\hx_j^k),
        \qquad
        \bar H_x^k \eqdef \tfrac{1}{\numMach}\tsum_{j=1}^{\numMach}\nabla^2 f_j(\hx_j^k).
    \end{equation*}
    Next, add and subtract the average derivatives:
    \begin{equation*}
        \begin{aligned}
            f(x)  - & \hphi_i^k(x; \hx_i^k)  \\
            \stackrel{\eqref{eq:inexact_taylor_2ord}}{=} &~ f(x) - \ls f(\hx_i^k) 
            + \la \hg_i(\hx_i^k), h_i^k\ra 
            + \tfrac12\la \hH_i(\hx_i^k) h_i^k, h_i^k\ra\rs \\
            =&~f(x)-\phi(x;\hx_i^k)
            + \la \nabla f(\hx_i^k)-\hg_i(\hx_i^k), h_i^k\ra \\
            & + \tfrac12\la \ls\nabla^2 f(\hx_i^k)
            - \hH_i(\hx_i^k)\rs h_i^k, h_i^k\ra \\
            =&~ f(x)-\phi(x;\hx_i^k) \\
            & + \underbrace{\la \nabla f(\hx_i^k)-\bar g_x^k, h_i^k\ra
            + \tfrac12\la \ls\nabla^2 f(\hx_i^k)-\bar H_x^k\rs h_i^k, h_i^k\ra}_{\text{dissimilarity due to disagreement in }\hx_i^k}\\
            & + \underbrace{\la \bar g_x^k-\hg_i(\hx_i^k), h_i^k\ra 
            + \tfrac12\la \ls\bar H_x^k-\hH_i(\hx_i^k)\rs h_i^k, h_i^k\ra}_{\text{consensus errors in derivatives}}.
        \end{aligned}
    \end{equation*}
    By the consensus error bounds~\eqref{eq:cons_bounds_derivatives},
    \begin{equation}
        \label{eq:cons_bounds_in_lemma}
        \begin{gathered}
            \bigl|\la \bar g_x^k-\hg_i(\hx_i^k), h_i^k\ra\bigr| \le \hDeg{\hx}{k}\|h_i^k\| \\
            ~
            \tfrac12\bigl|\la \ls\bar H_x^k-\hH_i(\hx_i^k)\rs h_i^k, h_i^k\ra\bigr|
            \le \tfrac{\hDeH{\hx}{k}}{2}\|h_i^k\|^2.
        \end{gathered}
    \end{equation}
    For the disagreement terms, note that
    \begin{equation*}
        \begin{gathered}
            \nabla f(\hx_i^k) - \bar g_x^k
            = \tfrac{1}{\numMach}\tsum_{j=1}^\numMach \ls\nabla f_j(\hx_i^k)-\nabla f_j(\hx_j^k)\rs \\
            \nabla^2 f(\hx_i^k) - \bar H_x^k
            = \tfrac{1}{\numMach} \tsum_{j=1}^\numMach \ls\nabla^2 f_j(\hx_i^k)-\nabla^2 f_j(\hx_j^k)\rs.
        \end{gathered}
    \end{equation*}
    Using Assumption~\ref{as:L1} and $\|\hx_i^k-\hx_j^k\|\le 2\hDex{\hx}{k}$ by~\eqref{eq:cons_bounds_x}, we obtain
    \begin{equation}
    \label{eq:grad_disagreement}
    \begin{aligned}
        \|\nabla f(\hx_i^k)-\bar g_x^k\|
        &\le \tfrac{1}{\numMach}\tsum_{j=1}^\numMach L_{1,j}\|\hx_i^k-\hx_j^k\| \\
        &\le \tfrac{1}{\numMach}\tsum_{j=1}^\numMach 2L_{1,j}\hDex{\hx}{k}
        \le 2\Lavg_1\hDex{\hx}{k}.
    \end{aligned}
    \end{equation}
    Similarly, by Assumption~\ref{as:L2},
    \begin{equation}
    \label{eq:hess_disagreement}
    \begin{aligned}
        \opnorm{\nabla^2 f(\hx_i^k)-\bar H_x^k}
        &\le \tfrac{1}{\numMach}\tsum_{j=1}^\numMach L_{2,j}\|\hx_i^k-\hx_j^k\| \\
        & \le \tfrac{1}{\numMach}\tsum_{j=1}^\numMach 2L_{2,j}\hDex{\hx}{k}
        = 2\Lavg_2\hDex{\hx}{k}.
    \end{aligned}
    \end{equation}
    Combining~\eqref{eq:global_taylor_at_hx}, \eqref{eq:cons_bounds_in_lemma}, \eqref{eq:grad_disagreement}, and~\eqref{eq:hess_disagreement} yields~\eqref{eq:inexact_taylor_value_app}.
    For~\eqref{eq:inexact_taylor_grad_app}, add and subtract exact derivatives at $\hx_i^k$:
    \begin{equation*}
    \begin{aligned}
        \|  \nabla f(x) - \nabla \hphi_i^k  ( x;  \hx_i^k)  \| 
        & =    \|\nabla f(x) - \ls\hg_i(\hx_i^k) + \hH_i(\hx_i^k) h_i^k\rs\| \\
        \le~ & \|\nabla f(x) - \ls\nabla f(\hx_i^k) + \nabla^2 f(\hx_i^k)h_i^k\rs\| \\
        & + \|\nabla f(\hx_i^k)-\hg_i(\hx_i^k)\| \\
        & + \opnorm{\nabla^2 f(\hx_i^k)-\hH_i(\hx_i^k)}\,\|h_i^k\|.
    \end{aligned}
    \end{equation*}
    By the global $\Lavg_2$-Lipschitzness of the Hessian~\eqref{eq:taylor_grad_bound_global},
    \begin{equation*}
        \|\nabla f(x) - \ls\nabla f(\hx_i^k) + \nabla^2 f(\hx_i^k)h_i^k\rs\| \le \tfrac{\Lavg_2}{2}\|h_i^k\|^2.
    \end{equation*}
    Also,
    \begin{equation*}
        \begin{aligned}
            \|\nabla f(\hx_i^k)-\hg_i(\hx_i^k)\|
            & \le \|\nabla f(\hx_i^k)-\bar g_x^k\| + \|\bar g_x^k-\hg_i(\hx_i^k)\| \\
            & \le 2\Lavg_1\hDex{\hx}{k} + \hDeg{\hx}{k}.
        \end{aligned}
    \end{equation*}
    Finally,
    \begin{equation*}
    \begin{aligned}
        \opnorm{\nabla^2 f(\hx_i^k)-\hH_i(\hx_i^k)}
        \le~ & \opnorm{\nabla^2 f(\hx_i^k)-\bar H_x^k} \\
        & + \opnorm{\bar H_x^k-\hH_i(\hx_i^k)} \\
        \le~ & 2\Lavg_2\hDex{\hx}{k} + \hDeH{\hx}{k}.
    \end{aligned}
\end{equation*}
    Combining the three bounds gives~\eqref{eq:inexact_taylor_grad_app}.
\end{proof}

\begin{customlemma}{\ref{lem:node_descent}}
Let Assumptions~\ref{as:L1},\ref{as:L2} hold. 
Let $x_i^{k+1}$ be defined by~\eqref{eq:step} with parameters $\delta_{1,k},\delta_{2,k}, L$ satisfying
\begin{equation}
    \label{eq:params}
    \delta_{1,k} \ge \Done{\hx}{k},\quad \delta_{2,k} \ge \Dtwo{\hx}{k}, \quad L \ge \Lavg_2,
\end{equation}
where $\Done{\hx}{k}$ and $\Dtwo{\hx}{k}$ are defined in~\eqref{eq:agg_deltas}.
Then, for every node $i$,
\begin{equation}
\label{eq:node_descent_final_app}
    \begin{aligned}
        f(x_i^{k+1})
        \le
        \min_{x\in\R^\dimd}
    \left\{ \right.& \left.
            f(x)
            + \tfrac{L+\Lavg_2}{6}\|x-\hx_i^k\|^3
    \right.\\ 
        &\left. 
            + \ls\gamma\delta_{1,k} + \delta_{2,k}\rs\|x-\hx_i^k\|^2 
    \right\} + \tfrac{\delta_{1,k}}{\gamma}.
    \end{aligned}
\end{equation}
\end{customlemma}

\begin{proof}
    Let us denote $s_i^k \eqdef x_i^{k+1} - \hx_i^k$. By Lemma~\ref{lem:inexact_taylor} with $y=x_i^{k+1}$,
\begin{equation}
\label{eq:majorization_step_app}
\begin{aligned}
    f(x_i^{k+1}) 
    \stackrel{\eqref{eq:inexact_taylor_value}}{\le}~
    &\hphi_i^k(x_i^{k+1}; \hx_i^k) 
    + \Done{\hx}{k}\|s_i^k\|  + \tfrac{\Dtwo{\hx}{k}}{2}\|s_i^k\|^2 \\
    &+ \tfrac{\Lavg_2}{6}\|s_i^k\|^3 \\
    \stackrel{\eqref{eq:params}}{\le}~
    &f(\hx_i^k)
    + \la \hg_i(\hx_i^k), s_i^k\ra
    + \tfrac12\la \hH_i(\hx_i^k) s_i^k, s_i^k\ra \\
    &+ \delta_{1,k}\|s_i^k\|
    + \tfrac{\delta_{2,k}}{2}\|s_i^k\|^2
    + \tfrac{L}{6}\|s_i^k\|^3 \\
    \le~ &
    f(\hx_i^k)
    + \la \hg_i(\hx_i^k), s_i^k\ra
    + \tfrac12\la \hH_i(\hx_i^k) s_i^k, s_i^k\ra \\
    &+ \tfrac{\delta_{1,k}}{2\gamma} + \tfrac{\gamma\delta_{1,k}}{2}\|s_i^k\|^2
    + \tfrac{\delta_{2,k}}{2}\|s_i^k\|^2
    + \tfrac{L}{6}\|s_i^k\|^3 \\
    \stackrel{\eqref{eq:model_2ord}}{=}~& f(\hx_i^k) + \home_i^k\ls x_i^{k+1}; \hx_i^k\rs \\
    \stackrel{\eqref{eq:step}}{=}~& \min_{x\in\R^\dimd} \lb f(\hx_i^k) + \home_i^k\ls x; \hx_i^k\rs\rb,
\end{aligned}
\end{equation}
where the third inequality follows from Young's inequality $\delta_{1,k}\|s_i^k\| \le \frac{\delta_{1,k}}{2\gamma} + \frac{\gamma\delta_{1,k}}{2}\|s_i^k\|^2$.

Now take any $x\in\R^\dimd$ and denote $h_i^k \eqdef x-\hx_i^k$.
By Lemma~\ref{lem:inexact_taylor}, we have
\begin{equation*}
    \begin{aligned}
        \hphi_i^k(x; \hx_i^k) \stackrel{\eqref{eq:inexact_taylor_2ord}}{=} &  ~f(\hx_i^k) + \la \hg_i(\hx_i^k), h_i^k\ra + \tfrac12\la \hH_i(\hx_i^k) h_i^k, h_i^k\ra \\
        \stackrel{\eqref{eq:inexact_taylor_value}}{\le} & ~f(x) 
        + \Done{\hx}{k}\|h_i^k\| 
        + \tfrac{\Dtwo{\hx}{k}}{2}\|h_i^k\|^2 \\
        & + \tfrac{\Lavg_2}{6}\|h_i^k\|^3  \\
        \stackrel{\eqref{eq:params}}{\le} &
        ~f(x) 
        + \delta_{1,k}\|h_i^k\|
        + \tfrac{\delta_{2,k}}{2}\|h_i^k\|^2 
        + \tfrac{\Lavg_2}{6}\|h_i^k\|^3.
     \end{aligned}
\end{equation*}
Applying Young's inequality $\delta_{1,k}\|h_i^k\| \le \tfrac{\delta_{1,k}}{2\gamma} + \tfrac{\gamma\delta_{1,k}}{2}\|h_i^k\|^2$ and adding $\frac{\delta_{1,k}}{2\gamma} + \frac{\gamma\delta_{1,k} + \delta_{2,k}}{2}\|h_i^k\|^2 + \tfrac{L}{6}\|h_i^k\|^3$ to both sides gives
\begin{equation*}
    \begin{aligned}
        f(\hx_i^k) + \home_i^k\ls x;\hx_i^k\rs
        \le~ &
        f(x) + \left(\gamma\delta_{1,k} + \delta_{2,k}\right)\|h_i^k\|^2 
        \\
        &+ \tfrac{L+\Lavg_2}{6}\|h_i^k\|^3 + \tfrac{\delta_{1,k}}{\gamma}.
    \end{aligned}
\end{equation*}
Since the inequality holds for all $x$, taking the minimum over $x\in\R^\dimd$ and combining with~\eqref{eq:majorization_step_app} yields~\eqref{eq:node_descent_final_app}.
\end{proof}
\textbf{Proof of~\eqref{eq:avg_descent_form}.}
\begin{proof}
Since $\|x - \hx_i^k\| \le \|x - \bar{x}^i_k\| + \|\hx_i^k - \bar{x}^i_k\|$, we have:
\begin{equation}
\label{eq:avgg_descent_form}
\begin{aligned}
    \tfrac{1}{\numMach}\tsum_{i=1}^\numMach f(x_i^{k+1})  &\le \min_{x\in\R^\dimd} \biggl\{ f(x) + \tfrac{4(L+\Lavg_2)}{6}\|x-\bx^k\|^3 \\
    & + 2(\gamma\delta_{1,k} + \delta_{2,k})\|x-\bx^k\|^2 \biggr\} + \tfrac{\delta_{1,k}}{\gamma}\\
    & + \tfrac{2(L+\Lavg_2)}{3}\hDex{\hx}{k}^3 + 2(\gamma\delta_{1,k} + \delta_{2,k})\hDex{\hx}{k}^2,
\end{aligned}
\end{equation}
where we used:
\begin{equation*}
    \begin{gathered}
        \|a+b\|^3 \le 4\|a\|^3 + 4\|b\|^3,\\
        \|a+b\|^2 \le 2\|a\|^2 + 2\|b\|^2,\\
        \|a+b\| \le \|a\|+\|b\|,
    \end{gathered}
\end{equation*}

By convexity of $f$, Jensen's inequality gives
\begin{equation}
\label{eq:jensen_step}
    f(\bar x^{k+1}) \le \tfrac{1}{\numMach}\textstyle{\sum}_{i=1}^\numMach f(x_i^{k+1}),
\end{equation}
and combining~\eqref{eq:avgg_descent_form} with~\eqref{eq:jensen_step} yields ~\eqref{eq:avg_descent_form}.
\end{proof}


\begin{theorem}
\label{thm:convex_case_theorem}

    Let Assumptions~\ref{as:cnvxty},~\ref{as:L1},~\ref{as:L2} hold. Let $\numIter$ be the total number of iterations, 
    \begin{equation*}
        \delta_{1, k} = \delta_1 \ge \max_{0 \leq j \leq \numIter} \Done{\hx}{j},~~\delta_{2, k} = \delta_2 \ge \max_{0 \leq j \leq \numIter} \Dtwo{\hx}{j},
    \end{equation*}
    and $L \ge \Lavg_2$ in Algorithm~\ref{alg:basic}. Also define 
    \begin{equation*}
        \hDe_{\hx} = \max_{0 \leq j \leq \numIter} \hDex{\hx}{j}
    \end{equation*}
    Then, for any $\gamma > 0$, after $N + 1$ iterations of Algorithm~\ref{alg:basic}, we have the following:
    \begin{align*}
        \notag &f(\bx^{\numIter+1})  - f(x^*) 
        \le \delta_1 (\numIter+4) \ls \tfrac{9 D^2}{(\numIter+1)(\numIter+2)} \gamma + \tfrac{1}{4\gamma} \rs \\
        \notag&\qquad \qquad + 9 \tfrac{\numIter+4}{(\numIter+1)(\numIter+2)} \delta_2 D^2+ \tfrac{18}{(\numIter+2)(\numIter+3)} (L+\Lavg_2) D^3 \\
        &\qquad \qquad + \tfrac{\numIter+4}{4} \left( \tfrac{2(L+\Lavg_2)}{3}\hDe_{\hx}^3  + 2(\gamma\delta_1 + \delta_2)\hDe_{\hx}^2 \right).
    \end{align*}
    By introducing $\cO$-notation and fixing $\gamma = \cO(\numIter/D)$, we get:
    \begin{equation*}
        \begin{aligned}
            f(\bx^{N+1}) &- f(x^*) 
            \leq 
            \cO \left( \delta_1 D + \tfrac{\delta_2 D^2}{N} + \tfrac{(L+\Lavg_2) D^3}{N^2}\right. \\
            & + 
            \left. (L + \Lavg_2)N \hDe_{\hx}^3 + \tfrac{N^2\delta_1\hDe_{\hx}^2}{D} + N\delta_2\hDe_{\hx}^2  \right)        
        \end{aligned}
    \end{equation*}
\end{theorem}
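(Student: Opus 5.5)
Starting from the averaged descent inequality \eqref{eq:avg_descent_form}, valid for each $k=0,\dots,\numIter$ with $\delta_{1,k}=\delta_1$, $\delta_{2,k}=\delta_2$ and $\econs_k\le \econs\eqdef \tfrac{2(L+\Lavg_2)}{3}\hDe_{\hx}^3+2(\gamma\delta_1+\delta_2)\hDe_{\hx}^2$, I would run the classical cubic Newton convex argument of \cite{nesterov2006cubic}. In the minimum on the right-hand side of \eqref{eq:avg_descent_form} I would plug the convex combination $x = (1-\theta_k)\bx^k + \theta_k x^*$ with $\theta_k\in[0,1]$. Convexity gives $f(x)\le f(\bx^k)-\theta_k(f(\bx^k)-f(x^*))$, and $\|x-\bx^k\|=\theta_k\|\bx^k-x^*\|\le \theta_k D$. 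For this I need $\bx^k\in\mathcal{L}'(\bx^0)$, which is guaranteed by the near-monotonicity \eqref{eq:error_accumulation} under the condition $\numIter(\delta_1/\gamma+\econs)\le\e$ (as in Remark~\ref{rem:lebesgue_diameter_convex}). Writing $r_k\eqdef f(\bx^k)-f(x^*)$, this yields the recursion
\begin{equation*}
r_{k+1}\le (1-\theta_k) r_k + \tfrac{2(L+\Lavg_2)}{3}\theta_k^3 D^3 + 2(\gamma\delta_1+\delta_2)\theta_k^2D^2 + \tfrac{\delta_1}{\gamma}+\econs .
\end{equation*}

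Next I would choose $\theta_k = \tfrac{3}{k+3}$ and unroll with weights $A_k=(k+1)(k+2)(k+3)$, so that $(1-\theta_k)A_{k+1}=A_k$ up to shift. Equivalently, I would multiply by $(k+1)(k+2)(k+3)$ and sum telescopically. The first step $k=0$ has $\theta_0=1$, which kills the $r_0$ term, so no initial gap appears; this matches the absence of $r_0$ in the claimed bound. After dividing by $A_{\numIter}\sim(\numIter+1)(\numIter+2)(\numIter+3)$, the error terms produce the following contributions. The cubic term gives $\sum\theta_k^3A_{k}\sim 27\sum 1 \approx 27(\numIter+1)$, hence $\tfrac{18(L+\Lavg_2)D^3}{(\numIter+2)(\numIter+3)}$. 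The quadratic term gives $\sum 9(k+1)(k+2)/(k+3)\lesssim \tfrac{9}{2}(\numIter+1)(\numIter+4)$, hence the $9\tfrac{\numIter+4}{(\numIter+1)(\numIter+2)}(\gamma\delta_1+\delta_2)D^2$ terms. The additive terms give $\sum A_k/A_{\numIter}\le (\numIter+4)/4$, hence $\tfrac{\numIter+4}{4}(\delta_1/\gamma+\econs)$. Grouping the $\delta_1$ parts, $\delta_1(\numIter+4)\bigl(\tfrac{9D^2\gamma}{(\numIter+1)(\numIter+2)}+\tfrac1{4\gamma}\bigr)$, reproduces the stated form.

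The main obstacle is bookkeeping of the exact constants in the weighted sums, together with the domain issue of keeping $\bx^k$ in the enlarged Lebesgue set. For the constants I would verify the elementary inequalities $\sum_{k=0}^{\numIter}(k+1)(k+2)(k+3)\le (\numIter+1)(\numIter+2)(\numIter+3)(\numIter+4)/4$ and the analogous bound for the quadratic term by induction. For the domain issue I would simply invoke \eqref{eq:error_accumulation} with the standing assumption, deferred to Remark~\ref{rem:lebesgue_diameter_convex}. Finally, the $\cO$-form follows by substituting $\gamma\asymp \numIter/D$. Then $\delta_1\numIter(\gamma D^2/\numIter^2+1/\gamma)=\cO(\delta_1 D)$, and $\numIter\econs$ becomes $\cO\bigl((L+\Lavg_2)\numIter\hDe_{\hx}^3+\numIter^2\delta_1\hDe_{\hx}^2/D+\numIter\delta_2\hDe_{\hx}^2\bigr)$.
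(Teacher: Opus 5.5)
Your proposal is correct and follows the paper's proof essentially step for step: the substitution $x=(1-\theta_k)\bx^k+\theta_k x^*$ into \eqref{eq:avg_descent_form}, the choice $\theta_k=3/(k+3)$ with weights $A_k=(k+1)(k+2)(k+3)$, and the sums $\sum_k A_k=(\numIter+1)(\numIter+2)(\numIter+3)(\numIter+4)/4$ and $A_k\theta_k^3\le 27$ all coincide; note that the exact telescoping identity is $(1-\theta_k)A_k=A_{k-1}$ with $A_{-1}=0$, not the shifted form you first wrote. Your explicit remark that $\|\bx^k-x^*\|\le D$ needs $\numIter(\delta_1/\gamma+\econs)\le\e$ makes precise an assumption the paper uses silently, so the ``for any $\gamma>0$'' in the statement should be read under that condition.
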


\begin{proof}

    From ~\eqref{eq:avg_descent_form}, since $f$ is convex and $x^*$ is a minimizer, it is enough to consider points on the segment between $x^k$ and $x^*$. For any $k\geq0$ we obtain:
    \begin{align*}
        f(\bx^{k+1})
        &\le \min_{\alpha_k \in [0,1]}
            \biggl\{
                f\bigl(\bx^k + \alpha_k(x^* - \bx^k)\bigr) \notag \\
            & + 2\bigl(\gamma\delta_1 + \delta_2\bigr)\|\alpha_k(x^* - \bx^k)\|^2 \notag \\
            & + \tfrac{2(L+\Lavg_2)}{3}\|\alpha_k(x^* - \bx^k)\|^3
            \biggr\} + \tfrac{\delta_1}{\gamma} \notag \\
            & +  \tfrac{2(L+\Lavg_2)}{3}\hDex{\hx}{k}^3  + 2(\gamma\delta_1 + \delta_2)\hDex{\hx}{k}^2  \notag \\ 
        &\le \min_{\alpha_k \in [0,1]}
            \biggl\{
                (1 - \alpha_k)f(\bx^k) + \alpha_k f(x^*) \notag \\
            &+ 2\bigl(\gamma\delta_1 + \delta_2\bigr)\alpha_k^2 D^2 + \tfrac{2(L+\Lavg_2)}{3}\alpha_k^3 D^3
            \biggr\} \notag \\
            &+ \tfrac{\delta_1}{\gamma} +  \tfrac{2(L+\Lavg_2)}{3}\hDex{\hx}{k}^3 + 2(\gamma\delta_1 + \delta_2)\hDex{\hx}{k}^2 ,
    \end{align*}
    Subtracting $f(x^*)$ from both sides of inequality above, we obtain the following one-step recursion valid for any $\alpha_k \in [0,1]$:
    \begin{equation}
        \label{eq:one_step_recursion}
        \begin{aligned}
            f(\bx^{k+1}) & - f(x^*) 
            \le (1 - \alpha_k)\bigl(f(\bx^k) - f(x^*)\bigr) \\
            & + 2\ls\gamma\delta_1 + \delta_2\rs D^2 \alpha_k^2 + \tfrac{2(L+\Lavg_2)}{3} D^3 \alpha_k^3 + \tfrac{\delta_1}{\gamma} \\
            & +  \tfrac{2(L+\Lavg_2)}{3}\hDex{\hx}{k}^3 + 2(\gamma\delta_1 + \delta_2)\hDex{\hx}{k}^2
        \end{aligned}
    \end{equation}
    To unroll this recursion, we choose the sequence $\alpha_k = \tfrac{3}{k+3} \in (0, 1]$ and multiply both sides of (31) by $A_k \stackrel{\mathrm{def}}{=} (k+1)(k+2)(k+3)$. Note that $(1 - \alpha_k)A_k = \tfrac{k}{k+3}A_k = k(k+1)(k+2) = A_{k-1}$. Also define $\alpha_0 = 1$ and $A_{-1} \stackrel{\mathrm{def}}{=} A_0(1 - \alpha_0) = 0$.
    Using the upper bounds $\tfrac{k+1}{k+3} \leq 1$, $\tfrac{k+2}{k+3} \leq 1$ and $(k+2) < (k+3)$, we get:
    \begin{align*}
        A_k \alpha_k^2 &= \tfrac{9(k+1)(k+2)(k+3)}{(k+3)^2} = \tfrac{9(k+1)(k+2)}{k+3} \leq 9(k+3), \\
        A_k \alpha_k^3 &= \tfrac{27(k+1)(k+2)(k+3)}{(k+3)^3} = \tfrac{27(k+1)(k+2)}{(k+3)^2} \leq 27.
    \end{align*}  

    Hence, we obtain:
    \begin{align}
    \label{eq:A_N_A_0}
        \notag &A_{\numIter} \ls f(\bx^{\numIter+1}) - f(x^*) \rs
        \le A_0(1-\alpha_0)\ls f(\bx^0) - f(x^*) \rs \\
        \notag& + 18\ls\gamma\delta_1 + \delta_2\rs D^2 \tsum_{k=0}^\numIter (k+3) + 18(L+\Lavg_2) D^3 (\numIter+1) \\
        & + \tfrac{\delta_1}{\gamma} \tsum_{k=0}^\numIter A_k + \tsum_{k=0}^\numIter A_k \biggl\{ \tfrac{2(L+\Lavg_2)}{3}\hDex{\hx}{k}^3 + 2(\gamma\delta_1 + \delta_2)\hDex{\hx}{k}^2 \biggr\}. 
    \end{align}
    Note that $\tsum_{k=0}^N A_k = \tsum_{k=0}^N (k+1)(k+2)(k+3) = \tfrac{(N+1)(N+2)(N+3)(N+4)}{4}$ and $\tsum_{k=0}^{N}(k+3) = \tfrac{(N+1)(N+6)}{2}$. Then, dividing by $A_{\numIter}$ both sides of ~\eqref{eq:A_N_A_0} and using $\hDex{\hx}{k} \le \hDe_{\hx}$:

    \begin{align*}
        \notag &f(\bx^{\numIter+1}) - f(x^*) 
        \le \delta_1 (\numIter+4) \ls \tfrac{9 D^2}{(\numIter+1)(\numIter+2)} \gamma + \tfrac{1}{4\gamma} \rs \\
        &+ 9 \tfrac{\numIter+4}{(\numIter+1)(\numIter+2)} \delta_2 D^2 + \tfrac{18}{(\numIter+2)(\numIter+3)} (L+\Lavg_2) D^3 \notag\\ 
         &+ \tfrac{\numIter+4}{4} \biggl\{ \tfrac{2(L+\Lavg_2)}{3}\hDe_{\hx}^3 + 2(\gamma\delta_1 + \delta_2)\hDe_{\hx}^2 \biggr\}.
    \end{align*}
    Fixing $\gamma = \tfrac{\sqrt{(\numIter+1)(\numIter+2)}}{6D}$, we arrive at the corresponding convergence rate:
    \begin{align}
        \notag &f(\bx^{\numIter+1}) - f(x^*) 
        \le 3 \delta_1 D \tfrac{\numIter+4}{\sqrt{(\numIter+1)(\numIter+2)}}  \\  
        &+ \tfrac{9(\numIter+4)}{(\numIter+1)(\numIter+2)} \delta_2 D^2 + \tfrac{18(L+\Lavg_2) D^3}{(\numIter+2)(\numIter+3)} \label{exact_upper_bound_after_gamma}
        \\
        \notag &+ \tfrac{\numIter+4}{4} \biggl\{ \tfrac{2(L+\Lavg_2)}{3}\hDe_{\hx}^3 
         + \left(\tfrac{\sqrt{(\numIter+1)(\numIter+2)}}{3D}\delta_1 + 2\delta_2\right) \hDe_{\hx}^2 \biggr\} \\
        \notag&= \cO(\delta_1 D) + \cO\left(\tfrac{\delta_2 D^2}{\numIter}\right) + \cO\left(\tfrac{(L+\Lavg_2) D^3}{\numIter^2}\right) + \cO\left((L + \Lavg_2) \numIter \hDe_{\hx}^3 \right) \\
        \notag&\quad + \cO\left(\tfrac{\numIter^2\delta_1\hDe_{\hx}^2}{D} \right) + \cO\left( \numIter\delta_2\hDe_{\hx}^2 \right).
    \end{align}
\end{proof}


\begin{customthm}{\ref{thm:basic_delta_choice}}
    Let Assumptions~\ref{as:cnvxty},~\ref{as:L1},~\ref{as:L2} hold. Let $\numIter$ be the total number of iterations, 
    \begin{equation*}
        \delta_{1, k} = \delta_1 \ge \max_{0 \leq j \leq \numIter} \Done{\hx}{j},~~\delta_{2, k} = \delta_2 \ge \max_{0 \leq j \leq \numIter} \Dtwo{\hx}{j},
    \end{equation*}
    and $L \ge \Lavg_2$ in Algorithm~\ref{alg:basic}.
    Let $\e>0$ be the desired accuracy and let the gradient and Hessian inexactness levels be chosen for every $k = 0, \ldots, N$ as
    \begin{equation}
        \label{eq:non_acc_cons_bound_g_H}
        \hDeg{\hx}{k} \eqdef \hDe_{g \vert \hx } \le \tfrac{\sqrt{2}}{144}\tfrac{\e}{D}, \quad
        \hDeH{\hx}{k} \eqdef \hDe_{H \vert \hx } \le \tfrac{\sqrt{3}}{72}\sqrt{\tfrac{\e(L+\Lavg_2)}{D}}.
    \end{equation}

    Depending on the target accuracy $\e$, we choose the number of iterations $\numIter$ and the state inexactness level $\hDe_{\hx \vert k} \eqdef \hDe_{\hx}$ as follows:

    \begin{enumerate}[noitemsep,topsep=0pt,leftmargin=12pt]
        \item If $\e$ is sufficiently small, namely $\e \le 12(L+\Lavg_2)D^3$, we set:
    \begin{equation}
    \label{eq:num_iter_small}
        \numIter = \left\lceil \sqrt{\tfrac{108(L+\Lavg_2) D^3}{\e}} \right\rceil - 2,
    \end{equation}
    \begin{equation}
        \label{eq:non_acc_cons_bound_x_small}
        \hDe_{\hx} \le \min \lb
            \tfrac{\sqrt{2}\e}{288 \Lavg_1 D}, 
            \tfrac{\sqrt{3\e(L+\Lavg_2)}}{144 \Lavg_2\sqrt{D}} , 
            \tfrac{\sqrt\e}{3\sqrt{(L+\Lavg_2)D}}
        \rb.
    \end{equation}
        \item Otherwise, if $\e > 12(L+\Lavg_2)D^3$, we set $N = 1$ and:
    \begin{equation}
        \label{eq:non_acc_cons_bound_x_large}
        \hDe_{\hx} \le \min \lb
            \tfrac{\sqrt{2}\e}{288 \Lavg_1 D}, 
            \tfrac{\sqrt{3\e(L+\Lavg_2)}}{144 \Lavg_2\sqrt{D}}, 
            \tfrac{\e^{1/3}}{(6(L+\Lavg_2))^{1/3}}, 
            D
        \rb.
    \end{equation}
    \end{enumerate}

    We also set $\gamma = \tfrac{\sqrt{(\numIter+1)(\numIter+2)}}{6D}$. Then after $\numIter+1$ iterations Algorithm~\ref{alg:basic} outputs an $\e$-solution, i.e.,
    \begin{equation*}
        f(\bx^{\numIter+1})-f(x^*) \le \e.
    \end{equation*}
\end{customthm}


\begin{proof}
    By the conditions of Theorem~\ref{thm:convex_case_theorem}, the regularization parameters must satisfy $\delta_{1, k} = \delta_1 \ge \max\limits_{j \in [\numIter]} \Done{\hx}{j}$ and $\delta_{2, k} = \delta_2 \ge \max\limits_{j \in [\numIter]} \Dtwo{\hx}{j}$.
    
    In both cases, $\hDe_{\hx}$ is bounded by the first two terms in the minimum. Therefore, by~\eqref{eq:agg_deltas_app}, for every $j = 0, \ldots, \numIter$ we have:
    \begin{equation}
    \begin{aligned}
    \label{eq:deltas_choice}
        \Done{\hx}{j} &\le \hDe_{g \vert \hx} + 2\Lavg_1\hDe_{\hx} \\
        &\le \tfrac{\sqrt{2}}{144}\tfrac{\e}{D} + 2\Lavg_1 \left( \tfrac{\sqrt{2}}{288 \Lavg_1} \tfrac{\e}{D} \right) = \tfrac{\sqrt{2}}{72}\tfrac{\e}{D} \eqdef \delta_1, \\[2ex]
        \Dtwo{\hx}{j} &\le \hDe_{ H \vert \hx} + 2\Lavg_2\hDe_{\hx} \\
        &\le \tfrac{\sqrt{3}}{72}\sqrt{\tfrac{\e(L+\Lavg_2)}{D}} + 2\Lavg_2 \left( \tfrac{\sqrt{3}}{144 \Lavg_2} \sqrt{\tfrac{\e(L+\Lavg_2)}{D}} \right) \\
        &= \tfrac{\sqrt{3}}{36}\sqrt{\tfrac{\e(L+\Lavg_2)}{D}} \eqdef \delta_2.
    \end{aligned}
    \end{equation}
    
    Using the general inequalities $\tfrac{\numIter+4}{4} \leq \tfrac{\numIter+2}{2}$ and $\sqrt{(\numIter+1)(\numIter+2)} \leq \numIter+2$, the consensus error term from~\eqref{exact_upper_bound_after_gamma} can be bounded as:
    \begin{align}
    \label{eq:consensus_term_bound}
        & \tfrac{\numIter+4}{4} \left\{ \tfrac{2(L+\Lavg_2)}{3}\hDe_{\hx}^3 + \left(\tfrac{\sqrt{(\numIter+1)(\numIter+2)}}{3D}\delta_1 + 2\delta_2\right) \hDe_{\hx}^2 \right\} \notag \\
        &\leq \tfrac{L+\Lavg_2}{3}(\numIter+2) \hDe_{\hx}^3 + \tfrac{(\numIter+2)^2}{6D}\delta_1\hDe_{\hx}^2 + (\numIter+2)\delta_2\hDe_{\hx}^2.
    \end{align}
    
    \begin{enumerate}[noitemsep,topsep=0pt,leftmargin=12pt]
        \item $\e \le 12(L+\Lavg_2)D^3$ regime:
        
        Let us bound every term in~\eqref{exact_upper_bound_after_gamma}. 
    
    Since $(\numIter+2)^2 \ge \tfrac{108(L+\Lavg_2) D^3}{\e}$, we have:
    \begin{align}
    \label{D^3_term}
        \tfrac{18(L+\Lavg_2) D^3}{(\numIter+2)(\numIter+3)} < \tfrac{18(L+\Lavg_2) D^3}{(\numIter+2)^2} \le \tfrac{18}{108} \e = \tfrac{\e}{6}.
    \end{align}

    Then, using the fact that $\tfrac{\numIter+4}{\sqrt{(\numIter+1)(\numIter+2)}} \le 2\sqrt{2}$ for $\numIter \ge 1$:
    \begin{align}
    \label{eq:gradient_term_bound}
        \notag &3 \delta_1 D \tfrac{\numIter+4}{\sqrt{(\numIter+1)(\numIter+2)}} \le 6\sqrt{2} \delta_1 D \\
        &= 6\sqrt{2} D \left( \tfrac{\sqrt{2}}{72}\tfrac{\e}{D} \right) = \tfrac{12}{72}\e = \tfrac{\e}{6}.
    \end{align}
    
    Then, using $\tfrac{\numIter+4}{(\numIter+1)(\numIter+2)} \le \tfrac{4}{\numIter+2} \le 4\sqrt{\tfrac{\e}{108(L+\Lavg_2)D^3}}$ and the definition of $\delta_2$ from ~\eqref{eq:deltas_choice}:
    \begin{align}
    \label{eq:hessian_term_bound}
        9 \delta_2 D^2 \tfrac{\numIter+4}{(\numIter+1)(\numIter+2)} &\le \tfrac{\e}{6}.
    \end{align}
    According to ~\eqref{eq:num_iter_small}:

    \begin{equation*}
    \label{eq:number_of_iterations}
        \numIter + 2 = \max\left\{ 3, \left\lceil \sqrt{\tfrac{108(L+\Lavg_2) D^3}{\e}}  \right\rceil\right\}.
    \end{equation*}

    As $\e \leq 12(L+\Lavg_2)D^3$, $\sqrt{\tfrac{108(L+\Lavg_2) D^3}{\e}} \geq 3$ and $N+2 = \left\lceil\sqrt{\tfrac{108(L+\Lavg_2) D^3}{\e}} \right\rceil \leq \sqrt{\tfrac{108(L+\Lavg_2) D^3}{\e}} + 1 \leq \tfrac{4}{3}\sqrt{\tfrac{108(L+\Lavg_2) D^3}{\e}}$
    \begin{align*}
        & \tfrac{L+\Lavg_2}{3} (\numIter+2) \hDe_{\hx}^3 \\
        &\le \tfrac{L+\Lavg_2}{3} \tfrac{4}{3} \sqrt{\tfrac{108(L+\Lavg_2)D^3}{\e}} \left[ \tfrac{1}{27} \tfrac{\e^{3/2}}{(L+\Lavg_2)^{3/2}D^{3/2}} \right] = \tfrac{8\sqrt{3}}{81} \e, \\[1.5ex]
        & \tfrac{(\numIter+2)^2}{6D} \delta_1 \hDe_{\hx}^2 \\
        &\le (\tfrac{4}{3})^2\tfrac{108(L+\Lavg_2)D^2}{6\e} \left( \tfrac{\sqrt{2}}{72} \tfrac{\e}{D} \right) \left[ \tfrac{1}{9} \tfrac{\e}{(L+\Lavg_2)D} \right] = \tfrac{4\sqrt{2}}{81} \e, \\[1.5ex]
        & (\numIter+2) \delta_2 \hDe_{\hx}^2 \\
        &\le \tfrac{4}{3}\sqrt{\tfrac{108(L+\Lavg_2)D^3}{\e}} \left( \tfrac{\sqrt{3}}{36} \sqrt{\tfrac{\e(L+\Lavg_2)}{D}} \right) \left[ \tfrac{1}{9} \tfrac{\e}{(L+\Lavg_2)D} \right] \\
        &= \tfrac{2}{27} \e.
    \end{align*}
        Summing up these three components and combining with ~\eqref{D^3_term}, ~\eqref{eq:gradient_term_bound} and ~\eqref{eq:hessian_term_bound}, we have:
    \begin{equation*}
    \begin{aligned}
        f(\bx^{\numIter+1}) - f(x^*) &\leq \tfrac{\e}{6} + \tfrac{\e}{6} + \tfrac{\e}{6} + \tfrac{\e}{2} = \e,
    \end{aligned}
    \end{equation*}
        \item  $\e > 12(L+\Lavg_2)D^3$: 
        
        $\numIter = 1$, thus $\numIter+2 = 3$. Then:
    \begin{align*}
        \tfrac{18(L+\Lavg_2) D^3}{3 \cdot 4} &= \tfrac{3}{2}(L+\Lavg_2)D^3 < \tfrac{3}{2} (\tfrac{\e}{12}) = \tfrac{\e}{8} \le \tfrac{\e}{6}, \\
        3 \delta_1 D \tfrac{5}{\sqrt{6}} &= \tfrac{15}{\sqrt{6}} D \left( \tfrac{\sqrt{2}}{72}\tfrac{\e}{D} \right) = \tfrac{5}{24\sqrt{3}}\e \le \tfrac{\e}{6}, \\
        9 \delta_2 D^2 \tfrac{5}{6} &= \tfrac{15}{2} D^2 \left( \tfrac{\sqrt{3}}{36}\sqrt{\tfrac{\e(L+\Lavg_2)}{D}} \right) \le \tfrac{15}{2 \cdot72}\e \le \tfrac{\e}{6}.
    \end{align*}
    Then, using the third and the fourth bounds from~\eqref{eq:non_acc_cons_bound_x_large}, for ~\eqref{eq:consensus_term_bound} we obtain:
    \begin{align*}
        & \tfrac{L+\Lavg_2}{3} (3) \hDe_{\hx}^3 \le (L+\Lavg_2) \left[ \tfrac{\e}{6(L+\Lavg_2)} \right] = \tfrac{\e}{6}, \\[1.5ex]
        & \tfrac{9}{6D} \delta_1 \hDe_{\hx}^2 \le \tfrac{3}{2D} \left( \tfrac{\sqrt{2}}{72} \tfrac{\e}{D} \right) D^2 = \tfrac{\sqrt{2}}{48} \e \le \tfrac{\e}{12}, \\[1.5ex]
        & 3 \delta_2 \hDe_{\hx}^2 \le 3 \left( \tfrac{\sqrt{3}}{36} \sqrt{\tfrac{\e(L+\Lavg_2)}{D}} \right) D^2 \leq \tfrac{\e}{24}.
    \end{align*}
    Summing up these bounds, we again obtain $f(\bx^{\numIter+1}) - f(x^*) \le \e$, which concludes the proof.
    \end{enumerate}
\end{proof}

\textbf{Proof of Remark~\ref{rem:lebesgue_diameter_convex}}
\begin{proof}
\label{prf:remark_convex_proof}
    We need to show that $\numIter \ls \tfrac{\delta_1}{\gamma} + \hDe_0 \rs \leq \e$.  Choosing regularization parameter $\delta_1$, number of iterations $N$ and consensus errors as in Theorem \ref{thm:basic_delta_choice}, we have:
    \begin{align*}
        & N \tfrac{\delta_1}{\gamma} = \tfrac{N}{\sqrt{(N+1)(N+2)}} \cdot 6\delta_1 D \le \delta_1 D  = \tfrac{6\sqrt{2}}{72} \tfrac{\varepsilon}{D} \cdot D = \tfrac{\sqrt{2}}{12} \varepsilon \\[10pt]
        & N \tfrac{2 (L + \Lavg_2)}{3} (\Delta_x^{\text{max}})^3 \le (N+2) \cdot \tfrac{2}{3} (L + \Lavg_2) (\Delta_x^{\text{max}})^3 \\
        & \quad \le \tfrac{4}{3}\sqrt{\tfrac{108 (L + \Lavg_2) D^3}{\varepsilon}} \cdot \tfrac{2}{3} (L + \Lavg_2) \cdot \tfrac{\varepsilon^{3/2}}{27 ((L + \Lavg_2) D)^{3/2}} \\
        & \quad = \tfrac{8\sqrt{108}}{243} \varepsilon = \tfrac{8\sqrt{3}}{81}  \varepsilon \\[10pt]
        & N \tfrac{\sqrt{(N+1)(N+2)}}{3} \delta_1 (\Delta_x^{\text{max}})^2 \le \tfrac{(N+2)^2}{3} \delta_1 (\Delta_x^{\text{max}})^2 \\
        & \quad \le \tfrac{16}{9}\tfrac{108 (L + \Lavg_2) D^3}{\varepsilon \cdot 3D} \cdot \tfrac{\sqrt{2}}{72} \tfrac{\varepsilon}{D} \cdot \tfrac{\varepsilon}{9 (L + \Lavg_2) D} = \tfrac{8\sqrt{2}}{81}  \varepsilon \\[10pt]
        & N \cdot 2 \delta_2 (\Delta_x^{\text{max}})^2 \le 2(N+2) \delta_2 (\Delta_x^{\text{max}})^2 \\
        & \quad \le \tfrac{2 \cdot 4}{3}\tfrac{\sqrt{108 (L + \Lavg_2) D^3}}{\sqrt{\varepsilon}} \cdot \tfrac{\sqrt{3}}{36} \sqrt{\tfrac{\varepsilon (L + \Lavg_2)}{D}} \cdot \tfrac{\varepsilon}{9 (L + \Lavg_2) D} = \tfrac{4}{27} \varepsilon 
    \end{align*}
    Summing these four terms up, we have:
    \begin{align*}
        & \left( \tfrac{\sqrt{2}}{12} + \tfrac{8\sqrt{3}}{81} + \tfrac{8\sqrt{2}}{81} + \tfrac{4}{27} \right) \varepsilon \leq \e
    \end{align*}
    Hence:
    \begin{align*}
        \numIter \ls \tfrac{\delta_1}{\gamma} + \econs \rs \leq \e
    \end{align*}
    Then, our method is monotone up to a small inaccuracy $\e$.
\end{proof}


\begin{customlemma}{\ref{lem:comm_complexity}}
    To satisfy the consensus bounds \eqref{eq:non_acc_cons_bound_x_small} and ~\eqref{eq:non_acc_cons_bound_g_H} at each iteration $k = 1, \ldots, \numIter$, it is sufficient to perform the following number of consensus steps:
    \begin{subequations}
    \label{eq:T_complexities}
    \begin{align}
        \Tx{k} 
        & \geq 
        \tfrac{\tau}{\lambda} \log 
            \left( 
                2D\sqrt{\numMach} \cdot \max 
                    \left\{ 
                        \tfrac{288 \Lavg_1 D}{\sqrt{2}\e}, 
                    \right. 
            \right. 
        \notag \\
        & \qquad 
            \left. 
                \left. 
                    \tfrac{144 \Lavg_2\sqrt{D}}{\sqrt{3\e(L+\Lavg_2)}}, 
                    \tfrac{3\sqrt{(L+\Lavg_2)D}}{\sqrt{\e}} 
                \right\} 
            \right), 
        \label{eq:Tx_final} 
        \\[2ex]
        \Tg{k} 
        & \geq 
        \tfrac{\tau}{\lambda} \log 
            \left( 
                \tfrac{144 D \sqrt{\numMach} \big( \zeta_g + 2\Lmax_1 D \big)}{\sqrt{2}\e} 
            \right), 
        \label{eq:Tg_final} 
        \\[2ex]
        \THh{k} 
        & \geq 
        \tfrac{\tau}{\lambda} \log 
            \left( 
                \tfrac{72\sqrt{\numMach D} \big( \zeta_H + 2\Lmax_2\sqrt{\dimd} D \big)}{\sqrt{3\e(L+\Lavg_2)}} 
            \right), 
        \label{eq:TH_final}
    \end{align}
    \end{subequations}
    where for brevity we define $\zeta_g \eqdef \sqrt{\tfrac{1}{\numMach} \sum_{i=1}^\numMach \|\nabla f_i(x^*)\|^2}$ and $\zeta_H \eqdef \sqrt{\tfrac{1}{\numMach} \sum_{i=1}^\numMach \|\nabla^2 f_i(x^*) - \nabla^2 f(x^*)\|_F^2}$.
\end{customlemma}

\begin{proof}
    The consensus bounds for points, gradients, and Hessians share a unified structure. Let $u_i^k$ be a generic local vector, and $\bar{u}^k$ be its exact network average. By Assumption~\ref{assum:mixing_matrix_sequence}, every $\tau$ consensus steps contract the distance to the average by a factor of $(1 - \lambda)$. Therefore, the consensus error after $T$ steps is bounded as:
    \begin{equation}
    \label{eq:general_contraction}
    \begin{split}
        \max_{i\in[\numMach]} \|\hat{u}_i^k - \bar{u}^k\| &\le \|\hat{\mU}^k - \bar{\mU}^k\|_F \\
        &\le (1 - \lambda)^{T / \tau} \|\mU^k - \bar{\mU}^k\|_F \\
        &\le e^{-\lambda T / \tau} \|\mU^k - \bar{\mU}^k\|_F.
    \end{split}
    \end{equation}

    To guarantee an accuracy $\Delta$ it is necessary to make $T \ge \tfrac{\tau}{\lambda} \log \big( \tfrac{\|\mU^k - \bar{\mU}^k\|_F}{\Delta} \big)$ consensus iterations. It remains to bound $\|\mU^k - \bar{\mU}^k\|_F$ for each sequence.
    
    Consider Lemma~\ref{lem:node_descent} and consider $x = \hx_i^{k-1}$. Then, from ~\eqref{eq:node_descent_final_app}, we have:
    \begin{equation*}
        f(x_i^k) \le f(\hx_i^{k-1}) + \tfrac{\delta_{1, k-1}}{\gamma}.
    \end{equation*}
    Let $w_{ij}^{k-1} = [\mW^{k-1}]_{ij}$. Then, $\hx_i^{k-1} = \tsum_{j=1}^\numMach w_{ij} x_j^{k-1}$ and, due to the convexity of $f$, we have:
    \begin{equation*}
        f(\hx_i^{k-1}) = f\Big(\tsum_{j=1}^\numMach w_{ij} x_j^{k-1}\Big) \le \max_{j\in[\numMach]} f(x_j^{k-1}).
    \end{equation*}
    Combining these two inequalities, we have:
    \begin{equation*}
        \max_{i\in[\numMach]} f(x_i^k) \le \max_{j\in[\numMach]} f(x_j^{k-1}) + \tfrac{\delta_{1, k-1}}{\gamma}.
    \end{equation*}
    Unrolling this recursion, we obtain:
    \begin{equation*}
        \max_{i\in[\numMach]} f(x_i^k) \le \max_{i\in[\numMach]} f(x_i^0) + \tsum_{t=0}^{k-1} \tfrac{\delta_{1, t}}{\gamma}.
    \end{equation*}
    Assuming all nodes start at the same point $x_i^0 = \bx^0$, the right-hand side is strictly bounded by $f(\bx^0) + \numIter \ls \tfrac{\delta_1}{\gamma} + \econs \rs$. Thus, by definition, $x_i^k \in \mathcal{L}'(\bx^0)$ for all $i$ and $k \le \numIter$, and then we have $\|x_i^k - x^*\| \le D$. 
    
    Furthermore, since $\bx^k$ is a convex combination of points within $\mathcal{L}'(\bx^0)$, it also belongs to this set, implying $\|\bx^k - x^*\| \le D$. 
    Now we have the bound:
    \begin{equation*}
        \|x_i^k - \bx^k\| \le \|x_i^k - x^*\| + \|x^* - \bx^k\| \le 2D.
    \end{equation*}
    Summing over all nodes, we obtain the bound in the Frobenius norm:
    \begin{equation*}
        \|\mX^k - \bar{\mX}^k\|_F = \sqrt{ \tsum_{i=1}^\numMach \|x_i^k - \bx^k\|^2 } \le 2D\sqrt{\numMach}.
    \end{equation*}
    Using~\eqref{eq:general_contraction} with $\Delta = \hDe_{\hx}$ from~\eqref{eq:non_acc_cons_bound_x_small}, we obtain~\eqref{eq:Tx_final}.
    Thus:
    \begin{align*}
        \|\mX^k - \bar{\mX}^k\|_F \le 2D\sqrt{\numMach}
    \end{align*}

    Next, we know that $\tfrac{1}{\numMach} \tsum_{j=1}^\numMach \nabla f_j(x^*) = 0$. As $\bgg^k = \tfrac{1}{\numMach} \tsum_{j=1}^\numMach \nabla f_j(\hx_j^k)$, we obtain:
    \begin{equation*}
        \begin{split}
        \|\nabla f_i(\hat{x}_i^k) - \bar{g}^k\| 
        &= \bigg\| \nabla f_i(\hat{x}_i^k) - \nabla f_i(x^*) + \nabla f_i(x^*) \\
        &\quad - \frac{1}{m} \sum_{j=1}^m \big( \nabla f_j(\hat{x}_j^k) - \nabla f_j(x^*) \big) \bigg\| \\
        &\le \|\nabla f_i(\hat{x}_i^k) - \nabla f_i(x^*)\| + \|\nabla f_i(x^*)\| \\
        &\quad + \frac{1}{m} \sum_{j=1}^m \|\nabla f_j(\hat{x}_j^k) - \nabla f_j(x^*)\|.
    \end{split}
    \end{equation*}
    Using the bound $\|\hx_i^k - x^*\| \le D$ and~\eqref{eq:L1} for the first and third terms, and the bound $\tfrac{1}{\numMach} \tsum_{j=1}^\numMach L_{1, j} D \le \Lmax_1 D$, we have:
    \begin{equation*}
    \begin{split}
        \big\| \nabla f_i(\hx_i^k) - \bgg^k \big\| &\le \Lmax_1 D + \|\nabla f_i(x^*)\| + \Lmax_1 D \\
        &= \|\nabla f_i(x^*)\| + 2\Lmax_1 D.
    \end{split}
    \end{equation*}

    Now, let $\mG^k \in \R^{\numMach \times \dimd}$ be the matrix whose $i$-th row is the local gradient $\nabla f_i(\hx_i^k)^\top$, and $\bar{\mG}^k$ be the matrix where each row is the average gradient $(\bgg^k)^\top$. We can write:
    \begin{equation*}
    \begin{split}
        \|\mG^k - \bar{\mG}^k\|_F &= \sqrt{ \tsum_{i=1}^\numMach \|\nabla f_i(\hx_i^k) - \bgg^k\|^2 } \\
        &\le \sqrt{ \tsum_{i=1}^\numMach \big( \|\nabla f_i(x^*)\| + 2\Lmax_1 D \big)^2 }.
    \end{split}
    \end{equation*}
    Applying Minkowski's inequality, we obtain:
    \begin{equation*}
    \begin{split}
        \|\mG^k - \bar{\mG}^k\|_F &\le \sqrt{ \tsum_{i=1}^\numMach \|\nabla f_i(x^*)\|^2 } + \sqrt{ \tsum_{i=1}^\numMach (2\Lmax_1 D)^2 } \\
        &= \sqrt{\numMach} \zeta_g + 2\sqrt{\numMach} \Lmax_1 D. \label{eq:Rg_bound}
    \end{split}
    \end{equation*}

    By the contraction property in Assumption~\ref{assum:mixing_matrix_sequence}, using~\eqref{eq:general_contraction}, the distance after $\Tg{k}$ steps of consensus to the exact average $\bgg^k$ is:
    \begin{equation*}
        \max_{i\in[\numMach]} \left\| \hg_i(\hx_i^k) - \bgg^k \right\| \le e^{-\lambda \Tg{k} / \tau} \sqrt{\numMach} \big( \zeta_g + 2\Lmax_1 D \big).
    \end{equation*}
    To ensure this condition, we set:

    \begin{equation}
        \Tg{k} \ge \tfrac{\tau}{\lambda} \log \left( \tfrac{\sqrt{\numMach} \big( \zeta_g + 2\Lmax_1 D \big)}{\hDe_{g \vert \hx}} \right).
    \end{equation}
    Using $\hDe_{g \vert \hx}$ from~\eqref{eq:non_acc_cons_bound_g_H}, we obtain \eqref{eq:Tg_final}.

    As $\bH^k \eqdef \tfrac{1}{\numMach} \tsum_{j=1}^\numMach \nabla^2 f_j(\hx_j^k)$, we can write:
    \begin{equation*}
    \begin{split}
        &\big\| \nabla^2 f_i(\hx_i^k) - \bH^k \big\|_F = \bigg\| \nabla^2 f_i(\hx_i^k) - \nabla^2 f_i(x^*) + \nabla^2 f_i(x^*) \\
        &\quad - \nabla^2 f(x^*) + \nabla^2 f(x^*) - \bH^k \bigg\|_F \\
        &\le \left\| \nabla^2 f_i(\hx_i^k) - \nabla^2 f_i(x^*) \right\|_F + \left\| \nabla^2 f_i(x^*) - \nabla^2 f(x^*) \right\|_F  \\
        &+ \tfrac{1}{\numMach} \tsum_{j=1}^\numMach \left\| \nabla^2 f_j(x^*) - \nabla^2 f_j(\hx_j^k) \right\|_F.
    \end{split}
    \end{equation*}
    Using ~\eqref{eq:L2}, the standard norm inequality $\|A\|_F \le \sqrt{\dimd} \opnorm{A}$ (hold for any symmetric matrix A), and the bound $\|\hx_i^k - x^*\| \le D$, we have  $\|\nabla^2 f_i(\hx_i^k) - \nabla^2 f_i(x^*)\|_F \le \sqrt{\dimd} L_{2, i} D \le \sqrt{\dimd} \Lmax_2 D$. The third term is bounded by $\tfrac{1}{\numMach} \tsum_{j=1}^\numMach \sqrt{\dimd} L_{2, j} D \leq \sqrt{\dimd} \Lmax_2 D$. Thus:
    \begin{equation*}
    \begin{split}
        \big\| \nabla^2 &f_i(\hx_i^k) - \bH^k \big\|_F \\
        &\le \sqrt{\dimd}\Lmax_2 D + \left\| \nabla^2 f_i(x^*) - \nabla^2 f(x^*) \right\|_F + \sqrt{\dimd}\Lmax_2 D \\
        &= \left\| \nabla^2 f_i(x^*) - \nabla^2 f(x^*) \right\|_F + 2\Lmax_2\sqrt{\dimd} D.
    \end{split}
    \end{equation*}

    Let $\mathcal{H}^k \in \R^{\numMach \times \dimd^2}$ be the matrix where each row is the flattened local Hessian $\nabla^2 f_i(\hx_i^k)$, and $\bar{\mathcal{H}}^k$ be the matrix of the flattened average Hessians $\bH^k$. The initial consensus error over the network in the Frobenius norm is:
    \begin{equation*}
    \begin{split}
        \|\mathcal{H}^k &- \bar{\mathcal{H}}^k\|_F = \sqrt{ \tsum_{i=1}^\numMach \|\nabla^2 f_i(\hx_i^k) - \bH^k\|_F^2 } \\
        &\le \sqrt{ \tsum_{i=1}^\numMach \big( \|\nabla^2 f_i(x^*) - \nabla^2 f(x^*)\|_F + 2\Lmax_2\sqrt{\dimd} D \big)^2 }.
    \end{split}
    \end{equation*}
    Applying Minkowski's inequality, we obtain:
    \begin{equation*}
    \begin{split}
        \|\mathcal{H}^k - \bar{\mathcal{H}}^k\|_F &\le \sqrt{ \tsum_{i=1}^\numMach \|\nabla^2 f_i(x^*) - \nabla^2 f(x^*)\|_F^2 } \\
        &\quad + \sqrt{ \tsum_{i=1}^\numMach \big(2\Lmax_2\sqrt{\dimd} D\big)^2 } \\
        &= \sqrt{\numMach} \zeta_H + \sqrt{\numMach} 2\Lmax_2\sqrt{\dimd} D.
    \end{split}
    \end{equation*}

    By the contraction property in Assumption~\ref{assum:mixing_matrix_sequence}, and since the operator norm of any matrix is upper-bounded by its Frobenius norm ($\opnorm{\mA} \le \|\mA\|_F$ for any symmetric matrix $\mA$), using~\eqref{eq:general_contraction}, the distance after $\THh{k}$ steps of consensus is:
    \begin{equation*}
    \begin{split}
        \max_{i\in[\numMach]} \opnorm{\hH_i^k - \bH^k} &\le \max_{i\in[\numMach]} \big\| \hH_i^k - \bH^k \big\|_F \\
        &\le \big\| \hat{\mathcal{H}}^k - \bar{\mathcal{H}}^k \big\|_F \\
        &\le e^{-\lambda \THh{k} / \tau} \|\mathcal{H}^k - \bar{\mathcal{H}}^k\|_F \\
        &\le e^{-\lambda \THh{k} / \tau} \sqrt{\numMach} \big( \zeta_H + 2\Lmax_2\sqrt{\dimd} D \big).
    \end{split}
    \end{equation*}
    To ensure this error is bounded by the target accuracy $\hDe_{H \vert \hx}$, we set:
    \begin{equation}
        \THh{k} \ge \tfrac{\tau}{\lambda} \log \left( \tfrac{\sqrt{\numMach} \big( \zeta_H + 2\Lmax_2\sqrt{\dimd} D \big)}{\hDe_{H \vert \hx}} \right).
    \end{equation}
    Using $\hDe_{H \vert \hx}$ from~\eqref{eq:non_acc_cons_bound_g_H}, this directly yields~\eqref{eq:TH_final}, which concludes the proof.
\end{proof}

\begin{theorem}
\label{thm:strongly_convex_theorem}
    Let Assumptions~\ref{as:L1}~\ref{as:L2} and Assumption~\ref{as:strcnvxty} with $\mu_i >0$ hold. For a given number of iterations $\numIter \ge 1$, let us define for all $0 \le j \le \numIter$ as:
    \begin{align*}
        \hDe_{g\vert \hx} &\eqdef \max_{0 \le j \le \numIter} \hDeg{\hx}{j}, \\
        \hDe_{H\vert \hx} &\eqdef \max_{0 \le j \le \numIter} \hDeH{\hx}{j}, \\
        \hDe_{\hx} &\eqdef \max_{0 \le j \le \numIter} \hDex{\hx}{j}.
    \end{align*}
    Assume that the parameters of Algorithm~\ref{alg:basic} satisfy $\delta_{1, k} = \delta_1 \geq \hDe_{g\vert \hx} + 2\Lavg_1\hDe_{\hx}$, $\delta_{2, k} = \delta_2 \geq \hDe_{H\vert \hx} + 2\Lavg_2\hDe_{\hx}$, and $L \ge \Lavg_2$. 
    
    Then, choosing $\alpha_k = \alpha$ as:
    \begin{equation}
        \label{eq:str_convex_alpha_thm}
        \alpha = \min \left\{ \tfrac{1}{2}, \tfrac{\muavg}{16\bigl(\gamma\hDe_{g \vert \hx} + \hDe_{H\vert \hx} + 2(\gamma\Lavg_1 + \Lavg_2)\hDe_{\hx}\bigr)}, \sqrt{\tfrac{3\muavg}{16(L+\Lavg_2)D}} \right\},
    \end{equation}
    after $N+1$ iterations of Algorithm~\ref{alg:basic}, we have:
    \begin{equation*}
    \begin{aligned}
        f(\bx^{N+1}) &- f(x^{*}) \leq (1 - \alpha)^{N+1} \bigl(f(\bx^{0}) - f(x^{*})\bigr) \\
        &+ \tfrac{1}{\alpha} \Biggl\{\tfrac{\delta_1}{\gamma} +\tfrac{2(L+\Lavg_2)}{3}\hDe_{\hx}^3 + 2(\gamma\delta_1 + \delta_2)\hDe_{\hx}^2 \Biggr\}.
    \end{aligned}
    \end{equation*}
\end{theorem}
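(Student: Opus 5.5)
The plan is to repeat the argument of Theorem~\ref{thm:convex_case_theorem}, with one change: the quadratic and cubic error terms are no longer carried along but absorbed by the strong convexity gain. The recursion then contracts geometrically instead of needing the weights $A_k=(k+1)(k+2)(k+3)$.

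\textbf{Step 1: one-step bound along the segment.} Start from the averaged descent inequality~\eqref{eq:avg_descent_form} with $\delta_{1,k}=\delta_1$, $\delta_{2,k}=\delta_2$, and bound $\hDex{\hx}{k}\le\hDe_{\hx}$. Restrict the minimum to $x=\bx^k+\alpha(x^*-\bx^k)$ with a fixed $\alpha\in(0,1/2]$, and write $r_k=\|\bx^k-x^*\|$. Strong convexity of $f$ with constant $\muavg$ (Assumption~\ref{as:strcnvxty}) gives
\[
f(x)\le(1-\alpha)f(\bx^k)+\alpha f(x^*)-\tfrac{\muavg}{2}\alpha(1-\alpha)r_k^2 ,
\]
where $\tfrac{\muavg}{2}\alpha(1-\alpha)r_k^2\ge\tfrac{\muavg}{4}\alpha r_k^2$ because $\alpha\le 1/2$. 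Subtracting $f(x^*)$ and writing $e_k=f(\bx^k)-f(x^*)$, we get
\[
e_{k+1}\le(1-\alpha)e_k+\alpha r_k^2\Bigl[-\tfrac{\muavg}{4}+2(\gamma\delta_1+\delta_2)\alpha+\tfrac{2(L+\Lavg_2)}{3}\alpha^2 r_k\Bigr]+E ,
\]
with $E=\tfrac{\delta_1}{\gamma}+\tfrac{2(L+\Lavg_2)}{3}\hDe_{\hx}^3+2(\gamma\delta_1+\delta_2)\hDe_{\hx}^2$.

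\textbf{Step 2: make the bracket nonpositive.} It suffices to split $\tfrac{\muavg}{4}$ into two halves of $\tfrac{\muavg}{8}$ and ask for
\[
2(\gamma\delta_1+\delta_2)\alpha\le\tfrac{\muavg}{8},\qquad \tfrac{2}{3}(L+\Lavg_2)\alpha^2 r_k\le\tfrac{\muavg}{8}.
\]
\begin{itemize}
\item The first condition reads $\alpha\le \muavg/\bigl(16(\gamma\delta_1+\delta_2)\bigr)$. Take $\delta_1,\delta_2$ at their lower bounds $\hDe_{g\vert\hx}+2\Lavg_1\hDe_{\hx}$ and $\hDe_{H\vert\hx}+2\Lavg_2\hDe_{\hx}$. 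Then $\gamma\delta_1+\delta_2=\gamma\hDe_{g\vert\hx}+\hDe_{H\vert\hx}+2(\gamma\Lavg_1+\Lavg_2)\hDe_{\hx}$, which is exactly the second entry of~\eqref{eq:str_convex_alpha_thm}. If larger $\delta$'s are used, the same expression with $\delta_1,\delta_2$ should replace it in the choice of $\alpha$.
\item The second condition, once $r_k\le D$ is known, reads $\alpha^2\le 3\muavg/\bigl(16(L+\Lavg_2)D\bigr)$, which is the third entry.
\end{itemize}
Dropping the nonpositive bracket leaves $e_{k+1}\le(1-\alpha)e_k+E$.

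\textbf{Step 3: unroll.} Unrolling over $k=0,\dots,N$ gives
\[
e_{N+1}\le(1-\alpha)^{N+1}e_0+E\tsum_{j\ge0}(1-\alpha)^j=(1-\alpha)^{N+1}e_0+E/\alpha ,
\]
which is the claim.

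\textbf{Main obstacle.} The delicate point is justifying $r_k\le D$ for every $k\le N$, i.e.\ that $\bx^k$ stays in $\mathcal{L}'(\bx^0)$. I would proceed by induction on $k$, using the recursion just derived. If $r_j\le D$ for $j\le k$, then $e_{k+1}\le(1-\alpha)^{k+1}e_0+E/\alpha\le e_0+E/\alpha$. So $f(\bx^{k+1})\le f(\bx^0)+E/\alpha$, and $\bx^{k+1}\in\mathcal{L}'(\bx^0)$ provided $E/\alpha\le\e$. This is precisely the uniform-error statement of Remark~\ref{rem:sc_lebesgue_set_bound}, and it is guaranteed under the inexactness choices used later in Theorem~\ref{thm:strongly_convex_complexity}. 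Alternatively, the argument of Lemma~\ref{lem:comm_complexity} (monotonicity of $\max_i f(x_i^k)$ up to additive errors, together with convexity of the sublevel set) gives the same membership, and this is the route I would try first. In the statement as given, I would make this requirement explicit as the standing hypothesis under which $D$ is meaningful.
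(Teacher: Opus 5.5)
Your proof is correct and follows the paper's argument step for step. You restrict~\eqref{eq:avg_descent_form} to $x=\bx^k+\alpha(x^*-\bx^k)$ and use $\muavg$-strong convexity with $1-\alpha\ge\tfrac12$. You then split the resulting $\tfrac{\muavg}{4}$ margin evenly between the quadratic and cubic terms, which gives exactly the second and third entries of~\eqref{eq:str_convex_alpha_thm}, and unroll with $\sum_{j\ge0}(1-\alpha)^j\le\tfrac{1}{\alpha}$.

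Both of your caveats point to things the paper glosses over: it substitutes $\delta_1,\delta_2$ at their lower bounds, and it simply asserts $\|\bx^k-x^*\|\le D$. Your induction via $E/\alpha\le\e$ is the right way to settle the second point. By contrast, the per-node route of Lemma~\ref{lem:comm_complexity} accumulates $\delta_1/\gamma$ linearly. Under the strongly convex tolerances ($\gamma=1/D$, $\delta_1 D\le\alpha\e/6$, $N\le\alpha^{-1}\log(2e_0/\e)$) it only yields $f(x_i^k)\le f(\bx^0)+\tfrac{\e}{6}\log\tfrac{2e_0}{\e}$, which exceeds $\e$ when $e_0\gg\e$. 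So that route should not be your first choice.
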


\begin{proof}
    From ~\eqref{eq:avg_descent_form}, since $f$ is convex and $x^*$ is a minimizer, it is enough to consider points on the segment between $x^k$ and $x^*$, using the definitions of $D$ and $\hDe_{\hx}$, we obtain for any $k\geq0$ we obtain:
    \begin{align*}
        f(\bx^{k+1}) &\le f\bigl(\bx^k + \alpha_k(x^* - \bx^k)\bigr) \\
        &\quad + 2\alpha_k^2(\gamma\delta_1 + \delta_2)\|x^* - \bx^k\|^2 \\
        &\quad + \tfrac{2(L+\Lavg_2)}{3}\alpha_k^3\|x^* - \bx^k\|^3 \\
        &\quad + \biggl\{\tfrac{\delta_1}{\gamma} +  \tfrac{2(L+\Lavg_2)}{3}\hDe_{\hx}^3 + 2(\gamma\delta_1 + \delta_2)\hDe_{\hx}^2 \biggr\}.
    \end{align*}
    By the strong convexity of $f$, we have:
    \begin{align*}
        f(\bx^{k+1}) &\le (1 - \alpha_k)f(\bx^k) + \alpha_k f(x^*) \\
        &\quad - \tfrac{\muavg\alpha_k (1-\alpha_k)}{2}\|\bx^k-x^{*}\|^2 \\
        &\quad + 2\alpha_k^2(\gamma\delta_1 + \delta_2)\|\bx^k-x^{*}\|^2 \\
        &\quad + \alpha_k^3\tfrac{2(L+\Lavg_2)}{3}\|\bx^k-x^{*}\|^3 \\
        &\quad + \biggl\{\tfrac{\delta_1}{\gamma} +  \tfrac{2(L+\Lavg_2)}{3}\hDe_{\hx}^3 + 2(\gamma\delta_1 + \delta_2)\hDe_{\hx}^2 \biggr\}.
    \end{align*}
    
    Subtracting $f(x^{*})$ from both sides, we get:
    \begin{align}
        \label{eq:strong_convex_bracket}
        f(\bx^{k+1}) &- f(x^{*}) \le (1-\alpha_{k})\bigl( f(\bx^k) - f(x^{*}) \bigr) \notag \\
        &- \tfrac{\alpha_{k}}{2}\|\bx^k-x^{*}\|^2 \biggl[ \muavg(1-\alpha_{k}) \notag \\
        &- 4\alpha_k(\gamma\delta_1 + \delta_2) - \alpha_{k}^2 \tfrac{4(L+\Lavg_2)}{3}\|\bx^k-x^{*}\| \biggr] \notag \\
        &+ \tfrac{\delta_1}{\gamma} + \tfrac{2(L+\Lavg_2)}{3}\hDe_{\hx}^3 + 2(\gamma\delta_1 + \delta_2)\hDe_{\hx}^2.
    \end{align}
    
    We want to choose a constant $\alpha \in (0, 1/2]$ such that the bracketed term is non-negative. Since $\alpha \le 1/2$, we have $\muavg(1-\alpha) \ge \muavg/2$. Using the bound $\|\bx^k - x^*\| \le D$, a sufficient condition for the bracket to be non-negative is:
    \begin{equation*}
        \tfrac{\muavg}{2} - 4\alpha(\gamma\delta_1 + \delta_2) - \alpha^2\tfrac{4(L+\Lavg_2)}{3}D \ge 0.
    \end{equation*}
    This inequality holds if we ensure that $4\alpha(\gamma\delta_1 + \delta_2) \le \muavg/4$ and $\alpha^2\tfrac{4(L+\Lavg_2)}{3}D \le \muavg/4$. Substituting the definitions of $\delta_1$ and $\delta_2$, the first condition becomes $4\alpha\bigl(\gamma\hDe_{g\vert \hx} + \hDe_{H\vert \hx} + 2(\gamma\Lavg_1 + \Lavg_2)\hDe_{\hx}\bigr) \le \muavg/4$. These conditions are satisfied by our choice of $\alpha$ in~\eqref{eq:str_convex_alpha_thm}.
    
    Therefore, the bracket is non-negative, and we have:
    \begin{align*}
        f(\bx^{k+1}) - f(x^{*}) &\leq (1 - \alpha) \bigl(f(\bx^k) - f(x^{*})\bigr) \\
        &\quad+ \tfrac{\delta_1}{\gamma} +\tfrac{2(L+\Lavg_2)}{3}\hDe_{\hx}^3 + 2(\gamma\delta_1 + \delta_2)\hDe_{\hx}^2.
    \end{align*}
    Unrolling this recursion over $k = 0, \dots, N$, we obtain:
    \begin{align*}
        &f(\bx^{N+1}) - f(x^{*}) \leq (1 - \alpha)^{N+1} \bigl(f(\bx^{0}) - f(x^{*})\bigr) \\
        & + \biggl[ \tfrac{\delta_1}{\gamma} + \tfrac{2(L+\Lavg_2)}{3}\hDe_{\hx}^3 + 2(\gamma\delta_1 + \delta_2)\hDe_{\hx}^2 \biggr] \tsum_{i=0}^N (1 - \alpha)^i \\
        &\leq (1 - \alpha)^{N+1} \bigl(f(\bx^{0}) - f(x^{*})\bigr) \\
        &\quad + \tfrac{1}{\alpha} \biggl\{ \tfrac{\delta_1}{\gamma} +  \tfrac{2(L+\Lavg_2)}{3}\hDe_{\hx}^3 + 2(\gamma\delta_1 + \delta_2)\hDe_{\hx}^2 \biggr\},
    \end{align*}
    where in the last step we used the geometric series bound $\sum_{i=0}^{\infty} (1-\alpha)^i \leq \tfrac{1}{\alpha}$.
\end{proof}


\begin{customthm}{\ref{thm:strongly_convex_complexity}}
    Let the regularization parameter be chosen as $\gamma = 1/D$. Let $\e > 0$ be the desired accuracy. Also let Assumptions~\ref{as:L1},~\ref{as:L2} and Assumption~\ref{as:strcnvxty} with $\mu_i >0$ hold.

    Let inexactness levels be chosen for every $k=0, \ldots, \numIter$ as:
    \begin{equation}
    \label{eq:sc_cons_bound}
    \begin{aligned}
        \hDex{\hx}{k} &\eqdef \hDe_{\hx} 
         \le 
        \min 
            \left\{ 
                \tfrac{\alpha \e}{24 \Lavg_1 D}, 
                \sqrt[3]{\tfrac{\alpha\e}{4(L+\Lavg_2)}}, 
            \right. 
        \\
        & \qquad
            \left. 
                2D \sqrt{ \tfrac{\alpha\e \Lavg_1}{3\muavg D^2\Lavg_1 + 4\alpha\e(2\Lavg_1 + D\Lavg_2)} }, 
                \tfrac{\muavg}{64(\Lavg_1/D + \Lavg_2)} 
            \right\}, 
        \\[3ex]
        \hDeg{\hx}{k} &\eqdef \hDe_{g \vert \hx} 
         \le 
        \min 
            \left\{ 
                \tfrac{\alpha \e}{12 D}, \; 
                \tfrac{\muavg D}{32} 
            \right\}, 
        \\[1.5ex]
        \hDeH{\hx}{k} &\eqdef \hDe_{H \vert \hx} 
         \le 
        \tfrac{\muavg}{16},
    \end{aligned}
    \end{equation}
    where we define $\alpha$ as:
    \begin{equation}
        \label{eq:sc_alpha_const}
        \alpha = \min \left\{ \tfrac{1}{2}, \sqrt{\tfrac{3\muavg}{16(L+\Lavg_2)D}} \right\}.
    \end{equation}
    Assume that the parameters of Algorithm~\ref{alg:basic} satisfy $\delta_{1, k} = \delta_1 \geq \hDe_{g\vert \hx} + 2\Lavg_1\hDe_{\hx}$, $\delta_{2, k} = \delta_2 \geq \hDe_{H\vert \hx} + 2\Lavg_2\hDe_{\hx}$, and $L \ge \Lavg_2$.
    
    Then after $\numIter + 1$ iterations with 
    \begin{equation}
        \label{eq:sc_iterations_bound}
        \numIter = \left\lceil \tfrac{1}{\alpha} \log \left( \tfrac{2\bigl(f(\bx^0) - f(x^*)\bigr)}{\e} \right) \right\rceil - 1,
    \end{equation}
    the method outputs an $\e$-solution, i.e.:
    \begin{equation*}
        f(\bx^{N+1}) - f(x^*) \le \e.
    \end{equation*}
\end{customthm}

\begin{proof}
    Firstly:
    \begin{align*}
        \gamma\delta_1 + \delta_2 
        &= \gamma \bigl(\hDe_{g\vert \hx} + 2\Lavg_1\hDe_{\hx}\bigr) + \bigl(\hDe_{H\vert \hx} + 2\Lavg_2\hDe_{\hx}\bigr) \\
        &= \gamma\hDe_{g\vert \hx} + \hDe_{H\vert \hx} + 2(\gamma\Lavg_1 + \Lavg_2)\hDe_{\hx} \\
        &= \tfrac{\hDe_{g\vert \hx}}{D} + \hDe_{H\vert \hx} + 2\hDe_{\hx} \left( \tfrac{\Lavg_1}{D} + \Lavg_2 \right) \\
        &\le \tfrac{\muavg}{32} + \tfrac{\muavg}{16} + 2 \left( \tfrac{\muavg}{64(\Lavg_1/D + \Lavg_2)} \right) \left( \tfrac{\Lavg_1}{D} + \Lavg_2 \right) \\
        &= \tfrac{\muavg}{32} + \tfrac{\muavg}{16} + \tfrac{\muavg}{32} = \tfrac{\muavg}{8} \le \tfrac{\muavg}{16\alpha},
    \end{align*}
    where the last inequality holds as $\alpha \le 1/2$. Hence, we obtain:
    \begin{equation*}
        \gamma\hDe_{g\vert \hx} + \hDe_{H\vert \hx} + 2(\gamma\Lavg_1 + \Lavg_2)\hDe_{\hx} \le \tfrac{\muavg}{16\alpha},
    \end{equation*}
    and then:
    \begin{equation*}
        \alpha \le \tfrac{\muavg}{16\bigl(\gamma\hDe_{g\vert \hx} + \hDe_{H\vert \hx} + 2(\gamma\Lavg_1 + \Lavg_2)\hDe_{\hx}\bigr)}.
    \end{equation*}
    Therefore, Theorem~\ref{thm:strongly_convex_theorem} applies with $\alpha_k \equiv \alpha$.

    Next, using $\hDe_{\hx} \le \tfrac{\alpha \e}{24 \Lavg_1 D}$ and $\hDe_{g\vert \hx} \le \tfrac{\alpha \e}{12 D}$, we have:
    \begin{equation*}
        \delta_1 = \hDe_{g\vert \hx} + 2\Lavg_1\hDe_{\hx} \le \tfrac{\alpha \e}{12 D} + 2\Lavg_1 \left( \tfrac{\alpha \e}{24 \Lavg_1 D} \right) = \tfrac{\alpha \e}{6 D},
    \end{equation*}
    and using $\hDe_{H\vert \hx} \le \tfrac{\muavg}{16}$, we get:
    \begin{align*}
        \delta_2 = \hDe_{H\vert \hx} + 2\Lavg_2\hDe_{\hx} \le \tfrac{\muavg}{16} + 2\Lavg_2 \left( \tfrac{\alpha \e}{24 \Lavg_1 D} \right) = \tfrac{\muavg}{16} + \tfrac{\alpha \e \Lavg_2}{12 \Lavg_1 D}.
    \end{align*}

    Now, by the result of Theorem~\ref{thm:strongly_convex_theorem}:
    \begin{equation*}
        f(\bx^{N+1}) - f(x^*) \le (1 - \alpha)^{N+1} \bigl(f(\bx^0) - f(x^*)\bigr) + \tfrac{1}{\alpha} \left( \delta_1 D + \econs \right),
    \end{equation*}
    where $\econs = \tfrac{2(L+\Lavg_2)}{3}\hDe_{\hx}^3 + 2(\tfrac{\delta_1}{D} + \delta_2)\hDe_{\hx}^2$.

    By or choice of $N$ in~\eqref{eq:sc_iterations_bound} and using the inequality $(1-\alpha)^{N+1} \le e^{-\alpha(N+1)}$, the geometric term is bounded by
    \begin{align}
    \label{eq:iter_str_cvx}
        \notag&(1 - \alpha)^{N+1} \bigl(f(\bx^0) - f(x^*)\bigr) \\
        \notag&\quad \le \exp\biggl(-\alpha \cdot \tfrac{1}{\alpha} \log \Bigl( \tfrac{2\bigl(f(\bx^0) - f(x^*)\bigr)}{\e} \Bigr)\biggr) \\
        &\quad \qquad \times \bigl(f(\bx^0) - f(x^*)\bigr)  = \tfrac{\e}{2}.
    \end{align}

    Let us bound the consensus error term $\econs$. According to \eqref{eq:sc_cons_bound}, for the first part we have:
    \begin{equation*}
       \tfrac{2(L+\Lavg_2)}{3} \left( \sqrt[3]{\tfrac{\alpha\e}{4(L+\Lavg_2)}} \right)^3 = \tfrac{\alpha\e}{6}.
    \end{equation*}
    
    For the second part, we obtain:
    \begin{align*}
        \tfrac{\delta_1}{D} + \delta_2 &\le \tfrac{1}{D} \left( \tfrac{\alpha \e}{6 D} \right) + \left( \tfrac{\muavg}{16} + \tfrac{\alpha \e \Lavg_2}{12 \Lavg_1 D} \right) \\
        &= \tfrac{\muavg}{16} + \tfrac{\alpha \e}{12 D} \left( \tfrac{2}{D} + \tfrac{\Lavg_2}{\Lavg_1} \right).
    \end{align*}
    Using this and the third bound for $\hDe_{\hx}$ from \eqref{eq:sc_cons_bound}, we have:
    \begin{align*}
        2 \left( \tfrac{\delta_1}{D} + \delta_2 \right) \hDe_{\hx}^2 
        & \le 
        2 \left( \tfrac{\muavg}{16} + \tfrac{\alpha \e}{12 D} \left( \tfrac{2}{D} + \tfrac{\Lavg_2}{\Lavg_1} \right) \right) \hDe_{\hx}^2 
        \\[1.5ex]
        & = 
        \tfrac{3\muavg D^2\Lavg_1 + 4\alpha\e(2\Lavg_1 + D\Lavg_2)}{24 D^2 \Lavg_1} 
        \\
        & \qquad \times 
        \left( 2D \sqrt{ \tfrac{\alpha\e \Lavg_1}{3\muavg D^2\Lavg_1 + 4\alpha\e(2\Lavg_1 + D\Lavg_2)} } \right)^2 
        \\[1.5ex]
        & = 
        \tfrac{3\muavg D^2\Lavg_1 + 4\alpha\e(2\Lavg_1 + D\Lavg_2)}{24 D^2 \Lavg_1} 
        \\
        & \qquad \times 
        \tfrac{4 D^2 \alpha\e \Lavg_1}{3\muavg D^2\Lavg_1 + 4\alpha\e(2\Lavg_1 + D\Lavg_2)} 
        \\[1.5ex]
        & = 
        \tfrac{4 D^2 \alpha\e \Lavg_1}{24 D^2 \Lavg_1} 
        = 
        \tfrac{\alpha\e}{6}.
    \end{align*}
    Summing these components, the total consensus error is bounded by $\econs \le \tfrac{\alpha\e}{6} + \tfrac{\alpha\e}{6} = \tfrac{\alpha\e}{3}$.

    Therefore, the total error term satisfies:
    \begin{equation}
    \label{eq:cons_err_str_cvx}
        \tfrac{1}{\alpha} \left( \delta_1 D + \econs \right) \le \tfrac{1}{\alpha} \left( \tfrac{\alpha\e}{6} + \tfrac{\alpha\e}{3} \right) = \tfrac{\e}{2}.
    \end{equation}

    Summing ~\eqref{eq:iter_str_cvx} and ~\eqref{eq:cons_err_str_cvx}, we have $f(\bx^{N+1}) - f(x^*) \le \tfrac{\e}{2} + \tfrac{\e}{2} = \e$, which concludes the proof.
\end{proof}

\textbf{Proof of Remark~\ref{rem:sc_lebesgue_set_bound}.}
\begin{proof}
\label{prf:sc_remark_proof}
    In the general analysis~\eqref{eq:error_accumulation}, the additive error accumulates linearly, requiring $\numIter \cdot \text{error} \le \e$ to keep iterates bounded. However, in the strongly convex regime, the error accumulation is bounded uniformly: according to Theorem~\ref{thm:strongly_convex_theorem}, unrolling the recurrence for any $k \ge 0$ (in particular, for $k = N$), we obtain:
    \begin{equation*}
        f(\bx^N) - f(x^*) \le (1 - \alpha)^N \bigl(f(\bx^0) - f(x^*)\bigr) + \tfrac{1}{\alpha} \left( \delta_1 D + \econs \right).
    \end{equation*}
    Since $(1-\alpha)^N \le 1$ and Theorem~\ref{thm:strongly_convex_complexity} ensures $\tfrac{1}{\alpha} \left( \delta_1 D + \econs \right) \le \tfrac{\e}{2}$, we obtain:
    \begin{equation*}
        f(\bx^N) - f(x^*) \le f(\bx^0) - f(x^*) + \tfrac{\e}{2} \implies f(\bx^N) \le f(\bx^0) + \e.
    \end{equation*}
    Hence, our method is monotone up to a small inaccuracy $\e$.
\end{proof}

\textbf{Proof of Lemma~\ref{lem:strcvx_comm_complexity}.}
\begin{proof}
    The proof is similar to the proof of Lemma~\ref{lem:comm_complexity}. 
    
    Plugging consensus inaccuracies from ~\eqref{eq:sc_cons_bound} into the universal contraction bound~\eqref{eq:general_contraction} guarantees the communication complexities ~\eqref{eq:sc_T_complexities}.
\end{proof}

\subsection{Proofs for the Accelerated Decentralized Cubic Newton}

\label{app:accelerated_proofs}

 The proof proceeds as follows:
\begin{itemize}
    \item Lemma~\ref{lem:acc_psi_upper} establishes an upper bound on the local estimating sequence $\psi_i^k(x)$.
    \item Lemma~\ref{lem:acc_step_progress} quantifies the progress of one local cubic step~\eqref{eq:acc_step}.
    \item Lemma~\ref{lem:acc_psi_lower} proves a lower bound on $\psi_i^{k+1}(y_i^{k+1})$ with explicit error terms.
    \item Lemma~\ref{lem:estimating_sequence_final} combines these results into a two-sided estimating-sequence bound and specifies admissible choices of the parameters $\alpha_k$ and $A_k$.
    \item Lemmas~\ref{lem:err_v},~\ref{lem:err_g}, and~\ref{lem:raw_dissimilarity} control the remaining consensus and disagreement terms that appear in the final estimate.
    \item Theorem~\ref{thm:acc_convergence} combines all previous results and yields the final convergence guarantee for the decentralized accelerated method.
    \item Theorem~\ref{thm:acc_delta_choice} translates this result into explicit per-iteration requirements on the consensus errors needed to obtain an $\e$-solution.
    \item Finally, these bounds are combined with the contraction properties of the consensus routine, established in Lemma~\ref{lem:acc_comm_complexity}, to get total communication complexity.
\end{itemize}

\begin{lemma}[Upper bound on the estimating sequence]
\label{lem:acc_psi_upper}
Let Assumptions~\ref{as:L1},~\ref{as:L2} and Assumption~\ref{as:strcnvxty} hold with $\mu_i \ge 0,~\forall i \in [\numMach]$ hold.
Following~\eqref{eq:agg_deltas}, define
\begin{equation}
    \label{eq:acc_Delta1_Delta2}
    \Done{\hv}{k} \eqdef \hDeg{\hv}{k} + 2\Lavg_1\hDex{\hv}{k},
    ~
    \Dtwo{\hv}{k} \eqdef \hDeH{\hv}{k} + 2\Lavg_2\hDex{\hv}{k}.
\end{equation}
Assume that
\begin{equation}
    \label{eq:acc_psi_upper_params}
    L \ge \Lavg_2,
    \qquad
    \delta_{2,0} \ge \Dtwo{\hv}{0}.
\end{equation}
Then, for any node $i\in[\numMach]$, any $k\ge 1$, and any $x\in\R^\dimd$, the estimating sequence defined by~\eqref{eq:acc_psi1} and~\eqref{eq:acc_psi_kp} satisfies
\begin{equation}
    \label{eq:acc_psi_upper_bound}
    \begin{aligned}
        \psi_i^k(x)
        \le~
        & \tfrac{f(x)}{A_{k-1}}
        + \Done{\hv}{0}\|x_i^1-\hv_i^0\|
        + \Done{\hv}{0}\|x-\hv_i^0\| \\
        & + \tfrac{\Dtwo{\hv}{0}+\delta_{2,0}+\bk_{2,k-1}}{2}\|x-\hv_i^0\|^2 \\
        & + \tfrac{\Lavg_2+L+\bk_{3,k-1}}{6}\|x-\hv_i^0\|^3 \\
        & + \tsum_{j=1}^{k-1}\tfrac{\alpha_j}{A_j}
        \la \hg_i(x_i^{j+1})-\nabla f(x_i^{j+1}),\, x-x_i^{j+1}\ra .
    \end{aligned}
\end{equation}
\end{lemma}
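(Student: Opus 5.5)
The plan is to prove \eqref{eq:acc_psi_upper_bound} by induction on $k\ge 1$. The base case $k=1$ reuses the majorization argument of Lemma~\ref{lem:node_descent}, now applied to the model $\hnu^0_i$ built at $\hv_i^0$. The induction step follows from how the update \eqref{eq:acc_psi_kp} is built, combined with strong convexity of $f$.

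\emph{Base case $k=1$.} Here $A_0=1$, the sum over $j$ is empty, and by \eqref{eq:acc_psi1}
\[
\psi_i^1(x)=f(x_i^1)+\tfrac{\bk_{2,0}}{2}\|x-\hv_i^0\|^2+\tfrac{\bk_{3,0}}{6}\|x-\hv_i^0\|^3 .
\]
So it suffices to show
\[
f(x_i^1)\le f(x)+\Done{\hv}{0}\|x_i^1-\hv_i^0\|+\Done{\hv}{0}\|x-\hv_i^0\|+\tfrac{\Dtwo{\hv}{0}+\delta_{2,0}}{2}\|x-\hv_i^0\|^2+\tfrac{\Lavg_2+L}{6}\|x-\hv_i^0\|^3 .
\]
Lemma~\ref{lem:inexact_taylor} holds verbatim with $\hx_i^k$ replaced by $\hv_i^0$; let $\hphi_i^0(\cdot;\hv_i^0)$ be the inexact Taylor approximation \eqref{eq:inexact_taylor_2ord} at $\hv_i^0$, and write $s=x_i^1-\hv_i^0$ and $h=x-\hv_i^0$.
\begin{itemize}
\item Applying \eqref{eq:inexact_taylor_value} at $x_i^1$, together with $\Dtwo{\hv}{0}\le\delta_{2,0}$ and $\Lavg_2\le L$, gives
\[
f(x_i^1)\le f(\hv_i^0)+\hnu_i^0(x_i^1;\hv_i^0)+\Done{\hv}{0}\|s\| .
\]
Unlike in Lemma~\ref{lem:node_descent}, the linear term is kept as is, with no Young smoothing; this is why $\|x_i^1-\hv_i^0\|$ appears in the bound.
\item Since $x_i^1$ minimizes $\hnu_i^0(\cdot;\hv_i^0)$, the first term can be replaced by $f(\hv_i^0)+\hnu_i^0(x;\hv_i^0)$.
\item By definition, $f(\hv_i^0)+\hnu_i^0(x;\hv_i^0)=\hphi_i^0(x;\hv_i^0)+\tfrac{\delta_{2,0}}{2}\|h\|^2+\tfrac L6\|h\|^3$. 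The upper half of \eqref{eq:inexact_taylor_value} at $x$ then yields the claimed bound.
\end{itemize}

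\emph{Induction step $k\to k+1$.} Insert the inductive bound for $\psi_i^k(x)$ into \eqref{eq:acc_psi_kp}. The increments $\tfrac{\bk_{2,k}-\bk_{2,k-1}}{2}\|x-\hv_i^0\|^2$ and $\tfrac{\bk_{3,k}-\bk_{3,k-1}}{6}\|x-\hv_i^0\|^3$ telescope, turning $\bk_{2,k-1},\bk_{3,k-1}$ into $\bk_{2,k},\bk_{3,k}$. For the new linearization term, split $\hg_i(x_i^{k+1})=\nabla f(x_i^{k+1})+\bigl(\hg_i(x_i^{k+1})-\nabla f(x_i^{k+1})\bigr)$. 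The $\muavg$-strong convexity of $f$ (Assumption~\ref{as:strcnvxty}, with $\muavg\ge0$ allowed) gives
\[
f(x_i^{k+1})+\la\nabla f(x_i^{k+1}),x-x_i^{k+1}\ra+\tfrac{\muavg}{2}\|x-x_i^{k+1}\|^2\le f(x).
\]
Hence the new term is at most
\[
\tfrac{\alpha_k}{A_k}f(x)+\tfrac{\alpha_k}{A_k}\la\hg_i(x_i^{k+1})-\nabla f(x_i^{k+1}),x-x_i^{k+1}\ra .
\]
The error part is exactly the $j=k$ summand that extends the sum to $\sum_{j=1}^{k}$. For the function values, \eqref{eq:As} gives $A_{k-1}=A_k/(1-\alpha_k)$, so
\[
\tfrac{f(x)}{A_{k-1}}+\tfrac{\alpha_k}{A_k}f(x)=\tfrac{(1-\alpha_k)+\alpha_k}{A_k}f(x)=\tfrac{f(x)}{A_k}.
\]
The $\Done{\hv}{0}$ and $\Dtwo{\hv}{0}+\delta_{2,0}$ terms carry over unchanged, which completes the induction.

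The main obstacle is the base case. One has to check that Lemma~\ref{lem:inexact_taylor} transfers to the point $\hv_i^0$, with the global $f(\hv_i^0)$ entering $\hphi$, and to keep the $\|x_i^1-\hv_i^0\|$ term unsmoothed, since the accelerated model \eqref{eq:model_2ord_acc} has no $\delta_1$ regularization. The induction step is mechanical once the identity for $1/A_{k-1}+\alpha_k/A_k$ is in place.
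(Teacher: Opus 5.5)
Your proposal is correct and matches the paper's proof: the base case is the same chain (inexact Taylor bound at $x_i^1$ with $\Dtwo{\hv}{0}\le\delta_{2,0}$ and $\Lavg_2\le L$, then minimality of $x_i^1$ for $\hnu_i^0(\cdot;\hv_i^0)$, then the inexact Taylor bound at $x$). Your induction step, built on $\tfrac{1}{A_{k-1}}+\tfrac{\alpha_k}{A_k}=\tfrac{1}{A_k}$, is just the stepwise form of the paper's argument, which unrolls \eqref{eq:acc_psi_kp} directly and uses the telescoping identity $\sum_{j=1}^{k-1}\alpha_j/A_j = 1/A_{k-1}-1$ together with the same gradient split and strong-convexity bound.
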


\begin{proof}
By the definition of the accelerated cubic model~\eqref{eq:model_2ord_acc},
\begin{equation*}
    \hnu_i^0(x;\hv_i^0)
    =
    \hphi_i^0(x;\hv_i^0) - f(\hv_i^0)
    + \tfrac{\delta_{2,0}}{2}\|x-\hv_i^0\|^2
    + \tfrac{L}{6}\|x-\hv_i^0\|^3.
\end{equation*}
Since $x_i^1$ minimizes $\hnu_i^0(\cdot;\hv_i^0)$, for any $x\in\R^\dimd$ we have
\begin{equation}
    \label{eq:acc_x1_opt}
    \begin{aligned}
        \hphi_i^0(x_i^1;\hv_i^0) 
        & +  \tfrac{\delta_{2,0}}{2}\|x_i^1-\hv_i^0\|^2
        + \tfrac{L}{6}\|x_i^1-\hv_i^0\|^3
        \\
        \le~ & \hphi_i^0(x;\hv_i^0) 
        + \tfrac{\delta_{2,0}}{2}\|x-\hv_i^0\|^2
        + \tfrac{L}{6}\|x-\hv_i^0\|^3.
    \end{aligned}
\end{equation}

By Lemma~\ref{lem:inexact_taylor} applied at the initial consensus point $\hv_i^0$, with $x=x_i^1$, we have
\begin{equation}
    \label{eq:acc_fx1_bound_1}
    \begin{aligned}
        f(x_i^1)
        \le~
        & \hphi_i^0(x_i^1;\hv_i^0)
        + (\hDeg{\hv}{0} + 2\Lavg_1\hDex{\hv}{0})\|x_i^1-\hv_i^0\| \\
        & + \tfrac{\hDeH{\hv}{0} + 2\Lavg_2\hDex{\hv}{0}}{2}\|x_i^1-\hv_i^0\|^2
        + \tfrac{\Lavg_2}{6}\|x_i^1-\hv_i^0\|^3\\
        \le~
        & \hphi_i^0(x_i^1;\hv_i^0)
        + \Done{\hv}{0}\|x_i^1-\hv_i^0\| \\
        & + \tfrac{\Dtwo{\hv}{0}}{2}\|x_i^1-\hv_i^0\|^2
        + \tfrac{\Lavg_2}{6}\|x_i^1-\hv_i^0\|^3,
    \end{aligned}
\end{equation}
where the second inequality follows from the definition~\eqref{eq:acc_Delta1_Delta2}.

Combining~\eqref{eq:acc_fx1_bound_1} with~\eqref{eq:acc_x1_opt} and using~\eqref{eq:acc_psi_upper_params}, we get
\begin{equation}
    \label{eq:acc_fx1_bound_2}
    \begin{aligned}
        f(x_i^1)
        \le~
        & \hphi_i^0(x;\hv_i^0)
        + \Done{\hv}{0}\|x_i^1-\hv_i^0\| \\
        & + \tfrac{\delta_{2,0}}{2}\|x-\hv_i^0\|^2
        + \tfrac{L}{6}\|x-\hv_i^0\|^3.
    \end{aligned}
\end{equation}

Applying Lemma~\ref{lem:inexact_taylor} once again, now to $\hphi_i^0(x;\hv_i^0)$:
\begin{equation*}
    \begin{aligned}
        \hphi_i^0(x;\hv_i^0)
        \le
        f(x)
        + \Done{\hv}{0}\|x-\hv_i^0\|
        +& \tfrac{\Dtwo{\hv}{0}}{2}\|x-\hv_i^0\|^2 \\
        &+ \tfrac{\Lavg_2}{6}\|x-\hv_i^0\|^3,
    \end{aligned}
\end{equation*}
and putting it into~\eqref{eq:acc_fx1_bound_2} yields
\begin{equation*}
    \begin{aligned}
        f(x_i^1)
        \le~
        & f(x)
        + \Done{\hv}{0}\|x_i^1-\hv_i^0\|
        + \Done{\hv}{0}\|x-\hv_i^0\| \\
        & + \tfrac{\Dtwo{\hv}{0}+\delta_{2,0}}{2}\|x-\hv_i^0\|^2
        + \tfrac{\Lavg_2+L}{6}\|x-\hv_i^0\|^3.
    \end{aligned}
\end{equation*}

Therefore, by the initialization~\eqref{eq:acc_psi1},
\begin{equation}
    \label{eq:acc_psi1_upper}
    \begin{aligned}
        \psi_i^1(x)
        \le~
        & f(x)
        + \Done{\hv}{0}\|x_i^1-\hv_i^0\|
        + \Done{\hv}{0}\|x-\hv_i^0\| \\
        & + \tfrac{\Dtwo{\hv}{0}+\delta_{2,0}+\bk_{2,0}}{2}\|x-\hv_i^0\|^2 \\
        & + \tfrac{\Lavg_2+L+\bk_{3,0}}{6}\|x-\hv_i^0\|^3.
    \end{aligned}
\end{equation}

Now let $k\ge 1$. Expanding the recursion~\eqref{eq:acc_psi_kp}, we obtain
\begin{equation}
    \label{eq:acc_psi_expand}
    \begin{aligned}
        \psi_i^k(x)
        =~
        & \psi_i^1(x)
        + \tfrac{\bk_{2,k-1}-\bk_{2,0}}{2}\|x-\hv_i^0\|^2 \\
        & + \tfrac{\bk_{3,k-1}-\bk_{3,0}}{6}\|x-\hv_i^0\|^3 \\
        & + \tsum_{j=1}^{k-1}\tfrac{\alpha_j}{A_j}
        \left(
            f(x_i^{j+1})
            + \la \hg_i(x_i^{j+1}), x-x_i^{j+1}\ra 
        \right.\\
        & \quad \quad \quad  \quad \quad + \left.
            \tfrac{\muavg}{2}\|x-x_i^{j+1}\|^2
        \right)\\
        =~
        & \psi_i^1(x)
        + \tfrac{\bk_{2,k-1}-\bk_{2,0}}{2}\|x-\hv_i^0\|^2 \\
        & + \tfrac{\bk_{3,k-1}-\bk_{3,0}}{6}\|x-\hv_i^0\|^3 \\
        & + \tsum_{j=1}^{k-1}\frac{\alpha_j}{A_j}
       \left(
            f(x_i^{j+1})
            + \la \nabla f(x_i^{j+1}), x-x_i^{j+1}\ra
       \right. \\
        & \quad \quad \quad  \quad \quad + \left.
            \tfrac{\muavg}{2}\|x-x_i^{j+1}\|^2
        \right)\\
        & + \tsum_{j=1}^{k-1}\frac{\alpha_j}{A_j}
        \la \hg_i(x_i^{j+1})-\nabla f(x_i^{j+1}), x-x_i^{j+1}\ra .
    \end{aligned}
\end{equation}

Since $f$ is $\muavg$-strongly convex, for all $j\ge 1$,
\begin{equation*}
    f(x_i^{j+1}) + \la \nabla f(x_i^{j+1}), x-x_i^{j+1} \ra + \tfrac{\muavg}{2}\|x-x_i^{j+1}\|^2 \le f(x).
\end{equation*}
Hence,
\begin{equation}
    \label{eq:acc_sum_convex}
    \begin{aligned}
        \tsum_{j=1}^{k-1}\tfrac{\alpha_j}{A_j}
        \left(
            f(x_i^{j+1})
        \right .
        +
        &
        \left.
            \la \nabla f(x_i^{j+1}), x-x_i^{j+1}\ra
            + \tfrac{\muavg}{2}\|x-x_i^{j+1}\|^2 
        \right)\\
        &\le
        f(x)\tsum_{j=1}^{k-1}\tfrac{\alpha_j}{A_j}.
    \end{aligned}
\end{equation}
Using the relation $A_j=(1-\alpha_j)A_{j-1}$, we have
\begin{equation*}
    \tfrac{\alpha_j}{A_j}
    =
    \tfrac{1}{A_j} - \tfrac{1}{A_{j-1}},
\end{equation*}
and therefore
\begin{equation}
    \label{eq:acc_alpha_A_sum}
    \tsum_{j=1}^{k-1}\tfrac{\alpha_j}{A_j}
    =
    \tsum_{j=1}^{k-1}\left(\tfrac{1}{A_j}-\tfrac{1}{A_{j-1}}\right)
    =
    \frac{1}{A_{k-1}}-1.
\end{equation}

Substituting~\eqref{eq:acc_psi1_upper},~\eqref{eq:acc_sum_convex}, and~\eqref{eq:acc_alpha_A_sum} into~\eqref{eq:acc_psi_expand}, we get
\begin{equation*}
    \begin{aligned}
        \psi_i^k(x)
        \le~
        & \tfrac{f(x)}{A_{k-1}}
        + \Done{\hv}{0}\|x_i^1-\hv_i^0\|
        + \Done{\hv}{0}\|x-\hv_i^0\| \\
        & + \tfrac{\Dtwo{\hv}{0}+\delta_{2,0}+\bk_{2,k-1}}{2}\|x-\hv_i^0\|^2 \\
        & + \tfrac{\Lavg_2+L+\bk_{3,k-1}}{6}\|x-\hv_i^0\|^3 \\
        & + \tsum_{j=1}^{k-1}\tfrac{\alpha_j}{A_j}
        \la \hg_i(x_i^{j+1})-\nabla f(x_i^{j+1}), x-x_i^{j+1}\ra .
    \end{aligned}
\end{equation*}
\end{proof}

\begin{lemma}[Progress of the accelerated cubic step]
\label{lem:acc_step_progress}
Fix an iteration $k\ge 1$ and a node $i\in[\numMach]$. Let Assumptions~\ref{as:L1},~\ref{as:L2}, and Assumption~\ref{as:strcnvxty} hold with $\mu_i \ge 0$.  
Then the following holds.
\begin{itemize}
    \item If 
        \begin{equation}
            \label{eq:acc_case1_cond}
            \Done{\hv}{k}
            \ge
            \Dtwo{\hv}{k}\|\hv_i^k-x_i^{k+1}\|
            + \tfrac{\Lavg_2}{2}\|\hv_i^k-x_i^{k+1}\|^2,
        \end{equation}
        then
        \begin{equation}
            \label{eq:acc_case1_bound}
            \begin{aligned}
                \|\nabla f(x_i^{k+1})\|
                \le
                \left(
                    \max\left\{
                        \tfrac{\delta_{2,k}}{\Dtwo{\hv}{k}},
                        \tfrac{L}{\Lavg_2}
                    \right\}
                    + 2
                \right) \Done{\hv}{k}
            \end{aligned}
        \end{equation}
    \item otherwise, if 
        \begin{equation}
            \label{eq:acc_case2_params}
            \delta_{2,k}\ge 3\Dtwo{\hv}{k},
            \qquad
            L\ge 3\Lavg_2,
        \end{equation}
        then
        \begin{equation}
        \label{eq:acc_case2_bound}
            \begin{aligned}
                \left \langle \nabla \right . & \left. f(x_i^{k+1}),  \hv_i^k-x_i^{k+1}\right \rangle
                \ge
                 \tfrac{\sqrt{5}}{3} \\
                &~\times \min
                    \left\{\tfrac{\|\nabla f(x_i^{k+1})\|^2}{2(2\Dtwo{\hv}{k}+\delta_{2,k})},
                    \tfrac{\|\nabla f(x_i^{k+1})\|^{3/2}}{(2\Lavg_2+L)^{1/2}}
                \right\}.
            \end{aligned}
        \end{equation}
\end{itemize}
\end{lemma}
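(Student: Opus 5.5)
The plan is to use the first-order optimality condition of the cubic subproblem~\eqref{eq:acc_step}, together with the gradient bound~\eqref{eq:inexact_taylor_grad} of Lemma~\ref{lem:inexact_taylor} applied at the consensus point $\hv_i^k$. Denote $s \eqdef x_i^{k+1}-\hv_i^k$ and $r\eqdef\|s\|$. Since $x_i^{k+1}$ minimizes $\hnu_i^k(\cdot;\hv_i^k)$,
\begin{equation*}
\hg_i(\hv_i^k) + \hH_i(\hv_i^k)s + \delta_{2,k}s + \tfrac{L}{2}r\,s = 0 .
\end{equation*}
Hence $\nabla\hphi_i^k(x_i^{k+1};\hv_i^k) = -c\,s$ with $c\eqdef \delta_{2,k}+\tfrac{L}{2}r$. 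Set $e\eqdef \nabla f(x_i^{k+1})-\nabla\hphi_i^k(x_i^{k+1};\hv_i^k)$. Lemma~\ref{lem:inexact_taylor}, with $\hx$ replaced by $\hv$, gives
\begin{equation*}
\|e\|\le \Done{\hv}{k}+\Dtwo{\hv}{k}r+\tfrac{\Lavg_2}{2}r^2 .
\end{equation*}
The basic identity for both cases is $\nabla f(x_i^{k+1}) = -c\,s + e$.

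\emph{Case 1.} Condition~\eqref{eq:acc_case1_cond} gives $\|e\|\le 2\Done{\hv}{k}$. Then
\begin{equation*}
\|\nabla f(x_i^{k+1})\|\le \delta_{2,k}r+\tfrac{L}{2}r^2+2\Done{\hv}{k}.
\end{equation*}
Termwise comparison gives $\delta_{2,k}r+\tfrac L2 r^2\le \max\{\delta_{2,k}/\Dtwo{\hv}{k},L/\Lavg_2\}\,(\Dtwo{\hv}{k}r+\tfrac{\Lavg_2}{2}r^2)$. By~\eqref{eq:acc_case1_cond}, the bracket is at most $\Done{\hv}{k}$, which yields~\eqref{eq:acc_case1_bound}.

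\emph{Case 2.} Now $\Done{\hv}{k}<\Dtwo{\hv}{k}r+\tfrac{\Lavg_2}{2}r^2$, so $\|e\|\le 2\Dtwo{\hv}{k}r+\Lavg_2r^2$. Using~\eqref{eq:acc_case2_params}, this becomes $\|e\|\le \tfrac23\delta_{2,k}r+\tfrac23\cdot\tfrac L2 r^2=\tfrac23 c\,r$. Squaring $\|\nabla f(x_i^{k+1})+c\,s\|^2=\|e\|^2$ and expanding gives
\begin{equation*}
\la \nabla f(x_i^{k+1}),-s\ra \ge \frac{\|\nabla f(x_i^{k+1})\|^2 + c^2r^2-\|e\|^2}{2c}\ge \frac{\|\nabla f(x_i^{k+1})\|^2+\tfrac59c^2r^2}{2c}.
\end{equation*}
By AM--GM this is at least $\tfrac{\sqrt5}{3}\|\nabla f(x_i^{k+1})\|\,r$. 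This is the key step, and it is where the factors $3$ in~\eqref{eq:acc_case2_params} are used.

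It remains to lower-bound $r$ in terms of $\|\nabla f(x_i^{k+1})\|$. From the identity and the bound on $\|e\|$,
\begin{equation*}
\|\nabla f(x_i^{k+1})\|\le cr+\|e\|\le (\delta_{2,k}+2\Dtwo{\hv}{k})r+\tfrac{L+2\Lavg_2}{2}r^2 .
\end{equation*}
One of the two terms must be at least $\tfrac12\|\nabla f(x_i^{k+1})\|$. Therefore either $r\ge \|\nabla f(x_i^{k+1})\|/(2(2\Dtwo{\hv}{k}+\delta_{2,k}))$ or $r\ge (\|\nabla f(x_i^{k+1})\|/(L+2\Lavg_2))^{1/2}$. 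Multiplying by $\tfrac{\sqrt5}{3}\|\nabla f(x_i^{k+1})\|$ and taking the minimum gives~\eqref{eq:acc_case2_bound}.

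The main obstacle is the constant bookkeeping in Case~2. One must check that $\|e\|\le \tfrac23 cr$ follows from the case-2 inequality, not from~\eqref{eq:acc_case1_cond}. One must also ensure that the AM--GM step produces exactly the factor $\sqrt5/3$.
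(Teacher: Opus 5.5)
Your proposal is correct and follows essentially the same route as the paper's proof. Both arguments rewrite the optimality condition as $\nabla\hphi_i^k(x_i^{k+1};\hv_i^k)=-c\,s$ and use the gradient bound~\eqref{eq:inexact_taylor_grad} at $\hv_i^k$; both use termwise comparison in Case~1, and in Case~2 both derive $\|e\|\le\tfrac23 c r$, square and apply AM--GM to get $\tfrac{\sqrt5}{3}$, and finish with the two-regime lower bound on $\|s\|$. Your strict inequality in Case~2 also shows $r>0$ (hence $c>0$), which justifies the division by $2c$ that the paper performs implicitly.
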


\begin{proof}
By the optimality condition for the step~\eqref{eq:acc_step},
\begin{equation}
    \label{eq:acc_opt_cond}
    0
    =
    \nabla \hphi_i^k(x_i^{k+1};\hv_i^k)
    + \delta_{2,k}s_i^k
    + \tfrac{L}{2}\|s_i^k\|s_i^k,
\end{equation}
where $s_i^k \eqdef x_i^{k+1}-\hv_i^k$.

Hence,
\begin{equation*}
    \begin{aligned}
        \|\nabla f&(x_i^{k+1})\|
        \\
        =~&
        \left\|
            \nabla f(x_i^{k+1})
            - \nabla \hphi_i^k(x_i^{k+1};\hv_i^k)
            + \delta_{2,k}s_i^k
            + \tfrac{L}{2}\|s_i^k\|s_i^k
        \right\| \\
        \le~
        &\|\nabla f(x_i^{k+1})-\nabla \hphi_i^k(x_i^{k+1};\hv_i^k)\|
        + \delta_{2,k}\|s_i^k\|
        + \tfrac{L}{2}\|s_i^k\|^2.
    \end{aligned}
\end{equation*}
Applying~\eqref{eq:inexact_taylor_grad} at $x=x_i^{k+1}$, we obtain
\begin{equation}
    \label{eq:acc_grad_bound_main}
    \|\nabla f(x_i^{k+1})\|
    \le
    \Done{\hv}{k}
    + (\Dtwo{\hv}{k}+\delta_{2,k})\|s_i^k\|
    + \tfrac{\Lavg_2+L}{2}\|s_i^k\|^2.
\end{equation}

Assume first that~\eqref{eq:acc_case1_cond} holds. Then
\begin{equation}
    \label{eq:acc_case1_grad}
    \begin{aligned}
        \|\nabla f(x_i^{k+1})\|
        \le~
        & 2\Done{\hv}{k}
        + \delta_{2,k}\|s_i^k\|
        + \tfrac{L}{2}\|s_i^k\|^2 \\
        \le~
        & 2\Done{\hv}{k}
        + \max\left\{
            \tfrac{\delta_{2,k}}{\Dtwo{\hv}{k}},
            \tfrac{L}{\Lavg_2}
        \right\} \\
        & \times 
        \left(
            \Dtwo{\hv}{k}\|s_i^k\|
            + \tfrac{\Lavg_2}{2}\|s_i^k\|^2
        \right) \\
        \le~
        & \left(
            \max\left\{
                \tfrac{\delta_{2,k}}{\Dtwo{\hv}{k}},
                \tfrac{L}{\Lavg_2}
            \right\}
            + 2
        \right)\Done{\hv}{k}.
    \end{aligned}
\end{equation}
This proves~\eqref{eq:acc_case1_bound}.

Now assume that
\begin{equation}
    \label{eq:acc_case2_cond}
    \Done{\hv}{k}
    \le
    \Dtwo{\hv}{k}\|\hv_i^k-x_i^{k+1}\|
    + \tfrac{\Lavg_2}{2}\|\hv_i^k-x_i^{k+1}\|^2,
\end{equation}
holds. Then, by~\eqref{eq:inexact_taylor_grad},
\begin{equation}
    \label{eq:acc_model_grad_case2}
    \begin{aligned}
        \|\nabla \hphi_i^k(x_i^{k+1};\hv_i^k)-&\nabla f(x_i^{k+1})\|
        \\
        \le~ & \Done{\hv}{k}
        + \Dtwo{\hv}{k}\|s_i^k\|
        + \tfrac{\Lavg_2}{2}\|s_i^k\|^2 \\
        \le~
        & 2\Dtwo{\hv}{k}\|s_i^k\|
        + \Lavg_2\|s_i^k\|^2\\
        \stackrel{\eqref{eq:acc_case2_params}}{\le} 
        & \tfrac{2}{3}\delta_{2,k}\|s_i^k\|
        + \tfrac{L}{3}\|s_i^k\|^2 \\
        =~
        & \tfrac{2}{3}
        \left(
            \delta_{2,k}
            + \tfrac{L}{2}\|s_i^k\|
        \right)\|s_i^k\|.
    \end{aligned}
\end{equation}
Denote $\zeta_{i, k} \eqdef \delta_{2,k} + \tfrac {L}{2}\|s_i^k\|$. Then~\eqref{eq:acc_opt_cond} can be rewritten as
\begin{equation}
    \label{eq:acc_opt_cond_zeta}
    \nabla \hphi_i^k(x_i^{k+1};\hv_i^k) = -\zeta_{i, k} s_i^k.
\end{equation}
Therefore, by~\eqref{eq:acc_model_grad_case2}, 
\begin{equation*}
    \begin{aligned}
    \tfrac{4}{9}(\zeta_{i, k})^2\|s_i^k\|^2
    \ge&~
    \|\nabla \hphi_i^k(x_i^{k+1};\hv_i^k)-\nabla f(x_i^{k+1})\|^2 \\
    \stackrel{\eqref{eq:acc_opt_cond_zeta}}{=}&~
    \|-\zeta_{i, k} s_i^k-\nabla f(x_i^{k+1})\|^2 \\
    =&~
    \|\nabla f(x_i^{k+1})\|^2
    - 2\zeta_{i, k} \la \nabla f(x_i^{k+1}), \hv_i^k-x_i^{k+1}\ra \\
    &+ (\zeta_{i, k})^2\|s_i^k\|^2.
    \end{aligned}
\end{equation*}
Rearranging terms, we obtain
\begin{equation*}
    2\zeta_{i, k} \la \nabla f(x_i^{k+1}), \hv_i^k-x_i^{k+1}\ra
    \ge
    \|\nabla f(x_i^{k+1})\|^2
    + \tfrac{5}{9}(\zeta_{i, k})^2\|s_i^k\|^2.
\end{equation*}
Using $a^2+b^2\ge 2ab$ with $a = \|\nabla f(x_i^{k+1})\|$, $b = \tfrac{\sqrt{5}}{3}\zeta_{i, k}\|s_i^k\|$, we get
\begin{equation}
    \label{eq:acc_inner_basic}
    \la \nabla f(x_i^{k+1}), \hv_i^k-x_i^{k+1}\ra
    \ge
    \tfrac{\sqrt{5}}{3}\|s_i^k\|\,\|\nabla f(x_i^{k+1})\|.
\end{equation}

It remains to lower bound $\|s_i^k\|$.
Combining~\eqref{eq:acc_grad_bound_main} with~\eqref{eq:acc_case2_cond}, we obtain
\begin{equation}
    \label{eq:acc_grad_bound_case2}
    \|\nabla f(x_i^{k+1})\|
    \le
    (2\Dtwo{\hv}{k}+\delta_{2,k})\|s_i^k\|
    + \tfrac{2\Lavg_2+L}{2}\|s_i^k\|^2.
\end{equation}
Now there are two possibilities.

If
\begin{equation*}
    (2\Dtwo{\hv}{k}+\delta_{2,k})\|s_i^k\|
    \ge
    \tfrac{2\Lavg_2+L}{2}\|s_i^k\|^2,
\end{equation*}
then from~\eqref{eq:acc_grad_bound_case2} we have
\begin{equation}
    \label{eq:acc_s_lower_1}
    \|s_i^k\|
    \ge
    \tfrac{\|\nabla f(x_i^{k+1})\|}{2(2\Dtwo{\hv}{k}+\delta_{2,k})}.
\end{equation}
Otherwise,
\begin{equation*}
    \tfrac{2\Lavg_2+L}{2}\|s_i^k\|^2
    \ge
    (2\Dtwo{\hv}{k}+\delta_{2,k})\|s_i^k\|,
\end{equation*}
and therefore
\begin{equation}
    \label{eq:acc_s_lower_2}
    \|s_i^k\|
    \ge
    \left(
        \tfrac{\|\nabla f(x_i^{k+1})\|}{2\Lavg_2+L}
    \right)^{1/2}.
\end{equation}
Combining~\eqref{eq:acc_inner_basic}, \eqref{eq:acc_s_lower_1}, and~\eqref{eq:acc_s_lower_2}, we obtain~\eqref{eq:acc_case2_bound}.
\end{proof}

\begin{lemma}[(\cite{ghadimi2017second},~Lemma~7),~(\cite{agafonov2023inexact},~Lemma~7)]
    \label{lm:argmin}
    Let $h(x)$ be a $\lambda$-strongly convex function ($\lambda \ge 0$), $x^0 \in \mathbb{R}^\dimd$, $p \ge 2$, $\theta_i \ge 0$~~for $i = 2,\ldots, p + 1$ and 
    $$y = \arg \min \limits_{x \in \mathbb{R}^\dimd} \{\widetilde{h}(x) = h(x) + \tsum_{i = 2}^{p + 1}  \tfrac{\theta_i}{i} \|x -x^0\|^i\}.$$
    Then, for all $x\in \mathbb{R}^\dimd$,
    $$\widetilde{h}(x) \ge \widetilde{h}(y) + \tfrac{\lambda}{2}\|x - y\|^2 + \tsum_{i = 2}^{p + 1} \left(\frac{1}{2}\right)^{i - 2} \tfrac{\theta_i}{i} \|x - y\|^i .$$
\end{lemma}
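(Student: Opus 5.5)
The plan is the standard argument for uniformly convex regularized functions, as in \cite{ghadimi2017second}. Write $\widetilde h = h + \rho$ with $\rho(x)=\tsum_{i=2}^{p+1}\tfrac{\theta_i}{i}\|x-x^0\|^i$. Each $\rho_i(x)=\tfrac{\theta_i}{i}\|x-x^0\|^i$ is convex, so $\widetilde h$ is $\lambda$-strongly convex and its minimizer $y$ exists and is unique. By the optimality condition there are subgradients $g_h\in\partial h(y)$ and $g_i=\nabla\rho_i(y)$ with $g_h+\tsum_i g_i=0$.

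The argument separates the two pieces. For $h$, strong convexity gives
\[
h(x)\ge h(y)+\la g_h,x-y\ra+\tfrac{\lambda}{2}\|x-y\|^2 .
\]
For each power term, the key step is the uniform convexity inequality for $\rho_i$:
\[
\rho_i(x)\ge\rho_i(y)+\la\nabla\rho_i(y),x-y\ra+\left(\tfrac12\right)^{i-2}\tfrac{\theta_i}{i}\|x-y\|^i .
\]
Summing these $p+1$ inequalities and using $g_h+\tsum_i g_i=0$ removes all linear terms, which gives the claimed bound.

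It remains to prove the uniform convexity inequality. This is the main technical step, and it is where the constant $(1/2)^{i-2}$ comes from. Since $\theta_i$ and $x^0$ only scale and shift, set $d(u)=\tfrac1i\|u\|^i$ with $u=y-x^0$ and $v=x-x^0$. I would first use the known monotonicity estimate (Nesterov)
\[
\la\|a\|^{i-2}a-\|b\|^{i-2}b,\,a-b\ra\ge\left(\tfrac12\right)^{i-2}\|a-b\|^i .
\]
Then I would integrate along the segment $u_t=u+t(v-u)$:
\[
d(v)-d(u)-\la\nabla d(u),v-u\ra=\int_0^1\tfrac1t\la\nabla d(u_t)-\nabla d(u),u_t-u\ra\,dt .
\]
The monotonicity estimate bounds the integrand below by $\left(\tfrac12\right)^{i-2}t^{i-1}\|v-u\|^i$, and integrating gives $\left(\tfrac12\right)^{i-2}\tfrac1i\|v-u\|^i$.

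The monotonicity estimate itself reduces, by symmetry and rescaling, to a two-dimensional inequality. For $i=2$ it is an identity. For $i\ge3$ I would use the expansion
\[
\la\|a\|^{i-2}a-\|b\|^{i-2}b,\,a-b\ra=\tfrac12\left(\|a\|^{i-2}+\|b\|^{i-2}\right)\|a-b\|^2+\tfrac12\left(\|a\|^{i-2}-\|b\|^{i-2}\right)\left(\|a\|^2-\|b\|^2\right).
\]
The second term is nonnegative. For the first, convexity of $s\mapsto s^{i-2}$ together with $\|a\|+\|b\|\ge\|a-b\|$ gives $\tfrac12\left(\|a\|^{i-2}+\|b\|^{i-2}\right)\ge\left(\tfrac{\|a-b\|}{2}\right)^{i-2}$. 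This yields exactly the constant $(1/2)^{i-2}$.
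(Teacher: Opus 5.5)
Your proof is correct and is essentially the standard argument behind this lemma. The paper gives no proof of its own and defers to \cite{ghadimi2017second} and \cite{agafonov2023inexact}, which combine the optimality condition at $y$, the strong-convexity inequality for $h$, and Nesterov's uniform convexity of $\tfrac1i\|\cdot-x^0\|^i$ with constant $(1/2)^{i-2}$; you additionally derive that last fact from the gradient-monotonicity estimate by the usual integration along the segment, which the cited works simply quote from Nesterov.

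The only blemish is your side claim that $y$ always exists and is unique. This can fail for $\lambda=0$ with all $\theta_i=0$, but the lemma presupposes $y$, so the argument is unaffected.
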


\begin{lemma}[Lower bound of the estimating sequence]
\label{lem:acc_psi_lower}
Let $f$ be $\muavg$-strongly ($\muavg \geq 0$) convex, let Assumptions~\ref{as:L1},~\ref{as:L2} hold, let $\{\bk_{2,k}\}_{k \geq 0}$,~$\{\bk_{3,k}\}_{k \geq 0}$ be non-negative non-decreasing sequences, and define
\begin{equation}
    \label{eq:err}
    \begin{aligned}
        \err_i^k
        =~ & \tsum_{j=1}^{k-1} \tfrac{\alpha_j}{A_j} \la \hg_i(x_i^{j+1}) - \nabla f(x_i^{j+1}), y_i^{j+1} - x_{i}^{j+1} \ra \\
        & + \tsum_{j=1}^{k-1} \tfrac{1}{A_j} \la \nabla f(x_i^{j+1}), v_i^{j} - \hv_{i}^{j} \ra.
    \end{aligned}
\end{equation}
Assume also that for $k \geq 1, ~i \in [\numMach]$, $\|y_i^{k} - x^*\| \leq \bR$ and 
\begin{equation}
    \label{eq:psi_assum}
    \psi_i^k(y_i^k) \ge \tfrac{f(x_i^k)}{A_{k-1}} + \err_i^k - r_i^k,
\end{equation}
for some $r_i^k \geq 0$.
Then, for any node $i\in[\numMach]$ and any $k\ge 1$, we have
\begin{equation*}
    \begin{aligned}
        \psi_i^{k+1}(y_i^{k+1})
        \ge~
        & \tfrac{f(x_i^{k+1})}{A_{k}} + \tfrac{1}{A_{k}} \la \nabla f(x_i^{k+1}), \hv_i^k - x_i^{k+1} \ra \\
        & + \tfrac{2\bk_{2,k-1} + \lambda_k}{4}\|y_i^{k+1} - y_i^k\|^2 \\
        & + \tfrac{2\bk_{3,k-1} + 3\lambda_k \bR^{-1}}{24}\|y_i^{k+1} - y_i^k\|^3 \\
        & + \tfrac{\alpha_k}{A_k} \la \nabla f(x_i^{k+1}), y_i^{k+1} - y_i^k \ra \\
        & + \err_i^{k+1}  - r_i^k,
    \end{aligned}
\end{equation*}
where $\lambda_k = \ls \tfrac{1}{A_{k-1}} - 1 \rs \muavg$.
\end{lemma}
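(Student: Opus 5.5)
The plan is to follow the standard estimating-sequence lower bound of \cite{nesterov2008accelerating,agafonov2023inexact}. The first step is to write $\psi_i^{k+1}$ as $\psi_i^k$ plus the new linear-quadratic piece. The function $\psi_i^k$ equals a $\lambda_k$-strongly convex function plus the regularizers $\tfrac{\bk_{2,k-1}}{2}\|x-\hv_i^0\|^2+\tfrac{\bk_{3,k-1}}{6}\|x-\hv_i^0\|^3$. The strong convexity comes from the accumulated terms $\tfrac{\alpha_j}{A_j}\tfrac{\muavg}{2}\|x-x_i^{j+1}\|^2$, whose coefficients sum to $\muavg(\tfrac{1}{A_{k-1}}-1)$ by \eqref{eq:acc_alpha_A_sum}. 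I then apply Lemma~\ref{lm:argmin} with $p=2$, $\theta_2=\bk_{2,k-1}$, $\theta_3=\bk_{3,k-1}/2$ at the minimizer $y_i^k$. This gives
\[
\psi_i^k(x)\ge \psi_i^k(y_i^k)+\tfrac{\lambda_k}{2}\|x-y_i^k\|^2+\tfrac{\bk_{2,k-1}}{2}\|x-y_i^k\|^2+\tfrac{\bk_{3,k-1}}{12}\|x-y_i^k\|^3 .
\]
To produce the stated coefficients $\tfrac{2\bk_{2,k-1}+\lambda_k}{4}$ and $\tfrac{2\bk_{3,k-1}+3\lambda_k\bR^{-1}}{24}$, I split the strong-convexity term $\tfrac{\lambda_k}{2}\|\cdot\|^2$ into two halves. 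I keep one half as a quadratic. For the other half I use $\|x-y_i^k\|\le 2\bR$ (from the boundedness $\|y_i^{j}-x^*\|\le\bR$) to convert it into the cubic $\tfrac{\lambda_k}{8\bR}\|x-y_i^k\|^3$. The nondecreasing property of $\bk$ is used to handle the regularizer increments $\bk_{\cdot,k}-\bk_{\cdot,k-1}\ge0$ in \eqref{eq:acc_psi_kp}, which can simply be dropped from below.

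The second step is to evaluate at $x=y_i^{k+1}$ and use the hypothesis \eqref{eq:psi_assum} for $\psi_i^k(y_i^k)$. This yields the lower bound $\tfrac{f(x_i^k)}{A_{k-1}}+\err_i^k-r_i^k$, plus the regularization terms in $\|y_i^{k+1}-y_i^k\|$, plus the new piece
\[
\tfrac{\alpha_k}{A_k}\bigl(f(x_i^{k+1})+\la\hg_i(x_i^{k+1}),y_i^{k+1}-x_i^{k+1}\ra\bigr),
\]
where I drop the nonnegative $\muavg$ term. Next I split $\hg_i=\nabla f+(\hg_i-\nabla f)$. The error part is exactly the $j=k$ summand of the first sum in $\err_i^{k+1}$.

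The third step is to lower bound $f(x_i^k)$ by convexity: $f(x_i^k)\ge f(x_i^{k+1})+\la\nabla f(x_i^{k+1}),x_i^k-x_i^{k+1}\ra$. Using $\tfrac{1}{A_{k-1}}=\tfrac{1-\alpha_k}{A_k}$, the function values combine to $\tfrac{f(x_i^{k+1})}{A_k}$. The gradient inner products combine as
\[
\tfrac{1}{A_k}\la\nabla f(x_i^{k+1}),(1-\alpha_k)x_i^k+\alpha_k y_i^k-x_i^{k+1}\ra+\tfrac{\alpha_k}{A_k}\la\nabla f(x_i^{k+1}),y_i^{k+1}-y_i^k\ra .
\]
By \eqref{eq:acc_vk}, the first argument equals $v_i^k-x_i^{k+1}=(\hv_i^k-x_i^{k+1})+(v_i^k-\hv_i^k)$. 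The $v_i^k-\hv_i^k$ part is the $j=k$ summand of the second sum in $\err_i^{k+1}$. Together with $\err_i^k$, this assembles $\err_i^{k+1}$, and the remaining terms are exactly those in the claim.

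The main obstacle is bookkeeping rather than depth. I need the right form of the strong-convexity modulus of $\psi_i^k$ (confirming $\lambda_k$ and that Lemma~\ref{lm:argmin} applies with center $\hv_i^0$). I also need the constant-matching in the split $\lambda_k$ into quadratic and cubic parts via $\|y_i^{k+1}-y_i^k\|\le2\bR$. I would check this split first, since the exact coefficient $3\lambda_k\bR^{-1}/24$ suggests using $\tfrac{\lambda_k}{4}\|d\|^2\ge\tfrac{\lambda_k}{8\bR}\|d\|^3$ with $d=y_i^{k+1}-y_i^k$.
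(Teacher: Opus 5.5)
Your proposal is correct and follows essentially the same route as the paper. Both arguments apply Lemma~\ref{lm:argmin} to $\psi_i^k$ at its minimizer $y_i^k$ and invoke the hypothesis \eqref{eq:psi_assum}. Both then lower-bound $f(x_i^k)$ by (strong) convexity at $x_i^{k+1}$, recombine via $1/A_{k-1}=(1-\alpha_k)/A_k$ and \eqref{eq:acc_vk}, absorb the two $j=k$ terms into $\err_i^{k+1}$, and split $\lambda_k$ using $\|y_i^{k+1}-y_i^k\|\le 2\bR$. The only difference is cosmetic: you drop the $\muavg$-quadratic terms and set $x=y_i^{k+1}$ early (which is also where the $2\bR$ bound becomes valid), whereas the paper carries them for general $x$ and discards them at the end.
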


\begin{proof}
    By~\eqref{eq:acc_psi1},~\eqref{eq:acc_psi_kp}
    \begin{equation*}
        \begin{aligned}
            \psi_{i}^{k}(x)
            =&~ f(x_i^1)
            + \tfrac{\bk_{2,{k-1}}}{2}\|x - \hv_i^0\|^2 + \tfrac{\bk_{3,{k-1}}}{6}\|x - \hv_i^0\|^3 \\
            &+ \tsum_{j=1}^{k-1}\tfrac{\alpha_j}{A_j}
            \left(
                f(x_i^{j+1}) + \la \hg_i(x_i^{j+1}), x - x_i^{j+1} \ra
            \right. \\
            & + \left. \tfrac{\muavg}{2}\|x - x_i^{j+1}\|^2 \right).
        \end{aligned}
    \end{equation*}
    Next, we apply Lemma~\ref{lm:argmin} to the function
    \begin{equation*}
        \begin{aligned}
            h(x) 
            =~ f(x_i^1) 
            + \tsum_{j=1}^{k-1}\tfrac{\alpha_j}{A_j} &
            \left(
                f(x_i^{j+1}) + \la \hg_i(x_i^{j+1}), x - x_i^{j+1} \ra
            \right. \\
            &~+ \left. \tfrac{\muavg}{2}\|x - x_i^{j+1}\|^2 \right).
        \end{aligned}
    \end{equation*}
    with $x^0 = \hv_i^0$, $p=2$, $\theta_2 = \bk_{2,k-1}$, $\theta_3 = \tfrac{\bk_{3,k-1}}{2}$, and $y = y_i^k$. Note, that $h(x)$ is strongly convex with parameter 
    \begin{equation*}
        \lambda_k = \tsum_{j=1}^{k-1} \tfrac{\alpha_j}{A_j} \muavg = \ls\tfrac{1}{A_{k-1}} - \tfrac{1}{A_0}\rs \muavg = \ls\tfrac{1}{A_{k-1}} -1\rs \muavg.
    \end{equation*}
    This yields
    \begin{equation*}
        \begin{aligned}
            \psi_i^k(x) 
            \ge &~ \psi_i^k(y_i^k) 
            + \tfrac{\bk_{2,k-1} + \lambda_k}{2}\|x - y_i^k\|^2 
            + \tfrac{\bk_{3,k-1}}{12}\|x - y_i^k\|^3 \\
            \stackrel{\eqref{eq:psi_assum}}{\ge} &~\tfrac{f(x_i^k)}{A_{k-1}} 
            + \tfrac{\bk_{2,k-1} + \lambda_k}{2}\|x - y_i^k\|^2 
            + \tfrac{\bk_{3,k-1}}{12}\|x - y_i^k\|^3 \\
            & + \err_i^k - r_i^k.
        \end{aligned}
    \end{equation*}
    Next, by definition of $\psi_i^{k+1}(x)$, the above inequality, and $\muavg$-strong convexity of $f$, we obtain 
    \begin{equation*}
        \begin{aligned}
            \psi_{i}^{k+1}(x)
            = &~ \psi_{i}^{k}(x)
            + \tfrac{\bk_{2,k} - \bk_{2,k-1}}{2}\|x - \hv_i^0\|^2 \\
            &+ \tfrac{\bk_{3,k} - \bk_{3, k-1}}{6}\|x - \hv_i^0\|^3 \\
            &+ \tfrac{\alpha_k}{A_k}
           \left(
                f(x_i^{k+1}) + \la \hg_i(x_i^{k+1}), x - x_i^{k+1} \ra
            \right. \\
            & \quad \quad \quad + \left. \tfrac{\muavg}{2}\|x - x_i^{k+1}\|^2 \right) \\
            \ge &~ \tfrac{f(x_i^k)}{A_{k-1}}  
            + \tfrac{\bk_{2,k-1} + \lambda_k}{2}\|x - y_i^k\|^2
            + \tfrac{\bk_{3,k-1}}{12}\|x - y_i^k\|^3 \\
            &  + \tfrac{\alpha_k}{A_k} \left(
                f(x_i^{k+1}) + \la \hg_i(x_i^{k+1}), x - x_i^{k+1} \ra
            \right. \\
            & \quad \quad \quad + \left. \tfrac{\muavg}{2}\|x - x_i^{k+1}\|^2 \right) + \err_i^k -  r_i^k\\
            \ge &~ \tfrac{1}{A_{k-1}} 
             \left( 
                f(x_i^{k+1}) 
                + 
                \la \nabla f(x_i^{k+1}), x_i^k - x_i^{k+1}\ra 
            \right. \\
            & \left. \quad \quad \quad + \tfrac{\muavg}{2}\|x_i^k - x_i^{k+1}\|^2 \right)\\
            & + \tfrac{\bk_{2,k-1} + \lambda_k}{2}\|x - y_i^k\|^2
            + \tfrac{\bk_{3,k-1}}{12}\|x - y_i^k\|^3 \\ 
            &+ \tfrac{\alpha_k}{A_k}
            \ls
                f(x_i^{k+1}) + \la \nabla f(x_i^{k+1}), x - x_i^{k+1} \ra
            \rs \\ 
            &+ \tfrac{\alpha_k}{A_k}
            \la 
                \hg_i(x_i^{k+1}) - \nabla f(x_i^{k+1}), x - x_i^{k+1} 
            \ra\\
            & + \tfrac{\alpha_k}{A_k} \tfrac{\muavg}{2}\|x - x_i^{k+1}\|^2 + \err_i^k  - r_i^k.
        \end{aligned}
    \end{equation*}  
    Next, we consider the sum of two linear models from the last inequality:
    \begin{equation*}
        \begin{aligned}
        \tfrac{1}{A_{k-1}}
        \left( 
            f(x_i^{k+1}) 
        \right. &   
        +\left.
            \la 
                \nabla f(x_i^{k+1}), x_i^k - x_i^{k+1}
            \ra 
        \rs \\
        &+ \tfrac{\alpha_k}{A_k}
        \ls
            f(x_i^{k+1}) + \la \nabla f(x_i^{k+1}), x - x_i^{k+1} \ra
        \rs \\
        \stackrel{\eqref{eq:As}}{=}~ & \tfrac{1 - \alpha_k}{A_{k}} f(x_i^{k+1}) 
        + \tfrac{1 - \alpha_k}{A_{k}}
        \la 
            \nabla f(x_i^{k+1}), x_i^k - x_i^{k+1}
        \ra \\
        & + \tfrac{\alpha_k}{A_{k}} f(x_i^{k+1}) 
        + \tfrac{\alpha_k}{A_{k}}
        \la 
            \nabla f(x_i^{k+1}), x - x_i^{k+1}
        \ra \\
        \stackrel{\eqref{eq:acc_vk}}{=}~ & \tfrac{f(x_i^{k+1})}{A_{k}} 
        + \tfrac{1 - \alpha_k}{A_{k}} 
        \la 
            \nabla f(x_i^{k+1}), 
            \tfrac{v_i^k - \alpha_k y_i^k}{1 - \alpha_k} - x_i^{k+1} 
        \ra \\
        & + \tfrac{\alpha_k}{A_{k}} 
        \la 
            \nabla f(x_i^{k +1}), x - x_i^{k+1} 
        \ra \\
        = & \tfrac{f(x_i^{k+1})}{A_{k}} 
        + \tfrac{1}{A_{k}}
        \la 
            \nabla f(x_i^{k+1}), v_i^k - x_i^{k+1} 
        \ra \\
        & + \tfrac{\alpha_k}{A_{k}}
        \la 
            \nabla f(x_i^{k+1}), x - y_i^k
        \ra  \\
        = & \tfrac{f(x_i^{k+1})}{A_{k}} 
        + \tfrac{1}{A_{k}}
        \la 
            \nabla f(x_i^{k+1}), \hv_i^k - x_i^{k+1} 
        \ra \\
        & + \tfrac{1}{A_{k}}
        \la 
            \nabla f(x_i^{k+1}), v_i^k - \hv_i^{k} 
        \ra \\
        & + \tfrac{\alpha_k}{A_{k}}
        \la 
            \nabla f(x_i^{k+1}), x - y_i^k
        \ra.
        \end{aligned}
    \end{equation*}
    Therefore, 
    \begin{equation*}
        \begin{aligned}
            \psi_{i}^{k+1}(x)
            \ge &~ \tfrac{f(x_i^{k+1})}{A_{k}} 
            + \tfrac{1}{A_{k}}
            \la 
                \nabla f(x_i^{k+1}), \hv_i^k - x_i^{k+1} 
            \ra \\
            &+ \tfrac{1}{A_{k}}
            \la 
                \nabla f(x_i^{k+1}), v_i^k - \hv_i^{k} 
            \ra \\
            &+ \tfrac{\alpha_k}{A_{k}}
            \la 
                \nabla f(x_i^{k+1}), x - y_i^k
            \ra  \\
            &+  \tfrac{\bk_{2,k-1} + \lambda_k}{2}\|x - y_i^k\|^2
            + \tfrac{\bk_{3,k-1}}{12}\|x - y_i^k\|^3  \\
            &+ \tfrac{\alpha_k}{A_k}
            \la 
                \hg_i(x_i^{k+1}) - \nabla f(x_i^{k+1}), x - x_i^{k+1} 
            \ra \\
            &+ \err_i^k  - r_i^k.
        \end{aligned}
    \end{equation*}
    Finally, by definition of $\err_i^{k+1}$~\eqref{eq:err}, we have
    \begin{equation*}
        \begin{aligned}
            \psi_i^{k+1}(y_i^{k+1}) \ge &~ \tfrac{f(x_i^{k+1})}{A_{k}}
            + \tfrac{1}{A_{k}}
            \la 
                \nabla f(x_i^{k+1}), \hv_i^k - x_i^{k+1} 
            \ra \\
            &+ \tfrac{\alpha_k}{A_{k}}
            \la 
                \nabla f(x_i^{k+1}), y_i^{k+1} - y_i^k
            \ra  \\
            &+  \tfrac{\bk_{2,k-1} + \lambda_k}{2}\|y_i^{k+1} - y_i^k\|^2 \\
            & + \tfrac{\bk_{3,k-1}}{12}\|y_i^{k+1} - y_i^k\|^3 \\
            & + \err_i^{k+1} - r_i^k\\
            \ge &~ \tfrac{f(x_i^{k+1})}{A_{k}}
            + \tfrac{1}{A_{k}}
            \la 
                \nabla f(x_i^{k+1}), \hv_i^k - x_i^{k+1} 
            \ra \\
            &+ \tfrac{\alpha_k}{A_{k}}
            \la 
                \nabla f(x_i^{k+1}), y_i^{k+1} - y_i^k
            \ra  \\
            &+  \tfrac{2\bk_{2,k-1} + \lambda_k}{4}\|y_i^{k+1} - y_i^k\|^2 \\
            &+  \tfrac{\lambda_k}{4}\tfrac{\|y_i^{k+1} - y_i^k\|}{2\bR}\|y_i^{k+1} - y_i^k\|^2  \\
            & + \tfrac{\bk_{3,k-1}}{12}\|y_i^{k+1} - y_i^k\|^3  + \err_i^{k+1} - r_i^k,
        \end{aligned}
    \end{equation*}
    where the last inequality holds due to triangle inequality $\|y_i^{k+1} - y_i^k\| \leq \|y_i^{k+1} - x^*\| + \|y_i^k - x^*\| \leq 2\bR$.
\end{proof}

\begin{lemma}[\cite{nesterov2008accelerating},~Lemma~2]
    \label{lem:dual}
    Let $g(z)=\tfrac{\theta}{p}\|z\|^{p}$ for $p \ge 2$ and $g^{*}$ be its conjugate function i.e., $g^{*}(v)=\sup _{z}\{\langle v, z\rangle-$ $g(z)\} .$ Then, we have
    $$
        g^{*}(v)=\tfrac{p-1}{p}\left(\tfrac{\|v\|^{p}}{\theta}\right)^{\frac{1}{p-1}}
    $$
    Moreover, for any $v, z \in \mathbb{R}^{n}$, we have $g(z)+g^{*}(v)-\langle z, v\rangle \ge 0 .$
\end{lemma}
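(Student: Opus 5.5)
The statement to prove is Lemma~\ref{lem:dual}: the conjugate of $g(z)=\tfrac{\theta}{p}\|z\|^p$ has a closed form, and the Fenchel--Young inequality holds. I would compute $g^*$ directly from its definition. The second claim then follows immediately, since by definition of the supremum $g^*(v)\ge\langle v,z\rangle-g(z)$ for every $z$.

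\textbf{Step 1: reduce to a scalar problem.} Fix $v$ and write $z=t u$ with $t\ge 0$ and $\|u\|=1$. Since $g(z)=\tfrac{\theta}{p}t^p$ depends only on $t$, I would first maximize the inner product over the direction. By Cauchy--Schwarz, $\langle v,tu\rangle\le t\|v\|$, with equality at $u=v/\|v\|$ when $v\neq 0$. Hence
\[
g^*(v)=\sup_{t\ge 0}\Bigl\{t\|v\|-\tfrac{\theta}{p}t^p\Bigr\}.
\]

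\textbf{Step 2: solve the scalar problem.} The function $\varphi(t)=t\|v\|-\tfrac{\theta}{p}t^p$ is concave on $[0,\infty)$ because $p\ge 2>1$. It satisfies $\varphi(0)=0$ and $\varphi(t)\to-\infty$ as $t\to\infty$, so its maximum is attained at the stationary point given by $\|v\|=\theta t^{p-1}$, i.e. $t^*=(\|v\|/\theta)^{1/(p-1)}$. Substituting, and using $\theta (t^*)^{p}=t^*\|v\|$, gives
\[
\varphi(t^*)=t^*\|v\|\bigl(1-\tfrac1p\bigr)=\tfrac{p-1}{p}\,\|v\|^{\frac{p}{p-1}}\theta^{-\frac{1}{p-1}}=\tfrac{p-1}{p}\Bigl(\tfrac{\|v\|^p}{\theta}\Bigr)^{\frac{1}{p-1}},
\]
which is the claimed formula. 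For $v=0$ the supremum is $0$, attained at $t=0$, and this agrees with the formula.

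The only step needing care is the exponent bookkeeping in Step 2, where $t^*\|v\|=\|v\|^{p/(p-1)}\theta^{-1/(p-1)}$ must be rewritten in the stated form. The rest of the argument is standard.
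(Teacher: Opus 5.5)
Your proof is correct. The paper does not prove this lemma; it imports it from \cite{nesterov2008accelerating}, so your computation serves as a self-contained replacement for the citation. Cauchy--Schwarz reduces the supremum to the radial variable $t=\|z\|$. The concave scalar function $t\|v\|-\tfrac{\theta}{p}t^p$ is maximized at $t^*=(\|v\|/\theta)^{1/(p-1)}$, and the identity $\theta (t^*)^p=t^*\|v\|$ gives the closed form. The Fenchel--Young inequality is, as you note, immediate from the definition of the supremum and does not need the formula.

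Two small points are worth making explicit. First, you silently use $\theta>0$, both for the concavity of $\varphi$ and for the formula to be meaningful. This holds in both places the paper applies the lemma, with $p=2$ and $p=3$ in the proof of Lemma~\ref{lem:estimating_sequence_final}: there $\theta$ is built from $\lambda_k$ and the estimating-sequence coefficients and is positive under the paper's parameter choices. Second, your argument only needs $p>1$, not $p\ge 2$. The Cauchy--Schwarz step is exactly where the Euclidean norm enters; for a general norm the same computation goes through with $\|v\|$ replaced by the dual norm of $v$.
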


\begin{lemma}[Two-sided estimating-sequence bound]
    \label{lem:estimating_sequence_final}
    Let Assumptions~\ref{as:L1},~\ref{as:L2}, and Assumption~\ref{as:strcnvxty} hold with $\mu_i \ge 0$. Let~\eqref{eq:acc_case2_params} hold and $\{\bk_{2,k}\}_{k \geq 1}$,~$\{\bk_{3,k}\}_{k \geq 1}$ be non-negative non-decreasing sequences such that for all $k \ge 1$ 
    \begin{equation}
        \label{eq:kappas}
        \begin{gathered}
             2\bk_{2, k-1} + \lambda_k \ge \tfrac{6}{\sqrt{5}} \tfrac{\alpha_k^2}{A_k}(2\Dtwo{\hv}{k}+\delta_{2,k}),\\
            2\bk_{3, k-1} + 3 \lambda_k \bR^{-1} \ge \tfrac{32}{5} \tfrac{\alpha_k^3}{A_k}(2\Lavg_2+L),
        \end{gathered}
    \end{equation}
    where $\|y_i^{k} - x^*\| \leq \bR~ \forall k \ge 1, ~ i \in [\numMach]$ and $\lambda_k = \ls\tfrac{1}{A_{k-1}} - 1 \rs \muavg$.
    Then, the following holds for each machine $i \in [\numMach]$ and for all iterations $k \ge 1$
    \begin{equation*}
        \begin{aligned}
            \tfrac{f(x_i^{k+1})}{A_{k}} \le~ & \psi_i^{k+1}(y_i^{k+1}) - \err_i^{k+1} + \berr_i^{k+1} \\
            \le~& \psi_i^{k+1}(x^*) - \err_i^{k+1} + \berr_i^{k+1} \\
            \le~ &   \tfrac{f(x^*)}{A_{k}}
            + \Done{\hv}{0}\|x_i^1-\hv_i^0\| + \Done{\hv}{0}\|x^*-\hv_i^0\| \\
            & + \tfrac{\Dtwo{\hv}{0}+\delta_{2,0}+\bk_{2,k}}{2}\|x^*-\hv_i^0\|^2 \\
            & + \tfrac{\Lavg_2+L+\bk_{3,k}}{6}\|x^*-\hv_i^0\|^3 \\
            & + \tsum_{j=1}^{k}\tfrac{\alpha_j}{A_j}
            \la \hg_i(x_i^{j+1})-\nabla f(x_i^{j+1}), x^*-y_i^{j+1}\ra \\
            & + \tsum_{j=1}^{k}\tfrac{1}{A_j}
            \la \nabla f(x_i^{j+1}), \hv_i^j -  v_i^j \ra + \berr_i^{k+1},
        \end{aligned}  
    \end{equation*}
    where
    \begin{equation}
        \label{eq:berr}
        \begin{aligned}
            \berr_i^{k+1} = \tsum_{j=1}^k \left( \tfrac{1}{A_{j}}   \right. &  \left.\|\hv_i^j - x_i^{j+1}\| + \tfrac{\alpha_j}{A_j} \|y_i^{j+1} - y_i^j\|\right)\\ 
            &\times
                \ls
                    \max\left\{
                        \tfrac{\delta_{2,j}}{\Dtwo{\hv}{j}},
                        \tfrac{L}{\Lavg_2}
                    \right\}
                    + 2
                \rs \Done{\hv}{j},
        \end{aligned}
    \end{equation}
    for $k \ge 1$ and $\berr_i^{1} = 0$.
\end{lemma}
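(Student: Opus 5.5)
The plan is induction on $k$, with Lemma~\ref{lem:acc_psi_lower} supplying the inductive step and Lemma~\ref{lem:acc_psi_upper} supplying the upper bounds. Define $r_i^k \eqdef \berr_i^k$. The middle inequality $\psi_i^{k+1}(y_i^{k+1})\le\psi_i^{k+1}(x^*)$ is immediate, because $y_i^{k+1}$ minimizes $\psi_i^{k+1}$. The last inequality comes from Lemma~\ref{lem:acc_psi_upper} applied at $x=x^*$ with index $k+1$. Adding $-\err_i^{k+1}$ to both sides and using the definition \eqref{eq:err} gives $\langle \hg_i-\nabla f, x^*-x_i^{j+1}\rangle - \langle \hg_i-\nabla f, y_i^{j+1}-x_i^{j+1}\rangle = \langle \hg_i-\nabla f, x^*-y_i^{j+1}\rangle$. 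The sign flip $-\langle\nabla f(x_i^{j+1}), v_i^j-\hv_i^j\rangle=\langle\nabla f(x_i^{j+1}),\hv_i^j-v_i^j\rangle$ then yields exactly the stated right-hand side.

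The substantive part is the first inequality, $\psi_i^{k}(y_i^{k})\ge f(x_i^{k})/A_{k-1}+\err_i^{k}-\berr_i^{k}$ for all $k\ge1$. For the base case $k=1$, note that $\err_i^1=\berr_i^1=0$ and $\psi_i^1(x)\ge f(x_i^1)$ by \eqref{eq:acc_psi1}, with $A_0=1$. For the inductive step, assume the bound at $k$; this is exactly \eqref{eq:psi_assum} with $r_i^k=\berr_i^k$. Lemma~\ref{lem:acc_psi_lower} then gives $\psi_i^{k+1}(y_i^{k+1})$ bounded below by
\[
f(x_i^{k+1})/A_k+\err_i^{k+1}-\berr_i^k+\Xi_i^k,
\]
where $\Xi_i^k$ collects three things: the term $\tfrac1{A_k}\langle\nabla f(x_i^{k+1}),\hv_i^k-x_i^{k+1}\rangle$, the quadratic and cubic terms in $\|y_i^{k+1}-y_i^k\|$, and the term $\tfrac{\alpha_k}{A_k}\langle\nabla f(x_i^{k+1}),y_i^{k+1}-y_i^k\rangle$. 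It therefore suffices to show $\Xi_i^k\ge -(\berr_i^{k+1}-\berr_i^k)$, i.e.\ $\Xi_i^k$ is at least minus the $j=k$ summand of \eqref{eq:berr}. I would split into the two cases of Lemma~\ref{lem:acc_step_progress}.

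\textbf{Case 1}, where \eqref{eq:acc_case1_cond} holds. Discard the nonnegative quadratic and cubic terms. Bound both inner products by Cauchy--Schwarz together with \eqref{eq:acc_case1_bound}. This gives $\Xi_i^k\ge -\bigl(\tfrac1{A_k}\|\hv_i^k-x_i^{k+1}\|+\tfrac{\alpha_k}{A_k}\|y_i^{k+1}-y_i^k\|\bigr)\bigl(\max\{\cdot\}+2\bigr)\Done{\hv}{k}$, which is precisely the new summand of $\berr$.

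\textbf{Case 2}, which is the main obstacle. Here we need $\Xi_i^k\ge0$. Write $g=\nabla f(x_i^{k+1})$ and $z=y_i^{k+1}-y_i^k$. The first inner product is lower bounded through \eqref{eq:acc_case2_bound} by $\tfrac{\sqrt5}{3A_k}\min\{a\|g\|^2, b\|g\|^{3/2}\}$, with $a=1/(2(2\Dtwo{\hv}{k}+\delta_{2,k}))$ and $b=(2\Lavg_2+L)^{-1/2}$. The negative part $\tfrac{\alpha_k}{A_k}\langle g,z\rangle$ must be absorbed by the regularizers in $z$. I would use Lemma~\ref{lem:dual} with $p=2$ in one subcase and $p=3$ in the other. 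Concretely, when the min is attained by the quadratic branch, apply Lemma~\ref{lem:dual} with $p=2$ and $\theta=\tfrac{2\bk_{2,k-1}+\lambda_k}{2}$ to obtain $\tfrac{\alpha_k}{A_k}\langle g,z\rangle+\tfrac\theta2\|z\|^2\ge-\tfrac{\alpha_k^2\|g\|^2}{2A_k^2\theta}$. The first inequality in \eqref{eq:kappas} is calibrated so that this loss is at most $\tfrac{\sqrt5}{3A_k}a\|g\|^2$. When the min is attained by the cubic branch, use $p=3$ with $\theta=\tfrac{2\bk_{3,k-1}+3\lambda_k\bR^{-1}}{8}$. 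This yields a loss of $\tfrac23\bigl(\tfrac{\alpha_k\|g\|}{A_k}\bigr)^{3/2}\theta^{-1/2}$, and the second inequality in \eqref{eq:kappas} (with the constant $32/5$) should make it at most $\tfrac{\sqrt5}{3A_k}b\|g\|^{3/2}$. The delicate part is the constant bookkeeping, in particular the factor $A_k^{-1/2}$ that appears in the cubic branch. I expect this to be handled because $A_k\le1$, so that $\alpha_k^3/A_k$ in \eqref{eq:kappas} dominates what is needed. Since the min selects the smaller branch, it suffices to dominate each branch separately. In Case~2 we conclude $\Xi_i^k\ge0\ge-(\berr_i^{k+1}-\berr_i^k)$, and the induction closes.
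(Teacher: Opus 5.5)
Your proposal is correct and follows the paper's proof essentially step for step. The shared steps are: induction with $r_i^k=\berr_i^k$ in Lemma~\ref{lem:acc_psi_lower}; the two cases of Lemma~\ref{lem:acc_step_progress}; Lemma~\ref{lem:dual} with $p=2$, $\theta=\tfrac{2\bk_{2,k-1}+\lambda_k}{2}$ and with $p=3$, $\theta=\tfrac{2\bk_{3,k-1}+3\lambda_k\bR^{-1}}{8}$; and finally Lemma~\ref{lem:acc_psi_upper} at $x=x^*$ combined with \eqref{eq:err}.

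The constant check you left open in the cubic branch is exact and does not need $A_k\le 1$. Requiring $\tfrac{2}{3}\bigl(\tfrac{\alpha_k\|g\|}{A_k}\bigr)^{3/2}\theta^{-1/2}\le\tfrac{\sqrt5}{3A_k}(2\Lavg_2+L)^{-1/2}\|g\|^{3/2}$ and squaring gives $8\theta\ge\tfrac{32}{5}\tfrac{\alpha_k^3}{A_k}(2\Lavg_2+L)$, which is precisely the second condition in \eqref{eq:kappas}.
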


\begin{proof}
    We aim to show by induction that the following holds for all machines $i \in [\numMach]$ and $k \ge 1$
    \begin{equation*}
        \psi_i^k(y_i^k) \ge \tfrac{f(x_i^k)}{A_{k-1}} + \err_i^k -\berr_i^{k},
    \end{equation*}
    where $\err_i^k$ is defined in~\eqref{eq:err}. From~\eqref{eq:acc_psi1}, since $A_0=1$, we have that $\frac{f(x_i^1)}{A_0} \le \psi_i^1(y_i^1)$.
    
    Let us assume that
    \begin{equation*}
        \psi_i^k(y_i^k) \ge \tfrac{f(x_i^k)}{A_{k-1}} + \err_i^k -\berr_i^{k},
    \end{equation*}
    and show that~$\psi_i^{k+1}(y_i^{k+1}) \ge \tfrac{f(x_i^{k+1})}{A_k} + \err_i^{k+1} -\berr_i^{k+1}$. By applying Lemma~\ref{lem:acc_psi_lower} with $r_i^k = \berr_i^k$,
    \begin{equation}
    \label{eq:show_pos}
        \begin{aligned}
            \psi_i^{k+1}(y_i^{k+1})
            \ge~
            & \tfrac{f(x_i^{k+1})}{A_{k}} + \tfrac{1}{A_{k}} \la \nabla f(x_i^{k+1}), \hv_i^k - x_i^{k+1} \ra \\
            & + \tfrac{2\bk_{2,k-1} + \lambda_k}{4}\|y_i^{k+1} - y_i^k\|^2 \\
            & + \tfrac{2\bk_{3,k-1} + 3\lambda_k \bR^{-1}}{24}\|y_i^{k+1} - y_i^k\|^3 \\
            & + \tfrac{\alpha_k}{A_k} \la \nabla f(x_i^{k+1}), y_i^{k+1} - y_i^k \ra \\
            & + \err_i^{k+1} - \berr_i^k
        \end{aligned}
    \end{equation}
    Thus, to complete the induction step, it remains to show that the sum of all terms on the right-hand side, except for $\tfrac{f(x_i^{k+1})}{A_k}$ and the error terms, is non-negative.

    We begin with the first case~\eqref{eq:acc_case1_cond} in Lemma~\ref{lem:acc_step_progress}. By~\eqref{eq:acc_case1_bound} we have
    \begin{equation*}
        \begin{aligned}
            \tfrac{1}{A_{k}} &\la \nabla f(x_i^{k+1}), \hv_i^k - x_i^{k+1} \ra + \tfrac{\alpha_k}{A_k} \la \nabla f(x_i^{k+1}), y_i^{k+1} - y_i^k \ra \\
            &\ge -  \ls \tfrac{1}{A_{k}}  \|\hv_i^k - x_i^{k+1}\| + \tfrac{\alpha_k}{A_k} \|y_i^{k+1} - y_i^k\|\rs \|\nabla f(x_i^{k+1})\| \\
            &\ge -  \ls \tfrac{1}{A_{k}}  \|\hv_i^k - x_i^{k+1}\| + \tfrac{\alpha_k}{A_k} \|y_i^{k+1} - y_i^k\|\rs \\
            & \qquad \times
             \ls
                \max\left\{
                    \tfrac{\delta_{2,k}}{\Dtwo{\hv}{k}},
                    \tfrac{L}{\Lavg_2}
                \right\}
                + 2
            \rs \Done{\hv}{k} 
        \end{aligned}
    \end{equation*}
    Therefore, by~\eqref{eq:berr} and~\eqref{eq:show_pos},
    \begin{equation*}
        \begin{aligned}
            \psi_i^{k+1}(y_i^{k+1})
            \ge~
            & \tfrac{f(x_i^{k+1})}{A_{k}} + \err_i^{k+1} - \berr_i^{k+1}
        \end{aligned}
    \end{equation*}
    Next, let us move to the second case in Lemma~\ref{lem:acc_step_progress}. When~\eqref{eq:acc_case2_cond} is satisfied Lemma~\ref{lem:acc_step_progress} provides a lower bound for 
    $\langle \nabla f(x_i^{k+1}), \hv_i^k - x_i^{k+1}\rangle$.
    Consider the case when the minimum on the right-hand side of~\eqref{eq:acc_case2_bound}
    is achieved by the first term.
    Then, applying Lemma~\ref{lem:dual} with the following choice of parameters,
    \begin{equation*}
        z = y_i^k - y_i^{k+1}, ~~ v = \tfrac{\alpha_k}{A_k}\nabla f(x_i^{k+1}), ~~ \theta = \tfrac{2\bk_{2, k-1} + \lambda_k}{2},
    \end{equation*}
    we have
    \begin{equation*}
        \begin{aligned}
            \tfrac{2\bk_{2,k-1} + \lambda_k}{4}  \|y_i^{k+1} - y_i^k\|^2 + &  \tfrac{\alpha_k}{A_k} \la \nabla f(x_i^{k+1}), y_i^{k+1} - y_i^k \ra \\
            & \ge - \tfrac{1     }{2\bk_{2,k-1} + \lambda_k}\|\tfrac{\alpha_k}{A_k} \nabla f(x_i^{k+1})\|^2.
        \end{aligned}
    \end{equation*}
    Hence,
    \begin{equation*}
        \begin{aligned}
            \psi_i^{k+1}(y_i^{k+1})
            \ge~
            & \tfrac{f(x_i^{k+1})}{A_{k}} + \tfrac{1}{A_{k}} \la \nabla f(x_i^{k+1}), \hv_i^k - x_i^{k+1} \ra \\
            & + \tfrac{2\bk_{2,k-1} + \lambda_k}{4}\|y_i^{k+1} - y_i^k\|^2 \\
            & + \tfrac{2\bk_{3,k-1} + 3\lambda_k \bR^{-1}}{24}\|y_i^{k+1} - y_i^k\|^3 \\
            & + \tfrac{\alpha_k}{A_k} \la \nabla f(x_i^{k+1}), y_i^{k+1} - y_i^k \ra + \err_i^{k+1} \\
            \ge~ 
            & \tfrac{f(x_i^{k+1})}{A_{k}} + \tfrac{\sqrt{5}}{3 A_k} \tfrac{\|\nabla f(x_i^{k+1})\|^2}{2(2\Dtwo{\hv}{k}+\delta_{2,k})} \\
            & - \tfrac{1}{2\bk_{2,k-1} + \lambda_k}\|\tfrac{\alpha_k}{A_k} \nabla f(x_i^{k+1})\|^2 + \err_i^{k+1} \\
            \ge~ 
            & \tfrac{f(x_i^{k+1})}{A_{k}} + \err_i^{k+1} - \berr_i^k. 
        \end{aligned}
    \end{equation*}
    where the last inequality holds by our choice of the parameter~$\bk_{2, k-1}$~\eqref{eq:kappas}
    \begin{equation*}
        2\bk_{2, k-1} + \lambda_k \ge \tfrac{6}{\sqrt{5}} \tfrac{\alpha_k^2}{A_k}(2\Dtwo{\hv}{k}+\delta_{2,k}).
    \end{equation*}

     Next, we consider the case when the minimum in the RHS of \eqref{eq:acc_case2_bound} is achieved on the second term. Again, by Lemma \ref{lem:dual} with the same choice of $z, v$ and with $\theta = \tfrac{2\bk_{3,k-1} + 3\lambda_k \bR^{-1}}{8}$, we have
    \begin{equation*}
        \begin{aligned}
            \tfrac{2\bk_{3,k-1} + 3\lambda_k \bR^{-1}}{24} & \|y_i^{k+1} - y_i^k\|^3 +   \tfrac{\alpha_k}{A_k} \la \nabla f(x_i^{k+1}), y_i^{k+1} - y_i^k \ra \\
            & \ge - \tfrac{4\sqrt{2}}{3\sqrt{2\bk_{3,k-1} + 3\lambda_k \bR^{-1}}}\|\tfrac{\alpha_k}{A_k} \nabla f(x_i^{k+1})\|^{3/2}.
        \end{aligned}
    \end{equation*}
    Hence, we get
        \begin{equation*}
        \begin{aligned}
            \psi_i^{k+1}(y_i^{k+1})
            \ge~
            & \tfrac{f(x_i^{k+1})}{A_{k}} + \tfrac{1}{A_{k}} \la \nabla f(x_i^{k+1}), \hv_i^k - x_i^{k+1} \ra \\
            & + \tfrac{2\bk_{2,k-1} + \lambda_k}{4}\|y_i^{k+1} - y_i^k\|^2 \\
            & + \tfrac{2\bk_{3,k-1} + 3 \lambda_k \bR^{-1}}{24}\|y_i^{k+1} - y_i^k\|^3 \\
            & + \tfrac{\alpha_k}{A_k} \la \nabla f(x_i^{k+1}), y_i^{k+1} - y_i^k \ra + \err_i^{k+1} \\
            \ge~ 
            & \tfrac{f(x_i^{k+1})}{A_{k}} 
            + \tfrac{\sqrt{5}}{3 A_k} \tfrac{\|\nabla f(x_i^{k+1})\|^{3/2}}{(2\Lavg_2+L)^{1/2}} \\
            & - \tfrac{4\sqrt{2}}{3\sqrt{2\bk_{3,k-1} + 3 \lambda_k \bR^{-1}}}\|\tfrac{\alpha_k}{A_k} \nabla f(x_i^{k+1})\|^{3/2} \\
            \ge~ 
            & \tfrac{f(x_i^{k+1})}{A_{k}} + \err_i^{k+1} - \berr_i^k. 
        \end{aligned}
    \end{equation*}
    where the last inequality holds by our choice of the parameter $\bk_{3, k-1}$~\eqref{eq:kappas}
    \begin{equation*}
        2\bk_{3, k-1} + 3 \lambda_k \bR^{-1} \ge \tfrac{32}{5} \tfrac{\alpha_k^3}{A_k}(2\Lavg_2+L).
    \end{equation*}

    To sum up, by our choice of the parameters $\bk_{i, k}$, $i=2, 3$ we obtain from~\eqref{eq:show_pos} that
    \begin{equation*}
        \begin{aligned}
        \psi_i^{k+1}(y_i^{k+1}) & \ge \tfrac{f(x_i^{k+1})}{A_k} + \err_i^{k+1} - \berr_i^{k}\\
        & \ge \tfrac{f(x_i^{k+1})}{A_k} + \err_i^{k+1} - \berr_i^{k+1}.
        \end{aligned}
    \end{equation*}

    Therefore, in both cases we obtained the bound
    \begin{equation*}
        \psi_i^{k+1}(y_i^{k+1}) \ge  \tfrac{f(x_i^{k+1})}{A_k} + \err_i^{k+1} - \berr_i^{k+1},
    \end{equation*}
    and, thus, by induction, we obtain that, for all $k \ge 1$,
    \begin{equation*}
        \begin{aligned}
            \tfrac{f(x_i^{k+1})}{A_{k}}&~ + \err_i^{k+1} -  \berr_i^{k+1} \le \psi_i^{k+1}(y_i^{k+1}) \le \psi_i^{k+1}(x^*) \\
            \stackrel{\eqref{eq:acc_psi_upper_bound}}{\le} &   \tfrac{f(x^*)}{A_{k}}
            + \Done{\hv}{0}\|x_i^1-\hv_i^0\| + \Done{\hv}{0}\|x^*-\hv_i^0\| \\
            & + \tfrac{\Dtwo{\hv}{0}+\delta_{2,0}+\bk_{2,k}}{2}\|x^*-\hv_i^0\|^2 \\
            & + \tfrac{\Lavg_2+L+\bk_{3,k}}{6}\|x^*-\hv_i^0\|^3 \\
            & + \tsum_{j=1}^{k}\tfrac{\alpha_j}{A_j}
            \la \hg_i(x_i^{j+1})-\nabla f(x_i^{j+1}),\, x^*-x_i^{j+1}\ra \\
        \end{aligned}  
    \end{equation*}
    Plugging in~\eqref{eq:err} and rearranging terms finishes the proof.  
\end{proof}

In the next two lemmas we provide bounds for terms $\la \nabla f(x_i^{j+1}), \hv_i^j -  v_i^j \ra$ and $\la \hg_i(x_i^{j+1})-\nabla f(x_i^{j+1}), x^*-y_i^{j+1}\ra$. First, let us introduce the following notation for the dissimilarity of $x_i^k$ on machines:
\begin{equation}
    \label{eq:Delta_raw}
    \Delta_{x, k}^{\mathrm{raw}} = \max_{i, j \in [\numMach]} \|x_i^k - x_j^k\|.
\end{equation}

\begin{lemma}[Bound for the consensus-on-$v$ error term]
\label{lem:err_v}
    Let Assumption~\ref{as:L1} hold.
    Then, for any $j \ge 1$
    \begin{equation*}
        \tfrac{1}{\numMach}\tsum_{i=1}^{\numMach}
        \la \nabla f(x_i^{j+1}), \hv_i^j - v_i^j \ra
        \le
        \tfrac{\Lavg_1 \Delta_{x,j+1}^{\mathrm{raw}}}{\numMach}
        \tsum_{i=1}^{\numMach}\|\hv_i^j - v_i^j\|.
    \end{equation*}  
\end{lemma}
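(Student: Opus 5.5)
The plan is to use the fact that consensus preserves the network average, which lets us subtract an arbitrary common vector from the gradients inside the inner products. Then the Lipschitz continuity of $\nabla f$ converts the remaining gradient differences into the disagreement $\Delta_{x,j+1}^{\mathrm{raw}}$.

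\textbf{Step 1: consensus preserves the average.} Let $V^j$ be the matrix with rows $(v_i^j)^\top$ and $\hat V^j$ the matrix with rows $(\hv_i^j)^\top$. By definition of the consensus routine, $\hat V^j = \mW^{k}\cdots\mW^{k-T+1} V^j$ for the appropriate indices. By double stochasticity in Assumption~\ref{assum:mixing_matrix_sequence}, $\mathbf{1}_\numMach^\top \mW^s = \mathbf{1}_\numMach^\top$ for every $s$. Hence $\mathbf{1}_\numMach^\top \hat V^j = \mathbf{1}_\numMach^\top V^j$, that is,
\[
\tsum_{i=1}^{\numMach} (\hv_i^j - v_i^j) = 0 .
\]

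\textbf{Step 2: subtract a common gradient.} Fix any node index $l\in[\numMach]$ and set $c \eqdef \nabla f(x_l^{j+1})$. By Step~1, $\tsum_i \la c, \hv_i^j - v_i^j\ra = 0$, so
\[
\tfrac{1}{\numMach}\tsum_{i=1}^{\numMach} \la \nabla f(x_i^{j+1}), \hv_i^j - v_i^j\ra = \tfrac{1}{\numMach}\tsum_{i=1}^{\numMach} \la \nabla f(x_i^{j+1}) - \nabla f(x_l^{j+1}), \hv_i^j - v_i^j\ra .
\]

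\textbf{Step 3: Cauchy--Schwarz and smoothness.} Apply Cauchy--Schwarz to each term. Then use that $\nabla f$ is $\Lavg_1$-Lipschitz, which was shown in Section~\ref{sec:setup} from Assumption~\ref{as:L1}. This gives
\[
\|\nabla f(x_i^{j+1}) - \nabla f(x_l^{j+1})\| \le \Lavg_1 \|x_i^{j+1}-x_l^{j+1}\| \le \Lavg_1 \Delta_{x,j+1}^{\mathrm{raw}},
\]
where the last inequality is the definition~\eqref{eq:Delta_raw}. Summing over $i$ yields exactly the claimed bound.

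The only potentially delicate point is Step~1, the exact preservation of the average by the (possibly time-varying) product of mixing matrices. It follows immediately from column stochasticity of each factor, so the argument is essentially routine.
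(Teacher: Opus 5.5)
Your proof is correct and follows the paper's argument: average preservation under the doubly stochastic mixing, subtraction of a common gradient, Cauchy--Schwarz, and the $\Lavg_1$-Lipschitz continuity of $\nabla f$. The only difference is cosmetic. You anchor at $\nabla f(x_l^{j+1})$ for a fixed node $l$ rather than at $\nabla f(\bx^{j+1})$, so you get $\|x_i^{j+1}-x_l^{j+1}\|\le\Delta_{x,j+1}^{\mathrm{raw}}$ directly. The paper instead needs the extra averaging step $\|x_i^{j+1}-\bx^{j+1}\|\le \tfrac{1}{\numMach}\sum_{\ell}\|x_i^{j+1}-x_\ell^{j+1}\|$.
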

\begin{proof}
    Since the consensus routine preserves averages by Definition~\ref{assum:mixing_matrix_sequence}, we have $\bv^k = \tfrac{1}{\numMach} \tsum_{i=1}^{\numMach} \hv_i^k = \tfrac{1}{\numMach} \tsum_{i=1}^{\numMach} v_i^k$. Therefore,
    \begin{equation*}
        \begin{aligned}
            \tfrac{1}{\numMach} \textstyle{\tsum_{i=1}^{\numMach}}& \la \nabla f(x_i^{j+1}), \hv_i^j -  v_i^j \ra \\
            &= \tfrac{1}{\numMach} \textstyle{\tsum_{i=1}^{\numMach}} \la \nabla f(x_i^{j+1}) - \nabla f(\bx^{j+1}), \hv_i^j -  v_i^j \ra \\
            & \le \tfrac{1}{\numMach} \textstyle{\tsum_{i=1}^{\numMach}} \|\nabla f(x_i^{j+1}) - \nabla f(\bx^{j+1})\| \|\hv_i^j -  v_i^j\| \\
            & \le \tfrac{1}{\numMach} \tsum_{i=1}^{\numMach} \Lavg_1 \|x_i^{j+1} - \bx^{j+1}\| \|\hv_i^j -  v_i^j\| \\
        \end{aligned}
    \end{equation*}
    By definition of $\bx^{j+1}$
    \begin{equation*}
        \begin{aligned}
            \|x_i^{j+1} - \bx^{j+1}\| 
            = &~ \|x_i^{j+1} - \tfrac{1}{\numMach} \tsum_{\ell=1}^{\numMach} x_\ell^{j+1}\| \\
            \le &~ \tfrac{1}{\numMach} \tsum_{\ell=1}^{\numMach} \|x_i^{j+1} - x_\ell^{j+1}\| \le \Delta_{x, j+1}^{\mathrm{raw}}.
        \end{aligned}
    \end{equation*}
    Hence, we have
    \begin{equation*}
        \begin{aligned}
            \tfrac{1}{\numMach} \textstyle{\tsum_{i=1}^{\numMach}}& \la \nabla f(x_i^{j+1}), \hv_i^j -  v_i^j \ra \\
            & \le \tfrac{1}{\numMach}  \Lavg_1 \Delta_{x, j+1}^{\mathrm{raw}} \tsum_{i=1}^{\numMach} \|\hv_i^j -  v_i^j\|.
        \end{aligned}
    \end{equation*}
\end{proof}

\begin{lemma}[Bound for the gradient-consensus error term]
\label{lem:err_g}
Let Assumption~\ref{as:L1} hold. For any $j \ge 1$
\begin{equation*}
    \begin{aligned}
        \tfrac{1}{\numMach}\tsum_{i=1}^{\numMach}
        &\la \hg_i(x_i^{j+1})-\nabla f(x_i^{j+1}), x^*-y_i^{j+1}\ra \\
        & \le~
        \left(\hDeg{x}{j+1}+\Lavg_1\Delta_{x,j+1}^{\mathrm{raw}}\right)
        \tfrac{1}{\numMach}\tsum_{i=1}^{\numMach}\|x^*-y_i^{j+1}\|.
    \end{aligned}
\end{equation*}
\end{lemma}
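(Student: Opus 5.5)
The plan is to bound each summand by Cauchy--Schwarz and then control the vector $\hg_i(x_i^{j+1})-\nabla f(x_i^{j+1})$ uniformly in $i$, mirroring the disagreement argument in the proof of Lemma~\ref{lem:inexact_taylor}. For every node $i$ we write
\begin{equation*}
    \la \hg_i(x_i^{j+1})-\nabla f(x_i^{j+1}), x^*-y_i^{j+1}\ra
    \le \|\hg_i(x_i^{j+1})-\nabla f(x_i^{j+1})\|\,\|x^*-y_i^{j+1}\|.
\end{equation*}
Once the first factor is bounded by $\hDeg{x}{j+1}+\Lavg_1\Delta_{x,j+1}^{\mathrm{raw}}$ for every $i$, averaging over $i\in[\numMach]$ gives the claim immediately.

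To bound the first factor, introduce $\bar g^{j+1}\eqdef\tfrac{1}{\numMach}\tsum_{\ell=1}^{\numMach}\nabla f_\ell(x_\ell^{j+1})$, which is the exact average that the consensus call in Algorithm~\ref{alg:acc_cubic} approximates. By the triangle inequality,
\begin{equation*}
    \|\hg_i(x_i^{j+1})-\nabla f(x_i^{j+1})\|
    \le \|\hg_i(x_i^{j+1})-\bar g^{j+1}\| + \|\bar g^{j+1}-\nabla f(x_i^{j+1})\|.
\end{equation*}
The first term is at most $\hDeg{x}{j+1}$ by the gradient consensus bound~\eqref{eq:cons_bounds_gradient}, applied to the inputs $\{\nabla f_\ell(x_\ell^{j+1})\}_{\ell}$. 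For the second term, we use $\nabla f(x_i^{j+1})=\tfrac1\numMach\tsum_\ell \nabla f_\ell(x_i^{j+1})$ to write
\begin{equation*}
    \bar g^{j+1}-\nabla f(x_i^{j+1})=\tfrac{1}{\numMach}\tsum_{\ell=1}^{\numMach}\bigl(\nabla f_\ell(x_\ell^{j+1})-\nabla f_\ell(x_i^{j+1})\bigr).
\end{equation*}
By Assumption~\ref{as:L1}, its norm is at most $\tfrac1\numMach\tsum_\ell L_{1,\ell}\|x_\ell^{j+1}-x_i^{j+1}\|\le \Lavg_1\Delta_{x,j+1}^{\mathrm{raw}}$, using the definition~\eqref{eq:Delta_raw} of $\Delta^{\mathrm{raw}}_{x,j+1}$.

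No step is a real obstacle. The only point needing care is to pair each local Lipschitz constant $L_{1,\ell}$ with the disagreement $\|x_\ell^{j+1}-x_i^{j+1}\|$ before bounding that disagreement by $\Delta^{\mathrm{raw}}$. Doing it in this order yields the average constant $\Lavg_1$ rather than $\Lmax_1$, as in~\eqref{eq:grad_disagreement}. The factor is $1$ instead of $2$ here because $\Delta^{\mathrm{raw}}$ already measures pairwise distances directly.
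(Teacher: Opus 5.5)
Your proposal is correct and follows the paper's proof essentially line for line. Like the paper, you apply Cauchy--Schwarz termwise, then insert the exact average $\tfrac{1}{\numMach}\tsum_{\ell}\nabla f_\ell(x_\ell^{j+1})$ and use the triangle inequality. The consensus term is bounded by $\hDeg{x}{j+1}$, and the disagreement term is bounded by $\Lavg_1\Delta_{x,j+1}^{\mathrm{raw}}$ via Assumption~\ref{as:L1}.
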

\begin{proof}
    \begin{equation*}
        \begin{aligned}
            \tfrac{1}{\numMach} \textstyle{\tsum_{i=1}^{\numMach}} & \la \hg_i(x_i^{j+1})-\nabla f(x_i^{j+1}), x^*-y_i^{j+1}\ra \\
            & \le \tfrac{1}{\numMach} \textstyle{\tsum_{i=1}^{\numMach}} \|\hg_i(x_i^{j+1})-\nabla f(x_i^{j+1})\| \|x^*-y_i^{j+1}\|
        \end{aligned}
    \end{equation*}
    Next, 
    \begin{equation*}
        \begin{aligned}
            \|\hg_i(x_i^{j+1})-~& \nabla f(x_i^{j+1})\| \le \|\hg_i(x_i^{j+1})- \tfrac{1}{\numMach} \tsum_{\ell=1}^{\numMach} \nabla f_\ell(x_\ell^{j+1})\| \\
            & +  \|\tfrac{1}{\numMach} \tsum_{\ell=1}^{\numMach} \nabla f_\ell(x_\ell^{j+1}) - \tfrac{1}{\numMach} \tsum_{\ell=1}^{\numMach} \nabla f_\ell(x_i^{j+1})\| \\
            & \le \hDeg{x}{j+1} + \Lavg_1 \Delta_{x, j+1}^{\mathrm{raw}} \\
        \end{aligned}
    \end{equation*}
    where the last inequality holds by definition of $\hDeg{x}{j+1}$~\eqref{eq:cons_bounds_derivatives}, Assumption~\ref{as:L1}, and~\eqref{eq:Delta_raw}. Substituting this bound into the first inequality proves Lemma.
\end{proof}

The last technical lemma of this section provides a bound for the raw disagreement $\Delta_{x, k+1}^{\mathrm{raw}}$~\eqref{eq:Delta_raw}.
\begin{lemma}[Progress of the cubic step and raw disagreement]
\label{lem:raw_dissimilarity}
Let Assumption~\ref{as:strcnvxty} hold with $\mu_i > 0$ for all $i \in [\numMach]$.
Then, 
\begin{align}
        \notag
        \Delta_{x,k+1}^{\mathrm{raw}}
        \le~ &
        \tfrac{1}{\mumin+\delta_{2,k}}
        (
            2\hDeH{\hv}{k}
            \max_{\ell\in[\numMach]}\|x_\ell^{k+1}-\hv_\ell^k\|
            + 2\hDeg{\hv}{k}
        ) \\
        \label{eq:raw_dissimilarity_bound}
        &+ 2\hDex{\hv}{k}.
\end{align}
\end{lemma}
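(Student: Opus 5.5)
The plan is to compare the first-order optimality conditions of the two local cubic steps~\eqref{eq:acc_step} at nodes $i$ and $j$, after translating one of them so that both cubic terms have a common center. The statement should then follow from strong monotonicity of the gradient of the local model $\hnu_i^k(\cdot;\hv_i^k)$.

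\textbf{Step 1 (strong convexity of the local model).} Each $\hH_i(\hv_i^k)$ is the output of consensus, i.e.\ of a product of doubly stochastic matrices with nonnegative entries, applied to the flattened local Hessians. Hence it is a convex combination $\tsum_\ell w_{i\ell}\nabla^2 f_\ell(\hv_\ell^k)$ with $w_{i\ell}\ge 0$ and $\tsum_\ell w_{i\ell}=1$. By Assumption~\ref{as:strcnvxty}, $\nabla^2 f_\ell\succeq \mu_\ell I\succeq \mumin I$, so $M_i^k \eqdef \hH_i(\hv_i^k)+\delta_{2,k}I\succeq(\mumin+\delta_{2,k})I$. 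The cubic term $\tfrac{L}{6}\|x-\hv_i^k\|^3$ is convex, so its gradient is monotone. Consequently the map
\[
G_i(x)\eqdef \nabla\hnu_i^k(x;\hv_i^k)=\hg_i(\hv_i^k)+M_i^k(x-\hv_i^k)+\tfrac{L}{2}\|x-\hv_i^k\|(x-\hv_i^k)
\]
is $(\mumin+\delta_{2,k})$-strongly monotone. In particular, $\|G_i(x)-G_i(x')\|\ge(\mumin+\delta_{2,k})\|x-x'\|$ for all $x,x'$.

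\textbf{Step 2 (translation trick).} Set $s_j^k=x_j^{k+1}-\hv_j^k$ and $t\eqdef \hv_i^k+s_j^k$. Then
\[
G_i(t)=\hg_i(\hv_i^k)+M_i^k s_j^k+\tfrac L2\|s_j^k\|s_j^k .
\]
The optimality condition at node $j$ reads $0=G_j(x_j^{k+1})=\hg_j(\hv_j^k)+M_j^k s_j^k+\tfrac L2\|s_j^k\|s_j^k$. Subtracting it, the cubic terms cancel exactly and so do the $\delta_{2,k}$ parts, leaving
\[
G_i(t)=\bigl(\hg_i(\hv_i^k)-\hg_j(\hv_j^k)\bigr)+\bigl(\hH_i(\hv_i^k)-\hH_j(\hv_j^k)\bigr)s_j^k .
\]
Both consensus outputs are within $\hDeg{\hv}{k}$, respectively $\hDeH{\hv}{k}$, of the same network averages by~\eqref{eq:cons_bounds_derivatives}. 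The triangle inequality therefore gives $\|G_i(t)\|\le 2\hDeg{\hv}{k}+2\hDeH{\hv}{k}\max_\ell\|x_\ell^{k+1}-\hv_\ell^k\|$.

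\textbf{Step 3 (conclusion).} Since $G_i(x_i^{k+1})=0$, Step 1 gives $(\mumin+\delta_{2,k})\|x_i^{k+1}-t\|\le\|G_i(t)\|$. Moreover,
\[
\|x_i^{k+1}-x_j^{k+1}\|\le \|x_i^{k+1}-t\|+\|t-x_j^{k+1}\|=\|x_i^{k+1}-t\|+\|\hv_i^k-\hv_j^k\|,
\]
and $\|\hv_i^k-\hv_j^k\|\le 2\hDex{\hv}{k}$ by~\eqref{eq:cons_bounds_x}. Combining these and maximizing over $i,j$ yields~\eqref{eq:raw_dissimilarity_bound}.

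The main obstacle is that the two cubic models are centered at different points $\hv_i^k\neq\hv_j^k$. The translation $t=\hv_i^k+s_j^k$ is exactly what makes the nonlinear cubic gradients cancel, so that only the consensus errors remain. The only other delicate point is justifying the lower bound $\hH_i\succeq\mumin I$, which needs the nonnegativity and stochasticity of the mixing weights rather than merely the accuracy bound $\hDeH{\hv}{k}$.
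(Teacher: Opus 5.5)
Your proof is correct and is essentially the paper's argument. The paper subtracts the two optimality conditions written in the step variables $s_i^k, s_\ell^k$, takes the inner product with $s_i^k-s_\ell^k$, and uses $M_i^k\succeq(\mumin+\delta_{2,k})I$ together with monotonicity of $s\mapsto\|s\|s$; this is exactly your strong monotonicity of $G_i$ applied at $x_i^{k+1}$ and $t=\hv_i^k+s_j^k$. Your explicit justification of $\hH_i(\hv_i^k)\succeq\mumin I$ via the nonnegativity and row-stochasticity of the consensus weights usefully spells out a step that the paper only attributes loosely to the mixing-matrix assumption.
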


\begin{proof}
By the optimality condition for the cubic subproblem~\eqref{eq:acc_step}, for any $i\in[\numMach]$,
\begin{equation*}
    0
    =
    \nabla \hphi_i^k(x_i^{k+1};\hv_i^k)
    + \delta_{2,k}(x_i^{k+1}-\hv_i^k)
    + \tfrac{L}{2}\|x_i^{k+1}-\hv_i^k\|(x_i^{k+1}-\hv_i^k).
\end{equation*}
Let $ M_i^k \eqdef \hH_i(\hv_i^k) + \delta_{2,k} I$ and $s_i^k \eqdef x_i^{k+1}-\hv_i^k$, then this can be rewritten as
\begin{equation}
    \label{eq:opt_cond_raw_sk}
    -\hg_i(\hv_i^k)
    =
    M_i^k s_i^k
    + \tfrac{L}{2}\|s_i^k\|s_i^k.
\end{equation}
Similarly, for any $\ell\in[\numMach]$,
\begin{equation}
    \label{eq:opt_cond_raw_sl}
    -\hg_\ell(\hv_\ell^k)
    =
    M_\ell^k s_\ell^k
    + \tfrac{L}{2}\|s_\ell^k\|s_\ell^k.
\end{equation}
Subtracting~\eqref{eq:opt_cond_raw_sl} from~\eqref{eq:opt_cond_raw_sk}, we obtain
\begin{equation*}
    \begin{aligned}
        - & \ls \hg_i(\hv_i^k) -\hg_\ell(\hv_\ell^k) \rs
        \\ 
        &=
        M_i^k(s_i^k-s_\ell^k)
        + (M_i^k-M_\ell^k)s_\ell^k
        + \tfrac{L}{2}\bigl(\|s_i^k\|s_i^k-\|s_\ell^k\|s_\ell^k\bigr).
    \end{aligned}
\end{equation*}
Taking the inner product with $s_i^k-s_\ell^k$ gives
\begin{equation}
    \label{eq:inner_product_raw}
    \begin{aligned}
        \left \langle M_i^k(s_i^k-s_\ell^k),  s_i^k - \right. & \left. s_\ell^k  \right \rangle
         + \la (M_i^k-M_\ell^k)s_\ell^k, s_i^k-s_\ell^k\ra \\
        &
        + \tfrac{L}{2}\la \|s_i^k\|s_i^k-\|s_\ell^k\|s_\ell^k, s_i^k-s_\ell^k\ra \\
        & =
        -\la \hg_i(\hv_i^k)-\hg_\ell(\hv_\ell^k), s_i^k-s_\ell^k\ra.
    \end{aligned}
\end{equation}

Since each $f_i$ is $\mu_i$-strongly convex, by Assumption~\eqref{assum:mixing_matrix_sequence} we have
\begin{equation*}
    \label{eq:Hk_pd}
    \hH_i (\hv_i^k) \succeq \min \limits_{i \in [\numMach]} \mu_i I \eqdef \mumin I,~ \forall i \in [\numMach],~k \ge 0.
\end{equation*}
Therefore,
\begin{equation}
    \label{eq:strong_monotone_A}
    \la M_i^k(s_i^k-s_\ell^k), s_i^k-s_\ell^k\ra
    \ge
    (\mumin+\delta_{2,k})\|s_i^k-s_\ell^k\|^2.
\end{equation}
Moreover, the function $s \mapsto \tfrac{1}{3}\|s\|^3$ is convex, and therefore(see e.g.~\cite[Theorem 2.1.3]{nesterov2018lectures})
\begin{equation}
    \label{eq:cubic_monotonicity}
    \la \|s_i^k\|s_i^k-\|s_\ell^k\|s_\ell^k, s_i^k-s_\ell^k\ra \ge 0.
\end{equation}
Combining~\eqref{eq:inner_product_raw}, \eqref{eq:strong_monotone_A}, and~\eqref{eq:cubic_monotonicity}, we get
\begin{equation}
    \label{eq:basic_raw_bound}
    \begin{aligned}
        (\mumin+\delta_{2,k})\|s_i^k-s_\ell^k\|^2
        \le~ &
        \|(M_i^k-M_\ell^k)s_\ell^k\|\,\|s_i^k-s_\ell^k\| \\
        & +
        \|\hg_i(\hv_i^k)-\hg_\ell(\hv_\ell^k)\|\,\|s_i^k-s_\ell^k\|.
    \end{aligned}
\end{equation}
 Assume $s_i^k \ne s_\ell^k$, dividing both sides of~\eqref{eq:basic_raw_bound} by $\|s_i^k-s_\ell^k\|$, we obtain
\begin{equation}
    \label{eq:divide_raw_bound}
    (\mumin+\delta_{2,k})\|s_i^k-s_\ell^k\|
    \le
    \|M_i^k-M_\ell^k\|\,\|s_\ell^k\|
    +
    \|\hg_i(\hv_i^k)-\hg_\ell(\hv_\ell^k)\|.
\end{equation}
Let $\bH_v^k \eqdef \tfrac{1}{\numMach}\tsum_{j=1}^{\numMach} \nabla^2 f_j(\hv_j^k)$. Since $M_i^k-M_\ell^k=\hH_i(\hv_i^k)-\hH_\ell(\hv_\ell^k)$, by~\eqref{eq:cons_bounds_derivatives} we have
\begin{equation}
    \label{eq:hessian_diff_bound}
    \begin{aligned}
        \|M_i^k-M_\ell^k\|
        =~
        & \|\hH_i(\hv_i^k)-\hH_\ell(\hv_\ell^k)\| \\
        \le~
        & \|\hH_i(\hv_i^k)-\bH_v^k\| + \|\hH_\ell(\hv_\ell^k)-\bH_v^k\| \\
        \le~ &
        2\hDeH{\hv}{k}.
    \end{aligned}
\end{equation}
Similarly,
\begin{equation}
    \label{eq:gradient_diff_bound}
    \|\hg_i(\hv_i^k)-\hg_\ell(\hv_\ell^k)\|
    \le
    2\hDeg{\hv}{k}.
\end{equation}
Substituting~\eqref{eq:hessian_diff_bound} and~\eqref{eq:gradient_diff_bound} into~\eqref{eq:divide_raw_bound} yields
\begin{equation}
    \label{eq:step_progress_si}
    \|s_i^k - s_\ell^k\|
    \le
    \tfrac{1}{\mumin+\delta_{2,k}}
    \left(
        2\hDeH{\hv}{k} \|s_\ell^k\|
        + 2\hDeg{\hv}{k}
    \right).
\end{equation}
If $s_i^k=s_\ell^k$, then~\eqref{eq:step_progress_si} is trivial.

Finally, observe that
\begin{equation*}
    x_i^{k+1}-x_\ell^{k+1}
    =
    (s_i^k-s_\ell^k)
    + (\hv_i^k-\hv_\ell^k).
\end{equation*}
Hence,
\begin{equation*}
    \|x_i^{k+1}-x_\ell^{k+1}\|
    \le
    \|s_i^k-s_\ell^k\|
    +
    \|\hv_i^k-\hv_\ell^k\|.
\end{equation*}
Using~\eqref{eq:step_progress_si} and~\eqref{eq:cons_bounds_x}, we get
\begin{equation*}
    \begin{aligned}
        \|x_i^{k+1}-x_\ell^{k+1}\|
        \le~
        & \tfrac{1}{\mumin+\delta_{2,k}}
        \left(
            2\hDeH{\hv}{k} \|s_\ell^k\|
            + 2\hDeg{\hv}{k}
        \right) \\
        & + \|\hv_i^k-\bv^k\|
        + \|\hv_\ell^k-\bv^k\| \\
        \le~
        & \tfrac{1}{\mumin+\delta_{2,k}}
        \left(
            2\hDeH{\hv}{k} \|x_\ell^{k+1}-\hv_\ell^k\|
            + 2\hDeg{\hv}{k}
        \right) \\
        & + 2\hDex{\hv}{k}.
    \end{aligned}
\end{equation*}
Taking the maximum over $i,\ell\in[\numMach]$ proves~\eqref{eq:raw_dissimilarity_bound}.
\end{proof}

Next, we finally combine all the results above to provide convergence guarantees.
\begin{theorem}
    \label{thm:acc_convergence}
    Let Assumption~\ref{as:strcnvxty},~\ref{as:L1},~\ref{as:L2}, and ~\ref{as:boundnes} hold. Define $\Dtwoo{\hv} \eqdef \max \limits_{0 \leq k \leq \numIter} \Dtwo{\hv}{k} $, where $\numIter$ is the total number of iterations, and let us choose $\forall k \geq 0$
    \begin{equation}
        \label{eq:acc_params}
        L = 3\Lavg_2, \qquad \delta_{2,k} = 3 \Dtwo{\hv}{k},
    \end{equation} 
    where $\Done{\hv}{k}$ and $\Dtwo{\hv}{k}$ are defined in~\eqref{eq:acc_Delta1_Delta2},
    \begin{equation}
        \label{eq:str_convex_alphas}
        \alpha_k = \alpha = \min \lb \tfrac{4}{5}, \ls \tfrac{\muavg}{30\sqrt{5}\Dtwoo{\hv}}\rs^{1/2}, \ls \tfrac{3\muavg\bR^{-1}}{160 \Lavg_2} \rs^{1/3}\rb,
    \end{equation}
    and
    \begin{equation}
        \label{eq:acc_kappas}
        \bk_{2, k} = \bk_2 = \tfrac{\muavg}{2} ,
        ~~~
        \bk_{3, k} = \bk_3 = \tfrac{3}{2}\muavg \bR^{-1}.
    \end{equation} 
    Then for all $k \geq 1$ we have
    \begin{equation}
        \label{eq:acc_thm_rate1}
        \begin{aligned}
            &f(\bx^{k+1})-f(x^*)
            \\
            &\le (1 - \alpha)^k 
            \left(\tfrac{4 \Done{\hv}{0}}{\muavg\|\bx^0 - x^*\|} + \tfrac{4\Done{\hv}{0}\bR}{\muavg\|\bx^0 - x^*\|^2}  \right.  \\
            & \quad  \quad  \quad  \quad  \quad  \quad + \tfrac{4\Dtwo{\hv}{0}}{\muavg} + \tfrac{1}{2}\\
            & \quad  \quad  \quad  \quad  \quad  \quad \left. +  \tfrac{8\Lavg_2 + 3\muavg \bR^{-1}}{6\muavg} \|\bx^0 - x^*\|  \right) \ls f(\bx^{0})-f(x^*) \rs \\
            & \quad + \tsum_{j=1}^{k}\alpha (1 - \alpha)^{k - j}
            \left(
                \hDeg{x}{j+1}
                + \tfrac{2\Lavg_1}{\mumin}
                \ls
                    2\hDeH{\hv}{j}\bR
                    + \hDeg{\hv}{j}
                \rs 
            \right.\\
            & \quad  \quad \quad  \quad  \quad \quad  \quad  \quad  \quad \quad \left.
                + 2\Lavg_1 \hDex{\hv}{j}
            \right) \bR \\
            & \quad + 2\tsum_{j=1}^{k} (1 - \alpha)^{k - j} \left( \tfrac{2\Lavg_1}{\mumin}
                \ls
                    2\hDeH{\hv}{j}  \bR
                    + \hDeg{\hv}{j}
                \rs 
            \right. \\
            & \quad \quad \quad \quad \quad \quad \quad \quad \quad  \quad + \left. 2\Lavg_1 \hDex{\hv}{j} \right) \bR. \\
            & \quad + 10\bR \tsum_{j=1}^{k} (1 + \alpha)(1 - \alpha)^{k - j}\Done{\hv}{j}.
        \end{aligned}
    \end{equation}
\end{theorem}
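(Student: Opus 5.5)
The plan is to start from the two-sided bound of Lemma~\ref{lem:estimating_sequence_final}, specialised to $x=x^*$, and average it over $i\in[\numMach]$. Jensen's inequality then gives $f(\bx^{k+1})\le \tfrac1\numMach\tsum_i f(x_i^{k+1})$. Multiplying through by $A_k=(1-\alpha)^k$ yields
\begin{equation*}
f(\bx^{k+1})-f(x^*)\le A_k\,\mathcal{I}_0+\tfrac1\numMach\tsum_i\Bigl(A_k\tsum_{j=1}^k\tfrac{\alpha}{A_j}\,e^{g}_{ij}+A_k\tsum_{j=1}^k\tfrac{1}{A_j}\,e^{v}_{ij}+A_k\berr_i^{k+1}\Bigr).
\end{equation*}
Here $\mathcal{I}_0$ collects the initial terms involving $\Done{\hv}{0}$, $\Dtwo{\hv}{0}$, $\delta_{2,0}$, $\bk_2$, $\bk_3$ and $\|x^*-\hv_i^0\|=R$. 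Since $A_k/A_j=(1-\alpha)^{k-j}$, this structure produces exactly the weights appearing in \eqref{eq:acc_thm_rate1}.

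First I verify the hypotheses. The choice $L=3\Lavg_2$, $\delta_{2,k}=3\Dtwo{\hv}{k}$ gives \eqref{eq:acc_case2_params} and \eqref{eq:acc_psi_upper_params}. Next comes \eqref{eq:kappas} with constant $\bk_2=\muavg/2$, $\bk_3=\tfrac32\muavg\bR^{-1}$, using $\lambda_k\ge0$. Note that $2\bk_2=\muavg$, while the right-hand side equals $\tfrac{6}{\sqrt5}\tfrac{\alpha^2}{A_k}\cdot5\Dtwo{\hv}{k}$. The factor $1/A_k$ grows, so the bare $\bk$ terms cannot suffice, and I will use $\lambda_k=(A_{k-1}^{-1}-1)\muavg$. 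This gives $2\bk_2+\lambda_k\ge \muavg/A_{k-1}\ge(1-\alpha)\muavg/A_k$ (dropping $-\muavg$ against $\muavg$), and similarly for the cubic condition with $3\lambda_k\bR^{-1}$. Then the $\alpha$-caps $\alpha^2\le\muavg/(30\sqrt5\Dtwoo{\hv})$ and $\alpha^3\le 3\muavg\bR^{-1}/(160\Lavg_2)$, together with $\alpha\le 4/5$, close \eqref{eq:kappas}. The constants $30\sqrt5$ and $160$ are chosen to match this bookkeeping.

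Next I bound $\mathcal{I}_0$ using strong convexity, $f(\bx^0)-f(x^*)\ge\tfrac{\muavg}{2}R^2$. Since $x_i^0=x^0$, we have $\hv_i^0=x^0$ and $\bx^0=x^0$. Each term is divided by $\tfrac{\muavg}{2}R^2$ to convert it into a multiple of $f(\bx^0)-f(x^*)$.
\begin{itemize}
\item The term $\Done{\hv}{0}R$ gives $4\Done{\hv}{0}/(\muavg R)$ after doubling for slack.
\item The term $\Done{\hv}{0}\|x_i^1-\hv_i^0\|\le 2\Done{\hv}{0}\bR$ gives the $\bR/R^2$ term.
\item The quadratic coefficient $(4\Dtwo{\hv}{0}+\muavg/2)/2$ gives $4\Dtwo{\hv}{0}/\muavg+\tfrac12$.
\item The cubic term $(4\Lavg_2+\tfrac32\muavg\bR^{-1})R^3/6$ gives the final $R$-term.
\end{itemize}
The $f(x^*)/A_k$ term cancels against $f(x^*)$ after multiplication.

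For the error sums I use Lemma~\ref{lem:err_g} and Lemma~\ref{lem:err_v}, with $\|x^*-y_i^{j+1}\|\le\bR$ and $\|\hv_i^j-v_i^j\|\le2\bR$. I insert Lemma~\ref{lem:raw_dissimilarity}, bounding $\max_\ell\|x_\ell^{j+1}-\hv_\ell^j\|\le2\bR$ and $\mumin+\delta_{2,j}\ge\mumin$. This yields $\Lavg_1\Delta^{\rm raw}_{x,j+1}\le\tfrac{2\Lavg_1}{\mumin}(2\hDeH{\hv}{j}\bR+\hDeg{\hv}{j})+2\Lavg_1\hDex{\hv}{j}$. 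The gradient term carries the factor $\alpha$, giving the second line of \eqref{eq:acc_thm_rate1}. The $v$-term has weight $1$ and factor $2\bR$, giving the third line. For $\berr$, I use $\max\{\delta_{2,j}/\Dtwo{\hv}{j},L/\Lavg_2\}+2=5$ and bound $\|\hv_i^j-x_i^{j+1}\|\le2\bR$ and $\|y_i^{j+1}-y_i^j\|\le2\bR$. This gives $10\bR(1+\alpha)(1-\alpha)^{k-j}\Done{\hv}{j}$.

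The main obstacle is the Lemma~\ref{lem:err_v} step. As stated, it is a cross-node average relying on $\tsum_i(\hv_i^j-v_i^j)=0$, but the $v$-error sits inside the per-node inequality. I therefore have to average over $i$ before bounding, which is legitimate because everything is linear in these inner products after Jensen. I also need care with the index shift $\Delta^{\rm raw}_{x,j+1}$ versus consensus at step $j$, and with the factor $2$ versus $1$ in the constants, accepting some slack in the $(1-\alpha)$ manipulations for the $\kappa$-conditions.
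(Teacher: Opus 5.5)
Your proposal is correct and follows the paper's proof essentially step for step. You average Lemma~\ref{lem:estimating_sequence_final} at $x^*$, apply Jensen, multiply by $A_k=(1-\alpha)^k$, convert the initial terms via $\|\bx^0-x^*\|^2\le 2(f(\bx^0)-f(x^*))/\muavg$, and control the error sums with Lemmas~\ref{lem:err_g}, \ref{lem:err_v}, \ref{lem:raw_dissimilarity} and the factor $5$ in $\berr$. Your explicit check of \eqref{eq:kappas}, via $\muavg+\lambda_k=\muavg/A_{k-1}=(1-\alpha)\muavg/A_k$ and $1-\alpha\ge\tfrac15$, is a useful addition that the paper leaves implicit.

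One small fix: Assumption~\ref{as:boundnes} only covers $k\ge1$, so bound $\|x_i^1-\hv_i^0\|\le\bR+R$ (as the paper does) rather than $2\bR$. The extra $\Done{\hv}{0}R$ is then absorbed by the slack you already reserved in the $4\Done{\hv}{0}/(\muavg R)$ term.
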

\begin{proof}
    By convexity of $f$,
    \begin{equation}
        \label{eq:acc_thm_jensen}
        f(\bx^{k+1})\le \tfrac{1}{\numMach}\tsum_{i=1}^{\numMach} f(x_i^{k+1}).
    \end{equation}
    First, let us average Lemma~\ref{lem:estimating_sequence_final} over $i\in[\numMach]$ and multiply the resulting inequality by $A_k$. We obtain
    \begin{equation*}
        \begin{aligned}
            &f(\bx^{k+1})-f(x^*)
            \\
            &\le A_k\tfrac{1}{\numMach}\tsum_{i=1}^{\numMach}
            \left[\Done{\hv}{0}\|x_i^1-\hv_i^0\|+\Done{\hv}{0}\|x^*-\hv_i^0\|\right]
            \\
            &\quad +A_k\tfrac{\Dtwo{\hv}{0}+\delta_{2,0}+\bk_{2,k}}{2\numMach}
            \tsum_{i=1}^{\numMach}\|x^*-\hv_i^0\|^2
            \\
            &\quad +A_k\tfrac{\Lavg_2+L+\bk_{3,k}}{6\numMach}
            \tsum_{i=1}^{\numMach}\|x^*-\hv_i^0\|^3 \\
            &\quad +
            A_k\tsum_{j=1}^{k}\tfrac{\alpha_j}{A_j}
            \tfrac{1}{\numMach}\tsum_{i=1}^{\numMach}
            \la \hg_i(x_i^{j+1})-\nabla f(x_i^{j+1}),x^*-y_i^{j+1} \ra \\
            &\quad + A_k\tsum_{j=1}^{k}\tfrac{1}{A_j}\tfrac{1}{\numMach}\tsum_{i=1}^{\numMach}
            \la \nabla f(x_i^{j+1}),\hv_i^j-v_i^j \ra + \tfrac{A_k}{\numMach} \tsum_{i=1}^{\numMach}\berr_i^{k+1}
        \end{aligned}
    \end{equation*}

    Now, lets bound each of the terms in above inequality. 
    By initialization $v_i^0=x_i^0$, hence $\hv_i^0=\bv^0=\bx^0$.
    First, by the definition of $\Done{\hv}{0}$ 
    \begin{equation*}
        \begin{aligned}
            A_k\tfrac{1}{\numMach}\tsum_{i=1}^   {\numMach}
            \left[\Done{\hv}{0}\right.&\left.\|x_i^1-\bx^0\| + \Done{\hv}{0}\|x^*-\bx^0\|\right]  \\
            &~\leq A_k\Done{\hv}{0} \ls 2\|x^*-\bx^0\| + \bR \rs. 
        \end{aligned}
    \end{equation*}
    Next, by our choice of $\delta_{2,0}$\eqref{eq:acc_params}
    \begin{equation*}
        \begin{aligned}
            A_k\tfrac{\Dtwo{\hv}{0}+\delta_{2,0}}{2\numMach}
            \tsum_{i=1}^{\numMach}\|x^*-\hv_i^0\|^2 
            &\leq 2A_k\Dtwo{\hv}{0}\|x^*-\bx^0\|^2. 
        \end{aligned}
    \end{equation*}
    Further, by our choice of $\delta_{2,k}$\eqref{eq:acc_params} and $\bk_{2, k}$\eqref{eq:acc_kappas},
    \begin{equation*}
        \begin{aligned}
            \tfrac{A_k \bk_{2,k}}{2\numMach} \tsum_{i=1}^{\numMach}&\|x^*-\hv_i^0\|^2 = \tfrac{A_k\bk_{2,k}}{2}\|x^*-\bx^0\|^2 =
            \tfrac{\muavg}{4}A_k\|x^*-\bx^0\|^2 
        \end{aligned}
    \end{equation*}
    For the cubic term, again using~\eqref{eq:acc_params},~\eqref{eq:acc_kappas},
    \begin{equation*}
        \begin{aligned}
            \tfrac{A_k}{6}(\Lavg_2+L&+\bk_{3,k})\|x^*-\bx^0\|^3
            \\
            &=
            A_k
            \ls
                \tfrac{2}{3}\Lavg_2
                + \tfrac{\muavg \bR^{-1}}{4}
            \rs \|x^*-\bx^0\|^3.
        \end{aligned}
    \end{equation*}
    It remains to bound the two accumulated error sums. First, by Lemma~\ref{lem:raw_dissimilarity}
    \begin{equation*}
        \begin{aligned}
            \Delta_{x,j+1}^{\mathrm{raw}}
            \le
            \tfrac{2}{\mumin+\delta_{2,j}}
            \ls
                2\hDeH{\hv}{j}\bR
                + \hDeg{\hv}{j}
            \rs
            + 2\hDex{\hv}{j}.
        \end{aligned}
    \end{equation*}
    By Lemma~\ref{lem:err_g}, for every $j\in\{1,\dots,k\}$,
    \begin{equation*}
        \begin{aligned}
            \tfrac{1}{\numMach} & \tsum_{i=1}^{\numMach}
            \la \hg_i(x_i^{j+1})-\nabla f(x_i^{j+1}), x^*-y_i^{j+1}\ra \\
            & \le
            \ls\hDeg{x}{j+1}+\Lavg_1\Delta_{x,j+1}^{\mathrm{raw}}\rs\bR\\
            & \le 
            \ls
                \hDeg{x}{j+1}
                + \tfrac{2\Lavg_1}{\mumin}
                \ls
                    2\hDeH{\hv}{j}\bR
                    + \hDeg{\hv}{j}
                \rs 
            + 2\Lavg_1 \hDex{\hv}{j}
            \rs \bR.
        \end{aligned}
    \end{equation*}
    Similarly, Lemma~\ref{lem:err_v} gives
    \begin{equation*}
        \begin{aligned}
            \tfrac{1}{\numMach}\tsum_{i=1}^{\numMach}&
            \la \nabla f(x_i^{j+1}), \hv_i^j-v_i^j\ra
            \le
            2\Lavg_1\Delta_{x,j+1}^{\mathrm{raw}}\bR \\
            &\le 2\ls \tfrac{2\Lavg_1}{\mumin}
                \ls
                    2\hDeH{\hv}{j}  \bR
                    + \hDeg{\hv}{j}
                \rs
            + 2\Lavg_1 \hDex{\hv}{j} \rs \bR.
        \end{aligned}
    \end{equation*}

    Finally, by~\eqref{eq:berr}, our choice of parameters~\eqref{eq:acc_params} and Assumption~\ref{as:boundnes}, we get
    \begin{equation*}
        \begin{aligned}
            \berr_i^{k+1}  = & \tsum_{j=1}^{k} \left( \tfrac{1}{A_{j}}   \right.   \left.\|\hv_i^j - x_i^{j+1}\| + \tfrac{\alpha_j}{A_j} \|y_i^{j+1} - y_i^j\|\right)\\ 
            &\qquad  \times
                \ls
                    \max\left\{
                        \tfrac{\delta_{2,j}}{\Dtwo{\hv}{j}},
                        \tfrac{L}{\Lavg_2}
                    \right\}
                    + 2
                \rs \Done{\hv}{j} \\
            \le & 10 \bR \tsum_{j=1}^{k} \tfrac{1 + \alpha_j}{A_{j}}\Done{\hv}{j}.
        \end{aligned}
    \end{equation*}
    Therefore, 
    \begin{equation*}
        \tfrac{1}{\numMach} \tsum_{i=1}^\numMach \berr_i^{k} \le 10 \bR \tsum_{j=1}^{k} \tfrac{1 + \alpha_j}{A_{j}}\Done{\hv}{j}.
    \end{equation*}
    Combining all the bounds together with definition of $A_k = (1 - \alpha)^k$~\eqref{eq:As} and $\|\bx^0 - x^*\|^2 \leq 2(f(\bx^0) - f(x^*))/\muavg$ we get~\eqref{eq:acc_thm_rate1}.
\end{proof}

\begin{customthm}{\ref{thm:acc_delta_choice}}
Let Assumptions~\ref{as:strcnvxty},~\ref{as:L1},~\ref{as:L2}, and Assumption~\ref{as:boundnes}
 hold. Let $\e>0$ be desired accuracy, $\numIter$ be total number of iterations, and 
\begin{equation*}
\alpha = \alpha_k = \min \lb
    \tfrac45,\,
    \ls\tfrac{3\muavg\bR^{-1}}{160\Lavg_2}\rs^{1/3}
\rb.
\end{equation*}
Let inexactness levels be chosen for every $k=0,\dots,\numIter$ as
\begin{subequations}
    \begin{align}
        \hDeH{\hv}{k} &~ \eqdef \hDe_{H \vert \hv} 
        \le
        \min \lb
            \tfrac{\muavg}{60\sqrt5 \alpha^2},
            \tfrac{\alpha\mumin\e}{320\Lavg_1\bR^2}
        \rb,
        \label{eq:acc_cor_DH}
        \\
        \hDeg{\hv}{k} &~ \eqdef \hDe_{g \vert \hv}
        \le
        \min \lb
            \tfrac{\alpha\mumin\e}{160\Lavg_1\bR},
            \tfrac{\alpha\e}{160\bR}
        \rb,
        \label{eq:acc_cor_Dg}
        \\
        \hDex{\hv}{k} &~ \eqdef \hDe_{\hv}
        \le
        \min \lb
            \tfrac{\muavg}{120\sqrt5 \alpha^2\Lavg_2},
            \tfrac{\alpha\e}{320\Lavg_1\bR}
        \rb,
        \label{eq:acc_cor_Dv}
        \\
        \hDeg{x}{k+1} &~ \eqdef \hDe_{g \vert x}
        \le
        \tfrac{\e}{8\bR}.
        \label{eq:acc_cor_Dgx}
    \end{align}
\end{subequations}
Let Algorithm~\ref{alg:acc_cubic} run with parameters chosen as for all $k=0,\dots,\numIter$,
\begin{equation*}
    L = 3\Lavg_2,
    \quad 
    \delta_{2,k}=3\Dtwo{\hv}{k},
    \quad 
    \bk_{2,k}=\tfrac{\muavg}{2},
    \quad
    \bk_{3,k}=\tfrac{3}{2}\muavg\bR^{-1}.
\end{equation*}
Then after $\numIter + 1$ iterations with 
\begin{equation}
    \label{eq:acc_cor_K}
    \numIter
    \ge
    \left\lceil
        \frac{\log\!\ls2C(f(\bx^0) - f(x^*))/\e\rs}{\log\ls1/(1-\alpha)\rs}
    \right\rceil,
\end{equation}
where
\begin{equation*}
    C
    \eqdef
    \tfrac{4\Done{\hv}{0}}{\muavg R}
    + \tfrac{4\Done{\hv}{0}\bR}{\muavg R^2}
    + \tfrac{4\Dtwo{\hv}{0}}{\muavg}
    + \tfrac{1}{2}
    + \tfrac{8\Lavg_2 + 3\muavg\bR^{-1}}{6\muavg}R,
\end{equation*}
the method outputs an $\e$-solution, i.e.,
\begin{equation*}
    f(\bx^{\numIter+1})-f(x^*) \le \e.
\end{equation*}
\end{customthm}
\begin{proof}
By~\eqref{eq:acc_cor_DH} and~\eqref{eq:acc_cor_Dv}, for every $k=1,\dots,\numIter$,
\begin{equation*}
    \Dtwo{\hv}{k}
    =
    \hDeH{\hv}{k} + 2\Lavg_2\hDex{\hv}{k}
    \le
    \tfrac{\muavg}{60\sqrt{5}\alpha^2}
    +
    2\Lavg_2
    \tfrac{\muavg}{120\sqrt{5}\alpha^2\Lavg_2}
    =
    \tfrac{\muavg}{30\sqrt{5}\alpha^2}.
\end{equation*}
Hence,
\begin{equation*}
\ls\tfrac{\muavg}{30\sqrt{5}\,\Dtwoo{\hv}}\rs^{1/2}\ge \alpha,
\end{equation*}
and therefore Theorem~\ref{thm:acc_convergence} applies with the constant choice
$\alpha_k\equiv \alpha$.

Using~\eqref{eq:acc_cor_DH}--\eqref{eq:acc_cor_Dv}, we obtain for every $j=1,\dots,\numIter$,
\begin{equation*}
    \tfrac{4\Lavg_1\bR^2}{\mumin}\hDeH{\hv}{j}
    \le
    \tfrac{\alpha\e}{80},
    \quad
    \tfrac{2\Lavg_1}{\mumin}\hDeg{\hv}{j}\bR
    \le
    \tfrac{\alpha\e}{80},
    \quad
    2\Lavg_1\hDex{\hv}{j}\bR
    \le
    \tfrac{\alpha\e}{160},
\end{equation*}
hence
\begin{equation*}
\ls
\tfrac{2\Lavg_1}{\mumin}
\ls
2\hDeH{\hv}{j}\bR + \hDeg{\hv}{j}
\rs
+
2\Lavg_1\hDex{\hv}{j}
\rs\bR
 \le \tfrac{\alpha\e}{32}.
\end{equation*}

Similarly, by~\eqref{eq:acc_cor_Dg} and~\eqref{eq:acc_cor_Dv},
\begin{equation*}
\Done{\hv}{j}
=
\hDeg{\hv}{j} + 2\Lavg_1\hDex{\hv}{j}
\le
\tfrac{\alpha\e}{160\bR}
+
\tfrac{\alpha\e}{160\bR}
=
\tfrac{\alpha\e}{80\bR}.
\end{equation*}

Now apply Theorem~\ref{thm:acc_convergence}. By~\eqref{eq:acc_cor_K}, the geometric term is bounded by
\begin{equation*}
(1-\alpha)^{\numIter}C (f(\bx^0) - f(x^*)) \le \tfrac{\e}{2}.
\end{equation*}

Further, using~\eqref{eq:acc_cor_Dgx} and
\begin{equation*}
    \alpha\tsum_{j=1}^{\numIter}(1-\alpha)^{\numIter-j}\le 1,
\end{equation*}
we get
\begin{equation*}
    \tsum_{j=1}^{\numIter}
    \alpha(1-\alpha)^{\numIter-j}
    \hDeg{x}{j+1}\bR
    \le
    \tfrac{\e}{8}.
\end{equation*}
Next, we use $\tsum_{j=1}^{\numIter}(1-\alpha)^{\numIter-j}\le \tfrac{1}{\alpha}$, and obtain
\begin{equation*}
    \begin{aligned}
        \tsum_{j=1}^{\numIter}
        \ls
        \alpha(1-\alpha)^{\numIter-j}
        +
        2(1-\alpha)^{\numIter-j}
        \rs & \\
        \times \left(
        \tfrac{2\Lavg_1}{\mumin}
        \ls
        2\hDeH{\hv}{j}\bR + \hDeg{\hv}{j}
        \rs
        \right. & \left.
        +
        2\Lavg_1\hDex{\hv}{j}
        \right) \bR
        \\ 
        & \quad \le
        \tfrac{\alpha+2}{32}\e
        \le
        \tfrac{3\e}{32}.
    \end{aligned}
\end{equation*}

Finally,
\begin{equation*}
    \begin{aligned}
        10\bR\tsum_{j=1}^{\numIter}(1+\alpha)(1-&\alpha)^{\numIter-j}\Done{\hv}{j}
         \\
        & \le 10(1+\alpha)\tsum_{j=1}^{\numIter}(1-\alpha)^{\numIter-j}\tfrac{\alpha\e}{80} \\
        &\le 
        \tfrac{10(1+\alpha)}{80}\e 
        \le
        \tfrac{\e}{4},
    \end{aligned}
\end{equation*}
since $\alpha\le 1$.

Combining all bounds, we get
\begin{equation*}
f(\bx^{\numIter+1})-f(x^*)
\le
\tfrac{\e}{2}+\tfrac{\e}{8}+\tfrac{3\e}{32}+\tfrac{\e}{4}
=
\tfrac{31}{32}\e
< \e.
\end{equation*}
This proves the claim.
\end{proof}

\textbf{Proof of Lemma~\ref{lem:acc_comm_complexity}}
\begin{proof}
    The proof is similar to the proof of Lemma~\ref{lem:comm_complexity}. The core difference is that, by Assumption~\ref{as:boundnes}, all evaluated local sequences ($v_i^k$ and $x_i^{k+1}$) are bounded within a radius $\bR$ from the optimum $x^*$. 
    
    Plugging  $\hDe_{H \vert \hv}$, $\hDe_{g \vert \hv}$, $\hDe_{\hv}$ and $\hDe_{g \vert x}$ from \eqref{eq:acc_inacc_main} into the universal contraction bound~\eqref{eq:general_contraction} guarantees the communication complexities~\eqref{eq:acc_T_complexities}.
\end{proof}

\EOD

\end{document}